\documentclass{m2an}

\usepackage[utf8]{inputenc}
\usepackage{graphicx,graphics}

\usepackage{array}
\usepackage{appendix}
\usepackage{amsmath,amssymb}

\usepackage{epstopdf}

\usepackage{enumerate}
\usepackage{mathtools}
\usepackage{color,xcolor} 
\usepackage{mathrsfs,bbm} 

\usepackage{hyperref}
\usepackage{amsthm}

\usepackage{mathpazo}
\usepackage{dsfont}

\usepackage{epstopdf}

\usepackage{algorithm}
\usepackage{algorithmic}
\usepackage{enumerate}
\usepackage{mathtools}
\usepackage{xcolor} 

\usepackage{mathrsfs} 
\usepackage{commath}

\newcommand{\eps}{\varepsilon}

\renewcommand{\Re}{{\mathrm{Re}}\,}
\renewcommand{\Im}{{\mathrm{Im}}\,}

\definecolor{refblue}{rgb}{0,0,0.75} 
\definecolor{refblueb}{rgb}{0,0,1} 
\definecolor{refgreen}{rgb}{0,0.5,0} 
\definecolor{grey}{rgb}{0.7,0.7,0.7} 

\usepackage{hyperref}
\hypersetup{ 
	colorlinks   = true, 
	urlcolor     = refblue, 
	linkcolor    = refblueb, 
	citecolor   = refblue
}

\newtheorem{proposition}{Proposition}
\newtheorem{lemma}{Lemma}

\newtheorem{theorem}{Theorem}

\theoremstyle{custom1}
\newtheorem{definition}{Definition}[section]
\newtheorem{remark}[definition]{Remark}
\theoremstyle{custom2}

\begin{document}
\title[The MFE for waves propagating through time-modulated media]{The modulated Fourier expansion for waves propagating through time-modulated media}
%
\author{Jörg Nick}\address{Section of Mathematics, University of Geneva, Switzerland.}

%
%
\begin{abstract} Controlling waves by actively changing the material parameters of a medium  
enables the development of new acoustic and electrical devices. Modulating the material breaks classical properties like reciprocity and the conservation of energy, which complicates the mathematical analysis. Without a limiting amplitude principle, time-harmonic formulations are generally inapplicable. The present manuscript develops an alternative tool for the time-modulated acoustic wave equation, that is based on a modulated Fourier expansion (MFE). The solution is characterized by multiple smoothly varying coefficient functions, which solve a coupled system of evolutionary partial differential equations with temporally constant coefficients.
For small-amplitude fast-time modulations, this system of evolutionary partial differential equations is shown to possess a smoothly varying solution, which characterizes the exact solution up to a small defect.

Discretization of the derived coupled system yields integrators that are stable and accurate when larger time steps are used, compared to those schemes that are applied to the time-modulated acoustic wave equation directly. Numerical experiments illustrate the theoretical results and the use of the approach.

\end{abstract}
%
%
\subjclass{35L52,   	35B27, 	65M12, 65M15}
\keywords{Time-modulated metamaterials, Modulated Fourier expansion, Laplace domain techniques, Time stepping}
\maketitle

\section{Introduction}
The promise of an additional degree of freedom in the design of materials, namely the variation of the physical parameters in time, has lead to the exploration of new types of metamaterials. Time-varying materials allow the construction of devices that are not achievable by passive media, as waves propagation through time-modulated media are not bound by reciprocity and the conservation of energy. These properties are central in the development of non-reciprocal devices such as insulators or circulators \cite{S19}, or to achieve signal amplification \cite{A22}. Other applications include frequency conversion \cite{Y17} as the modulation of the material couples various monochromatic components of incoming and scattered waves to each other, which enables optical and acoustic devices that manipulate the spectrum of waves propagating through time-modulated media in new ways. Consequently, these novel materials have received much attention in recent years \cite{YGA22}. 

Acoustic waves that traverse time-varying media are modeled by the acoustic wave equation with time-varying coefficients, which generally reads
\begin{align}\label{eq:ac-intro}
	 \partial_t^2 u(x,t) - \nabla \cdot \mu(x,t/\eps)\nabla u(x,t) = f (x,t), \quad \text{in} \quad \Omega \times [0,T].
\end{align}
The formulation is completed by homogeneous Dirichlet boundary conditions, the final time is denoted by $T$ and $\Omega \subset \mathbb{R}^d$ is assumed to be a bounded Lipschitz domain of dimension $d\in\{1,2,3\}$.
Finally, the solution $u$ is assumed to initially vanish and is excited by the smooth source term $f$, which also vanishes initially. The modulation $\mu$ is assumed to be $2\pi$ periodic and has the form
\begin{align*}
\mu(x,t/\varepsilon) = \mu_0(x)+ \rho\mu_{\mathrm{per}}(x,t/\eps),
\end{align*}
where $\rho$ is an amplitude that is assumed to be small, since stability bounds for the evolution problem \eqref{eq:ac-intro} are, to the knowledge of the author, only known to be stable up to factors of the type $e^{C\rho t /\eps}$ for some constant $C>0$.

\subsection{Related work}
Wave equations with variable wave speeds with \emph{bounded oscillations} have long been studied, following key results e.g. in \cite{CD03},\cite{RS05}, \cite{T07} and \cite{HW09}. Fast oscillations in dissipative wave equations have been investigated in \cite{RY00} and, very recently, in \cite{GH25}. Mathematical analysis for oscillatory modulations, which cover e.g. small values of $\varepsilon$, are much less present in the literature. 
 Homogenization techniques, as investigated in \cite{FA24,TLAGC24}, have shown promising results in computations but are currently supported only by limited rigorous theory (see \cite{FA24}). Moreover, high order homogenization is necessary to observe interesting behavior, such as nonreciprocity \cite{ACH22}. In \cite{DV25}, homogenized equations for time-varying materials are derived. Rigorous mathematical error estimates in the literature are, to the knowledge of the author, either accompanied by very restrictive non-resonance conditions or omitted \cite{FA24}. In \cite{G21}, the effect of periodic and random time-modulations on waves is quantified and compared. Other avenues to stable integrators for evolution problems with highly oscillatory coefficient include two-scale formulations \cite{CCNM15}. A collection of approaches is found in the monograph \cite{WYW13}.

The problem formulation of the time-modulated acoustic wave equation can be
completed in different ways. Studying the long-time behavior of waves propagating through (time-modulated) metamaterials naturally motivates the study of temporal quasi-periodic boundary conditions. This idea corresponds to Floquet--Bloch based approaches and can be understood as studying the spectrum of the forward operator that propagates the solution by one full period (i.e. $u(\cdot,t) \rightarrow u(\cdot,t+\eps 2\pi)$).  Floquet--Bloch theory is an established theory to analyze ordinary differential equations and elliptic partial differential equations with periodic coefficients (see e.g. \cite{K93}) and has further been used to investigate the Schrödinger equation with a time-periodic Hamiltonian \cite{TK19}. For a large part, these results cannot be carried over to the time-modulated acoustic wave equation. It is currently unknown whether core building blocks of such an approach, such as a counterpart to the Floquet theorem for ordinary differential equations (see \cite{K93}) or a limiting amplitude principle, can be formulated for time-modulated acoustic media. When the system is discretized in space, the Floquet theorem for ordinary differential equations does hold, but drawing conclusions by approaching the spatially continuous system by a sequence of refinements is challenging \cite{NHA24}. An intuitive approach to overcome these difficulties is to neglect (or filter) high temporal oscillation in the problem formulation and focus on a range of frequencies, which is an approach also known as harmonic balancing or coupled harmonics. This approach is difficult to connect directly to the general initial value problem associated to the time-modulated acoustic wave equation, but has been shown to be an effective tool to gain insight into the properties of the time-modulated materials, see e.g. \cite{ACHR24,HD24}. The recent manuscript \cite{HR25} quantifies energy changes for waves propagating through time-dependent media with high contrast.

Studying the modulated acoustic wave equation as an initial value problem, relatively large times have to be simulated and many oscillations of $\mu$ have to be resolved in order to draw conclusions on the long-time behavior of the evolution problem \eqref{eq:ac-intro}, even when modulation has small amplitude. The present paper attempts to reconcile these different approaches, by deriving a modulated Fourier expansion for the solution of the initial-value problem, that characterizes the solution by a few slowly varying functions. The approach taken here is therefore closely related to prior derivations of modulated Fourier expansions, which were already used in \cite{K58} and further developed e.g. in \cite{MV76} and \cite{HL00}. Their modern name and notation was coined in \cite{HLW02}. The modulated Fourier expansion makes the ansatz that 
\begin{align}\label{eq:modul-Fourier}
u(x,t) = \sum_{k=-K}^K z^K_k(x,t) e^{i kt/\varepsilon} + R_K(x,t),
\end{align}
where the coefficient functions $z^K_k$ for $-K\le k\le K $ are slowly varying, in the sense that their temporal derivatives are bounded independent of $\varepsilon$ and the remainder term $R_K$ is small. These expansions have been observed to be effective tools for many evolution problems and their discretizations. Most relevant for the present topic is their use for oscillatory ordinary differential equations \cite{CHL03}, for nonlinear wave equations formally in \cite{CHL08} and for semilinear wave equations with slowly varying wave speeds \cite{GHL16}. A concise overview of modulated Fourier expansions and their applications is found in \cite{GHL18}. The treatment of the wave equation with slowly varying wave speeds conducted in \cite{GHL16} is the closest to the the present work, however its technique critically relies on the fact that the modulation is slowly varying, and cannot be carried over to the present case. The core difference between the treatment in \cite{GHL16} and the present manuscript becomes apparent in the construction of the coefficient functions $z^K$. Whereas the slowly varying time-modulated system in \cite{GHL16} relies on a single dynamical system that is complemented by a system of time-harmonic problems, this manuscript derives the coefficients $z^K$ as the solution of a coupled system of evolution problems. 
\subsection{Contributions of this manuscript}
This manuscript derives and analyses a linear time-invariant system whose slowly varying solution $z^K$ characterizes, up to a small remainder term, the exact solution of the time-modulated acoustic wave equation. The coefficients are slowly varying in the sense that their time derivatives are bounded independently of the oscillation period $\eps$, as long as $\rho t< \eps$, where $t$ denotes the time of interest. Under sufficient regularity of the right-hand side, the remainder term is shown to decay rapidly with respect to powers of factors that are explicit with respect to $\eps$ and $\rho$. Applying time stepping schemes to the coupled systems yields approximations of the coefficients $z^K$ (and thus of the solution $u$ of the time-modulated acoustic wave equation), that do not suffer step size restrictions with respect to $\eps$ and are stable for step sizes $\tau$ that are independent of the period of the modulation $\eps$, if the amplitude is sufficiently small. 

We note that the approximation of the time-modulated evolution problem by a coupled system with constant coefficients has other computational advantages: Most importantly, the spatial operators have to be assembled only a single time and effective preconditioners need to be computed only once during the simulation. Other computational techniques that are particularly effective (or only available) for linear evolution problems with temporally constant coefficients include exponential integrators and time-parallel techniques.


\subsection{Outline}
The next section gives a complete problem formulation, presents the modulated Fourier expansion approach and condenses the main results of the manuscripts. A coupled time-invariant dynamical system for the coefficient functions $z^K$ of the modulated Fourier expansion is derived. Section~\ref{sect:Lap} analyzes this linear time-invariant coupled system by deriving bounds on the Laplace transforms of those coefficient functions. In the subsequent Section~\ref{sect:error bounds}, the implications of these Laplace domain bounds on the time-dependent coefficient functions are formulated and proven. Section~\ref{sect:struct-pres} shows some implications on the time-modulated acoustic wave equation, as well as a conserved property of the coefficient functions of the modulated Fourier expansion. Finally, Sections~\ref{sect:cq}--\ref{sect:numerics} discretize the coupled dynamical system, show time convergence error bounds (based on the Laplace domain bounds of Section~\ref{sect:Lap}) and illustrate the results by numerical experiments. 

\section{Problem formulation, main results and derivation}
\subsection{Problem formulation}

We consider the following acoustic wave equation with time-modulation in the system:
\begin{alignat}{2}\label{eq:modul-ac}
\begin{aligned}
  	 \partial_t^2 u(x,t) - \nabla \cdot \mu(x,t/\varepsilon)\nabla u(x,t) &= f (x,t), \quad \text{in} \quad \Omega \times [0,T],
     \\ 
     u(x,t) &= 0 \  \quad \text{on} \quad \Gamma \times [0,T],
\end{aligned}
\end{alignat}
which is completed by vanishing initial conditions, i.e. $u(\cdot,0)=\partial_t u(\cdot,0) = f(\cdot,0) = 0$. This setting is the natural description of a medium at rest that is then excited by some external source $f$, which is regular in time and space and initially vanishes. This framework is convenient to work with and simplifies the presentation. Studying the effects of nontrivial initial conditions, or non-smooth data $f$, is an interesting avenue of future research but beyond the scope of the present paper.

\subsection{Main results}
We consider two variants of the the small-amplitude modulation $\mu$. As the first model problem, we study the simplest case of the modulation
\begin{align}\label{eq:ref-mu-cos}
	\mu(t/\varepsilon) = 1 + 2\rho \cos(t/\varepsilon)
    =1+\rho(e^{it/\eps}+e^{-it/\eps}), \quad\text{where} \quad \rho,\eps\ll 1 .
\end{align}
Of key interest is the dependence of both  $\varepsilon$ and $\rho$ on the solution $u$. 
 Inserting the modulated Fourier expansion into the modulated acoustic wave equation and comparing the coefficients of the exponentials $e^{i kt/\varepsilon}$ yields the coupled system
 \begin{align}\label{eq:MFE}
 (\partial_t+ ik/\varepsilon)^2z^K_k  - \Delta z^K_k-\rho\Delta (z^K_{k-1}+z^K_{k+1}) =  \delta_{k0}f,
 \end{align}
  for all integers $k$ in the range $-K\le k \le K$ with some bounded truncation $K<\infty$. At the outer indices $k\in \{-K,K\}$, we set the terms $z^K_{K+1}$ and $z^K_{-K-1}$ to zero. For the first indices, the system reads
 \begin{align*}
 \partial_t^2z_0^K - \Delta z^K_0-\rho\Delta (z^K_{-1}+z^K_{1})  &=  f,\\
 (\partial_t+ i/\varepsilon)^2z^K_1  - \Delta z^K_1-\rho\Delta (z^K_{0}+z^K_{2}) &=  0, \\
 (\partial_t+ 2i/\varepsilon)^2z_2  - \Delta z^K_2-\rho\Delta (z^K_{1}+z^K_{3})  &=  0, \\
 &\cdots ,
 \\
 (\partial_t+ Ki/\varepsilon)^2z^K_K  - \Delta z^K_K-\rho\Delta z^K_{K-1}  &=  0.
 \end{align*}
We complete the coupled system with homogeneous Dirichlet boundary conditions and enforce vanishing initial conditions for the cofficient functions $z_k^K$ (for $-K\le k\le K$) and their temporal derivative.
We then have the following result.
\begin{theorem}[Modulated Fourier expansion]\label{thm:mfe}
Consider the solution of the system \eqref{eq:modul-ac}, formulated on a smooth domain $\Omega$, with a right-hand side $f$ that is, together with its extension by zero for $t<0$, sufficiently regular for the following right-hand sides to be finite.
Further, let the final time satisfy $$ T < \dfrac{\eps}{4\rho}.$$ Under these assumptions, the solution of \eqref{eq:modul-ac} satisfies the expansion \eqref{eq:modul-Fourier} with coefficient functions $z^K(t)$ and a remainder term $R_K(t)$, that fulfill the following properties.   The coefficient functions  $z_k^K \, \colon \,[0,T]\rightarrow H_0^1(\Omega)$ for $|k|\le K$ defined by the system  are smooth and regular, namely for any integer-valued $r>0$, we have
\begin{align*}
    \sup_{t\in [0,T]} \| \partial_t^{r+1} z^K(t)\|  \le 4T\left(\int_0^T \| \partial_t^r f(t)\|^2 \mathrm d t\right)^{\tfrac{1}{2}}.
\end{align*}
Further let $f$ have, for all times $0\le t\le T$, support in an interior domain $\Omega_0$ such that its closure is still a subset $\overline{\Omega}_0\subset \Omega $. Under these assumptions, the remainder term admits the bound
\begin{align*}
\sup_{t\in[0,T]}\|R_K(t)\|+ \|\nabla R_K (t)\|
 &\le  C \rho^{K}\eps^{K+1}  
\left(\int_0^T\left\|\partial_t^{K+1}\Delta^{K+1}f(t)\right\|^2 \, \mathrm d t\right)^{\tfrac{1}{2}},
\end{align*}
where the constant $C$ only depends on $K$ and polynomially on the final time $T$. For both of these estimates to hold, the right-hand side $f$ and sufficiently many derivatives are required to vanish initially so that the respective integrands on the right-hand side are continuous when extended by zero for $t<0$.
\end{theorem}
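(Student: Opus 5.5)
The plan is to work entirely in the Laplace domain, as the outline announces for Section~\ref{sect:Lap}. With vanishing initial data, the Laplace transform $\hat z^K(s)$ of the coefficient vector solves, for $\Re s=\sigma>0$, the block tridiagonal system
\begin{align*}
(s+ik/\eps)^2\hat z^K_k-\Delta\hat z^K_k-\rho\Delta(\hat z^K_{k-1}+\hat z^K_{k+1})=\delta_{k0}\hat f,\qquad |k|\le K .
\end{align*}
I would write this as $(A(s)-\rho B)\hat z^K=\hat f e_0$. Here $A(s)$ is block diagonal with blocks $(s+ik/\eps)^2-\Delta$, and $B$ is the shift coupling built from $\Delta$. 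Each diagonal block is the Laplace-domain wave operator evaluated at the shifted frequency $s_k=s+ik/\eps$, and $\Re s_k=\sigma$. The standard energy argument, testing with $\overline{s_k}\hat z_k$ and taking real parts, then gives the bound $\sigma\|\nabla \hat z_k\|^2+\sigma|s_k|^2\|\hat z_k\|^2\le |s_k|\,\|g\|\,\|\hat z_k\|$. The coupling term $\rho\Delta \hat z_{k\pm1}$ is also absorbed in this energy identity. After testing, it contributes $\rho\,|s_k|\,\|\nabla\hat z_{k\pm1}\|\,\|\nabla \hat z_k\|$. Summing over $k$ with Cauchy--Schwarz, the coupling is dominated by the diagonal term provided $2\rho|s_k|\lesssim\sigma$. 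This condition fails for large $|s_k|$, so I would instead weight the energy with $|s_k|^{-1}$, i.e.\ test with $\hat z_k\,\overline{s_k}/|s_k|$. The coupling term then costs $2\rho\|\nabla\hat z\|^2$, while the diagonal term gives $\sigma|s_k|^{-1}\ldots$. I expect the absorption condition to reduce to $\sigma\gtrsim 2\rho\max_k|s_k|\sim 2\rho K/\eps$, or more carefully to $\sigma\ge 4\rho/\eps$ after exploiting the neighbour structure.

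Choosing $\sigma=1/T$, this condition is exactly where the hypothesis $T<\eps/(4\rho)$ enters. The outcome should be a uniform bound $\|\hat z^K(s)\|_{H^1}\le C|s|\sigma^{-1}\cdots\|\hat f(s)\|$ on the line $\Re s=1/T$. The time-domain regularity estimate then follows from Plancherel on that line, using $\partial_t^{r+1}z^K\leftrightarrow s^{r+1}\hat z^K$ and $s^r\hat f\leftrightarrow \partial_t^r f$, with the weight $e^{-2t/T}$ bounded on $[0,T]$ (the factor $e^{2}$ together with the $\sigma^{-1}$ factors produces the $4T$). A Sobolev-in-time embedding, or simply integrating one more derivative, converts the $L^2$ bound into the $\sup_t$ bound.

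For the remainder, I would insert the expansion into \eqref{eq:modul-ac}. Since $z^K$ solves \eqref{eq:MFE}, all modes $|k|\le K$ cancel, and $R_K$ solves the modulated equation with right-hand side $\rho\Delta(z^K_K e^{i(K+1)t/\eps}+z^K_{-K}e^{-i(K+1)t/\eps})$ and zero initial data. The stability of the modulated equation up to $e^{C\rho t/\eps}$, which is bounded since $T<\eps/(4\rho)$, gives $\|R_K\|_{H^1}$ in terms of that defect. The task is therefore to show $\|\Delta z^K_{\pm K}\|\lesssim \rho^{K-1}\eps^{K+1}\|\partial_t^{K+1}\Delta^{K+1}f\|$. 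I would obtain this by solving the tridiagonal system outward from $k=0$. For $|k|\ge1$ and $|s|\ll 1/\eps$, the operator $(s+ik/\eps)^2-\Delta$ is invertible with inverse of norm $\lesssim \eps^2/k^2$; precisely, $-k^2/\eps^2$ lies far from the spectrum of $\Delta$ relative to the weighting. Each step out therefore gains a factor $\rho\eps^2\Delta$, and iterating yields $\hat z_K\approx(\rho\eps^2\Delta)^K\cdots\hat f$. The interior support of $f$ and the smooth domain guarantee that $\Delta^{K+1}f$ is meaningful with the Dirichlet conditions, i.e.\ $f$ lies in the domain of high powers of the Dirichlet Laplacian.

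The main obstacle is this last step, because the resolvent gain $\eps^2/k^2$ is only valid for $|s|$ small compared with $1/\eps$. For large $|s|$, $s+ik/\eps$ can approach zero, and there I would instead trade the loss for powers of $s$, i.e.\ time derivatives of $f$. Concretely, I would use the crude bound $|s_k|^{-1}\sigma^{-1}$ together with $|s|\gtrsim 1/\eps$ to produce $\eps|s|$ factors. This is where the mixed $\eps^{K+1}\partial_t^{K+1}$ scaling, rather than $\eps^{2K}$, comes from. Balancing the two frequency regimes to obtain exactly $\rho^K\eps^{K+1}$ is the step I would need to get right.
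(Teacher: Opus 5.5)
Your overall architecture matches the paper's: Laplace transform of the coupled system, coercivity for $\Re s>4\rho/\eps$ with $\sigma=1/T$, Plancherel, and the defect equation for $R_K$ with source $\rho\Delta z^K_{\pm K}e^{\pm i(K+1)t/\eps}$ combined with Proposition~\ref{prop:energy-estimate}. However, the two steps that carry the proof are not established.

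\textbf{Coercivity uniformly in $\Im s$.} You start from the right test function $\overline{s_k}\,\hat z_k$, which is the paper's $D_K(s)z$. You then bound the coupling crudely by $\rho|s_k|\,\|\nabla\hat z_{k\pm1}\|\,\|\nabla\hat z_k\|$. Rescaling the test function by $|s_k|^{-1}$ divides the diagonal term $\sigma\|\nabla\hat z_k\|^2$ and the coupling by the same factor, so nothing is gained. You are still left with $\sigma\gtrsim 2\rho\max_k|s_k|$, and $\max_k|s_k|\ge|\Im s|$ is unbounded on the line $\Re s=\sigma$. Your ``$\sim 2\rho K/\eps$'' tacitly assumes $|s|\lesssim K/\eps$, and even then it would restrict $T$ in a $K$-dependent way. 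Plancherel needs a bound on the whole line.

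The missing observation is a cancellation in $\Re\sum_k(\overline s-ik/\eps)\langle z_k,\rho z_{k-1}+z_k+\rho z_{k+1}\rangle_1$:
\begin{itemize}
\item[(i)] Because the coupling is Hermitian, $\sum_k\langle z_k,z_{k-1}+z_{k+1}\rangle_1=2\Re\sum_k\langle z_k,z_{k+1}\rangle_1$ is real. Hence the $\overline s$-part contributes only $\Re s$ times this sum, which is at most $2\rho\,\Re s\,\|\nabla z\|^2$.
\item[(ii)] In the $-ik/\eps$-part, the diagonal terms are purely imaginary. The off-diagonal terms telescope to $\sum_k\Im\langle z_{k+1},z_k\rangle_1$ with unit coefficients, which is at most $\tfrac{\rho}{\eps}\|\nabla z\|^2$.
\end{itemize}
This gives Lemma~\ref{lem:coercive-K} for $\Re s>4\rho/\eps$, independently of $\Im s$ and $K$.

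\textbf{Decay of $\hat z^K_{\pm K}$.} The bound $\|((s+ik/\eps)^2-\Delta)^{-1}\|\lesssim\eps^2/k^2$ for $|s|\ll1/\eps$ does not hold. Here $(s+ik/\eps)^2\approx-k^2/\eps^2$, and for Dirichlet eigenvalues $\lambda$ of $-\Delta$ near $k^2/\eps^2$, the quantity $|(s+ik/\eps)^2+\lambda|$ can be as small as about $2\Re s\,|k|/\eps$. The correct uniform bound is $\|R(s_k)\|_{L^2\leftarrow L^2}\le 1/(|s_k|\Re s)$ from Lemma~\ref{lem:th-res-bounds}. It gains only one power of $\eps$ per step, at the price of a factor $1/\Re s\sim T$. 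This spatial near-resonance, not the large-$|s|$ regime, is why the rate is $(\rho\eps)^K$ rather than $\eps^{2K}$.

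The ``balancing of the two regimes'' that you leave open is done by the elementary inequality $1/|s+ik/\eps|\le 2\eps|s|/(|k|\Re s)$, valid for all $\Re s>0$. It follows from $|s|,|s_k|\ge\Re s$ and $\max(|s|,|s_k|)\ge|k|/(2\eps)$. The paper combines this with two further ingredients:
\begin{itemize}
\item[(a)] $\Delta$ commutes with the coupled resolvent, which gives $\|\Delta^\ell\hat z^K_k\|_1\le\frac{2}{\Re s}\|\Delta^\ell\hat f\|$ for all $k$. The interior support of $f$ supplies the needed boundary compatibility.
\item[(b)] An induction over $|k|$ controls the back-coupling through $\hat z^K_{k+1}$, which your ``solving outward from $k=0$'' ignores.
\end{itemize}
Together these give $\|\hat z^K_k\|_1\le C(\rho\eps)^{|k|}|s|^{|k|}(\Re s)^{-2|k|-1}\|\Delta^{|k|+1}\hat f\|$, which is Proposition~\ref{prop:decay-spatial-reg}. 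After Plancherel this yields $\|\Delta z^K_{\pm K}\|_{H^1_0(0,T;L^2(\Omega))}\lesssim(\rho\eps)^K T^{2K+1}\|\Delta^{K+1}f\|_{H^{K+1}_0(0,T;L^2(\Omega))}$, which is what the energy estimate for $R_K$ requires. Your target $\rho^{K-1}\eps^{K+1}$ then follows from $\rho/\eps<1/(4T)$.
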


Secondly, we consider the more general class of small-amplitude modulations $\mu(x,t)$ of the form
\begin{align}\label{eq:more-general-mu}
\mu(x,t/\varepsilon) = \mu_0(x)+ 2\rho
\sum_{j=0}^J \widehat \mu_j (x) \cos(jt/\varepsilon)
= \mu_0(x)+ \rho
\sum_{j=-J}^J \widehat \mu_j (x) e^{ijt/\varepsilon},
\end{align}
where the real-valued coefficients $\widehat{\mu}_j$ of the dynamic component are mirrored to negative indizes (i.e. we set $\widehat{\mu}_{-j} = \widehat{\mu}_j$) and are further assumed to be sufficiently regular (i.e. $\mu_j(x)\in C^1(\Omega)$).
Moreover, the static component of $\mu$, i.e. $\mu_0$, is assumed to be positive, bounded away from zero and bounded from above, in the sense that there exist constants $c_\mu$ and $C_\mu$, such that
\begin{align}\label{eq:mu-prop}
0<c_\mu < \mathrm{ess}\, \inf_{x\in \Omega} \mu_0(x)\le  \mathrm{ess}\, \sup_{x\in \Omega} \mu_0(x) < C_\mu <\infty.
\end{align}
Moreover, we assume that the spatial coefficients of the oscillating coefficients and their derivatives are continuously bounded, in the sense that $\widehat{\mu}_j\in C^1(\Omega)$ for all $-J\le j\le J$.
The coefficient functions $z^{JK}$ are again determined by an evolution problem, which generally reads: Find, for all $0\le t\le T$, the coefficient function 
\begin{align}
z_k^{JK}(t)\in V_K = H_0^1(\Omega)^{2JK+1},
\end{align}
such that for all integer-valued $k$ in the range $-JK\le k\le JK$ it holds that
\begin{align}\label{eq:general-mfe-sys}
\left(\partial_t+ \frac{ik}{\varepsilon} \right)^2 z^K_k -\nabla \cdot \mu_0 \nabla z^K_k - \sum_{j=-J}^J\nabla \cdot \widehat{\mu_j} \nabla z^K_{k-j} = \delta_{k0}f,
\end{align}
with the spatially varying operators
\begin{align*}
A_0(x) &= -\nabla \cdot \mu_0(x) \nabla
\quad \text{and}\quad
\widehat{A}_j(x) = - \nabla \cdot \widehat{\mu_j}(x) \nabla, \quad \text{for}\quad j=-J,\dots,J.
\end{align*}
As the exact solution is assumed to vanish initially, we complete the system by setting the the coefficients $z_k^K$ and their derivatives initially to zero. 
We then have the following result.
\begin{theorem}[Modulated Fourier expansion for low spatially variable $\mu$]\label{thm:mfe-low-reg}
Consider the time-modulated acoustic wave equation with the modulation \eqref{eq:more-general-mu} which fulfills the assumption \eqref{eq:mu-prop} and further let the assumptions of Theorem~\ref{thm:mfe} hold. Then, for sufficiently small $\rho$, the following bound holds for all $r\ge 0$
\begin{align*}
    \sup_{t\in [0,T]} \| \partial_t^{r+1} z^{JK}(t)\|  \le C\left(\int_0^T \| \partial_t^r f(t)\|^2 \mathrm d t\right)^{\tfrac{1}{2}},
\end{align*}
where the constant $C$ depends on the functions $\widehat{\mu}_j$ for $|j|\le J$, the final time $T$ and on $K$, but is independent of $\rho$ and $\eps$.
The remainder term is bounded by
\begin{align*}
\sup_{t\in[0,T]}\|R_{JK}(t)\|+ \|\nabla R_{JK} (t)\|
 &\le  C \dfrac{\rho^{K}}{\eps^{K}}  
\left(\int_0^T\left\|\partial_t^{K+1}f(t)\right\|^2 \, \mathrm d t\right)^{\tfrac{1}{2}},
\end{align*}
where the constant $C$ only depends on $K$, polynomially on the final time $T$ and the modulation $\mu$ through its coefficients $\widehat{\mu}_j$ for $|j|\le J$, but is independent of $\rho$ and $\eps$. For both of these estimates to hold, the right-hand side $f$ and sufficiently many derivatives are required to vanish initially so that the respective integrands on the right-hand side are continuous when extended by zero for $t<0$.
\end{theorem}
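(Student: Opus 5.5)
We prove Theorem~\ref{thm:mfe-low-reg} the same way as Theorem~\ref{thm:mfe}: analyse the coupled system \eqref{eq:general-mfe-sys} in the Laplace domain, transfer the bounds to the time domain with a Paley--Wiener/Plancherel argument, and then bound the defect that the truncation leaves in \eqref{eq:modul-ac}.

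\emph{Step 1: Laplace-domain bounds for the coupled system.} Take the Laplace transform with $\Re s=\sigma=1/T$. Row $k$ becomes the Helmholtz-type operator $(s+ik/\eps)^2+A_0$. Its $H^{-1}\to H_0^1$ inverse is bounded by $C|s+ik/\eps|/\sigma$, using the standard energy argument: test with $\overline{(s+ik/\eps)}\,\widehat z_k$ and take real parts, which uses $\Re(s+ik/\eps)=\sigma$ and \eqref{eq:mu-prop}. In the combined energy norm $\||s_k|\widehat z_k\|+\|\nabla \widehat z_k\|$, with $s_k=s+ik/\eps$, the coupling terms $\rho\widehat A_j$ are bounded by $\rho\max_j\|\widehat\mu_j\|_{C^1}\,\|\nabla\cdot\|$. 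The obstruction is that the energy estimate carries the factor $|s_k|/\sigma$. For $|k|\ge1$ this is about $|k|/(\eps\sigma)$, so the coupling contributes at most $C\rho\,\sigma^{-1}|s_k|/\ldots$. A Neumann series, or equivalently a block-coercivity argument, for the full $(2JK+1)$-block operator then converges if $C\rho/(\eps\sigma)<1$, that is $C\rho T<\eps$. This explains the ``sufficiently small $\rho$'' assumption, which is compatible with $T<\eps/(4\rho)$ up to the constant depending on $\widehat\mu_j$. The result is $\|\widehat z(s)\|_{V_K}\le C|s|\sigma^{-1}\|\widehat f(s)\|$, with $C$ independent of $\rho,\eps$. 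Unlike the constant-coefficient case, $\widehat A_j$ does not commute with $A_0$. We therefore never diagonalise; we only use the bound $\|\widehat A_j v\|_{H^{-1}}\le \|\widehat\mu_j\|_{L^\infty}\|\nabla v\|$.

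\emph{Step 2: regularity of the coefficients.} The system has time-independent coefficients, so $\partial_t^r z$ solves the same system with data $\partial_t^r f$, which is admissible because the derivatives vanish initially. Plancherel on the line $\Re s=\sigma$, applied to $e^{-\sigma t}\partial_t^{r+1}z$, together with the Step 1 bound (the factor $|s|$ is absorbed by the extra time derivative), gives $L^2$-in-time bounds. To obtain the $\sup_t$ bound, we bound $\partial_t^{r+2}z$ in $L^2$ in the same way and use $\|g(t)\|\le\int_0^t\|g'\|$ for the vanishing initial data. Finally, $e^{\sigma T}=e$ is absorbed into $C$.

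\emph{Step 3: the remainder.} Set $u_K=\sum_k z_k e^{ikt/\eps}$. Inserting it into \eqref{eq:modul-ac} cancels all terms except those in which the coupling pushes an index outside $|k|\le JK$. The defect is
\[
d=\rho\sum_{|k|\le JK}\ \sum_{|k-\ell|\le J,\ |\ell|>JK}\widehat A_{\ell-k} z_k\, e^{i\ell t/\eps}.
\]
Hence $R=u-u_K$ solves the modulated equation with right-hand side $-d$. The defect only involves the outermost indices $|k|>J(K-1)$. We show these are small by iterating the block structure: since $f$ enters only at $k=0$, the components satisfy, in the Laplace domain,
\[
\widehat z_k=-\big(s_k^2+A_0\big)^{-1}\rho\sum_j\widehat A_j\widehat z_{k-j}.
\]
Each step away from the centre costs a factor $C\rho|s_k|/\sigma\cdot\|\widehat A_j\|\lesssim C\rho/\eps$ times powers of $|s|$. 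We do not use the extra $\eps$ gain of Theorem~\ref{thm:mfe}, which needs $\Delta$ to commute and the interior support to trade derivatives of $f$. After $K$ steps this gives $\|\nabla\widehat z_k\|\le C(\rho/\eps)^K|s|^{K}\sigma^{-K-1}\|\widehat f\|$ for the outermost components. The frequency factors become time derivatives, giving $\partial_t^{K+1}f$.

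\emph{Step 4: stability of the original equation.} The main obstacle is stability of \eqref{eq:modul-ac} itself, since it has no energy conservation. We use the standard Gronwall energy estimate: the energy identity gives growth at rate $C\rho\|\partial_t\mu_{\mathrm{per}}\|/\eps$, hence a factor $e^{C\rho T/\eps}$, which is bounded by a constant because $\rho T\lesssim\eps$. This transfers the bound on $d$ in $L^1(0,T;L^2)$ to the bound on $R$ in the energy norm, giving $\|R\|+\|\nabla R\|$. One delicate point remains: $d$ involves $\widehat A_j z_k$, which lies only in $H^{-1}$. We handle it by integrating by parts in time, so that $d\in H^{-1}$ is paired with $\partial_t R$ through an auxiliary antiderivative. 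Alternatively, we can bound $z_k$ one level more regularly by using the $C^1$ regularity of $\widehat\mu_j$ and elliptic regularity on the smooth domain, which costs one more time derivative of $f$.
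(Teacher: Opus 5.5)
Your architecture is the same as the paper's:
\begin{itemize}
\item an a priori Laplace-domain bound for the block system;
\item decay of the outer coefficients by iterating single-row resolvent bounds (your Step~3 is Proposition~\ref{prop:decay-spatial-reg-JK});
\item transfer to the time domain by Plancherel with $e^{\sigma T}$ bounded;
\item a Gr\"onwall energy estimate for the remainder, driven by the defect at $|k|>J(K-1)$.
\end{itemize}
The gap is in Step~1. Everything else rests on it, and it is also the base case of your Step~3 induction.

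\textbf{Step 1.} A Neumann series around $\mathrm{diag}(s_k^2+A_0)$ does not converge uniformly on the line $\Re s=\sigma$. The single-row amplification in the energy norm is $C\rho|s_k|/\sigma$, and you replaced $|s_k|$ by $|k|/\eps$. That replacement is only valid for $|s|\lesssim JK/\eps$. For the row $k=0$, and for every row once $|\Im s|\gg JK/\eps$, the factor is $C\rho|s|/\sigma$. So the perturbation argument gives nothing for $|\Im s|\gtrsim \sigma/(C\rho)$, while Plancherel needs the whole line. This loss is real: near resonances of $s^2+A_0$ the diagonal inverse does have $H_0^1$-norm of order $|s|/\sigma$. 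The condition $C\rho/(\eps\sigma)<1$ only controls low frequencies. The same factor is harmless in Step~3, because there it occurs only $K$ times and is polynomial in $|s|$.

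Block coercivity is therefore not ``equivalent'' to the Neumann series. It works for a structural reason you never invoke:
\begin{itemize}
\item Test the system with $D_{JK}(s)\widehat z$, i.e.\ weight row $k$ by $\overline{s+ik/\eps}$, and take real parts. With $b_{km}=(\widehat\mu_{k-m}\nabla z_m,\nabla z_k)$, the coupling contributes $\rho\,\overline{s}\sum_{k,m}b_{km}-\frac{i\rho}{\eps}\sum_{k,m}k\,b_{km}$.
\item Since $\widehat\mu_{-j}=\widehat\mu_j$ is real, $(b_{km})$ is Hermitian. The first sum is therefore real, so its real part carries $\Re s$ and never $|s|$.
\item That first term is absorbed into $c_\mu\Re s\,\|\nabla z\|^2$ once $\rho\sum_j\|\widehat\mu_j\|_{L^\infty}$ is small relative to $c_\mu$. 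This absolute smallness, independent of $\eps$ and $T$, is what ``sufficiently small $\rho$'' means, not $C\rho T<\eps$.
\item The second sum symmetrizes to $\frac{\rho}{2\eps}\sum_{k,m}(k-m)\Im b_{km}$ with $|k-m|\le J$. It is absorbed when $\Re s>c\rho/\eps$.
\end{itemize}
This is the mechanism of Lemma~\ref{lem:coercive-K} and Proposition~\ref{prop:well-posedness-spatially-variable}. It gives $\|D_{JK}(s)\widehat z\|+\|\nabla\widehat z\|\le C(\Re s)^{-1}\|\widehat f\|$, without your extra factor $|s|$.

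\textbf{Step 2.} Your extra factor $|s|$ also breaks Step~2, because nothing absorbs it. It costs one more time derivative of $f$, on top of the one you spend on the supremum in time. So Step~2 as written bounds $\sup_t\|\partial_t^{r+1}z\|$ by $\partial_t^{r+3}f$, not $\partial_t^{r}f$.

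\textbf{Minor points.}
\begin{itemize}
\item It is simpler to take $\sigma=c\rho/\eps+1/T$ instead of $\sigma=1/T$. Then $e^{\sigma T}$ stays bounded under $T<\eps/(4\rho)$ whatever the size of $c$.
\item In Step~4, integrating by parts in time differentiates $e^{i\ell t/\eps}$ and costs a factor $1/\eps$. This is harmless only because $d$ carries an extra factor $\rho$ and $\rho/\eps<1/(4T)$.
\item Your elliptic-regularity alternative in Step~4 needs $\mu_0$ Lipschitz, which \eqref{eq:mu-prop} does not provide.
\end{itemize}
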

In the case of spatially variable modulations $\mu$, covered by Theorem~\ref{thm:mfe-low-reg}, stronger decay conditions can probably be derived, but overcoming the technical complications encountered by the spatial dependency of $\mu$ is beyond the scope of this paper (some aspects of the spatial modulation is formulated in Section~\ref{sect:spatially-variable}). 
For the sake of presentation and to demonstrate the technique that leads to the rapid decay condition of Theorem~\ref{thm:mfe}, we focus on the formulation \eqref{eq:MFE} in the following sections. 

\begin{remark}[Rapidly oscillating excitations]
The coupled dynamical systems \eqref{eq:MFE}/\eqref{eq:general-mfe-sys} can further be formulated for source terms that themselves have the form of a modulated Fourier expansions, i.e.
 \begin{align*}
     f(x,t/\eps) = \sum_{\ell=-L}^L f^L_\ell(x,t) e^{i\ell t/\eps},
 \end{align*}
with slowly varying coefficients $f_\ell^L$. Then, the corresponding coupled system \eqref{eq:MFE}/\eqref{eq:general-mfe-sys} can be formulated only with the slowly varying coefficients $f_\ell^L$ on the right-hand side. Under sufficient decay conditions of $f_\ell^L$, the decay conditions of Theorems~\ref{thm:mfe}--\ref{thm:mfe-low-reg} can be carried over, or slightly adapted where the decay starts at the first index at which $f_\ell^L$ vanishes.
\end{remark}

\begin{remark}[Limitations of the approach]
The results of this manuscript hold for small $\varepsilon$, as long as the ratio $\rho t/\eps$ is bounded by a constant that may exponentially appear as a multiplicative factor on the right-hand side of all error bounds. This limitation, which limits the scope of the work on timescales $t\sim 1$ to small-amplitude modulations, is inherited of the time-modulated acoustic wave equation, which itself is only known to be stable up to such time scales (see Proposition~\ref{prop:energy-estimate}). The coupled dynamical system is stable only on such time scales as well, which is the consequence of the reduced domain of analyticity of the Laplace domain resolvent of Proposition~\ref{prop:well-posedness} (corresponding to the condition that $\Re s > c  \rho/\eps$ is required for the system to be well-posed). A stronger theory, if at all possible, would need to overcome both of these difficulties, to extend the theory present for longer times. Such results would be interesting additions to the literature, but are beyond the scope of this manuscript.
\end{remark}
\begin{remark}[Symmetry of the coefficients]\label{rem:symmetry}
We remark that conjugating the coupled system reads, for real-valued $f$,
\begin{align}\label{eq:td-coupled-system-conj}
 (\partial_t -ik/\varepsilon)^2\overline{z}_k  - \Delta \overline{z}_k-\rho\Delta (\overline{z}_{k-1}+\overline{z}_{k+1}) =  \delta_{k0} f.
 \end{align}
A comparison with the original system and the uniqueness of the solution (which will be established subsequently in the manuscript) shows that 
 \begin{align}\label{eq:td-symmetry-zk}
     \overline{z}_k = z_{-k},
 \end{align}
 which shows that the modulated Fourier expansion is real-valued, as long as $f$ has no complex-valued components.
\end{remark}
\section{Laplace domain analysis of the coupled system}\label{sect:Lap}
Applying the Laplace transformation on both sides of the system \eqref{eq:MFE} gives a formulation for the Laplace transforms of the coefficient functions $z_k^K$, which are denoted in the following by $\widehat{z}_k^K$. For the Laplace parameter $s\in \mathbb C$, whose real part is assumed to be positive (i.e. $\Re s>0$), we enforce for all $-K\le k\le K$
\begin{align}\label{eq:mfe-sys-lap}
    \left(s+ \dfrac{ik}{\varepsilon} \right)^2 \widehat{z}^K_k -\Delta ( \widehat{z}^K_k + \rho \widehat{z}^K_{k-1}+\rho \widehat{z}^K_{k+1} )= \delta_{k0}\widehat{f}.
\end{align}
Here, we used that the coefficient functions $z^K_k$ vanish initially. More general initial conditions would include additional terms on the right-hand side, which is neglected in the manuscript to simplify the presentation, but is not a restriction of the approach overall. In the following, the functional analytic setting of this system is introduced to give a rigorous formulation of the Laplace domain coupled system \eqref{eq:mfe-sys-lap}.
\begin{remark}[Connection to coupled harmonics]
There has been significant recent effort to understand the system of coupled harmonics that arises from discretizing truncating quasi-periodic temporal boundary conditions 
by a temporal Fourier base and limiting the range of temporal frequencies (see, e.g. \cite{NHA24,HD24,HR25,ACHR24}). The resulting approximations to Floquet exponents are precisely the poles of the resolvent of \eqref{eq:mfe-sys-lap} (analogous to resonance frequencies of passive media, which are poles of the resolvent $R(s)$ of the Helmholtz problem).
\end{remark}
\noindent In the following, we install the coupled system on the appropriate space for the Laplace transforms of the coefficient functions, namely 
\begin{align*}
V_K = H_0^1(\Omega)^{2K+1}.
\end{align*}
Acting on this space are truncated operators corresponding to the differential operators appearing in the time-modulated acoustic wave equation, which are denoted by
\begin{align*}
D_K (s)\, \colon \, V_K \rightarrow V_K', 
\quad \text{and}\quad
\mathcal{T}_\Delta \, \colon \, V_K \rightarrow V_K'.
\end{align*}
Their action on the components of the coefficient functions reads 

\begin{align*}
 D_K (s) z = \left((s+\tfrac{ik}{\eps} )z_k\right)_{|k|\le K}
 \quad \text{and} \quad 
\mathcal{T}_\Delta z = \left(-\rho \Delta z_{k-1} -\Delta z_k
 -\rho \Delta z_{k+1} \right)_{|k|\le K}.
\end{align*}
At the indices $k=K,-K$ we use the convention that $z_{K+1}=z_{-K-1}=0$ to simplify the notation. The right-hand side is denoted by $$\widehat f^K = \left(\delta_{k0} \widehat f \right)_{-K\le k \le K}.$$
The strong form of the Laplace domain system of coupled equations then reads
\begin{align}\label{eq:trunc-sys-th}
 (D_K^2 (s) + \mathcal{T}_\Delta ) \widehat z^K 
 &=  \widehat f^K,
\end{align}
and admits the weak formulation 
\begin{align}\label{eq:weak-K}
a_K(v,\widehat z^K) \coloneqq \left\langle v, 
 (D_K^2 (s) + \mathcal{T}_\Delta ) \widehat z^K \right \rangle
 &= 
 \left\langle v, \widehat f^K
\right \rangle
\quad \text{for all} \quad v \in V_K.
\end{align}
The anti-duality $\left \langle \cdot,\cdot \right\rangle$ of $V_K\times V_K'$ is the sum of each component 
\begin{align*}
\left \langle v,w\right\rangle
= 
\sum_{k=-K}^K \left \langle v_k,w_k\right\rangle_{H^{1}_0(\Omega)\times H^{-1}(\Omega)}\quad \text{for all} \quad v,w \in V_K,
\end{align*}
where each anti-duality $\left \langle \cdot ,\cdot\right\rangle_{H^{1}_0(\Omega)\times H^{-1}(\Omega)}$ associated to a single component is an extension of the $L^2(\Omega)$ pairing. To simplfiy the notation for the pairing associated with the $H^1(\Omega)$ semi-norm, we write
\begin{align*}
\left\langle z_1,z_2 \right\rangle_1
=
\left\langle \nabla z_1,\nabla z_2 \right\rangle .
\end{align*}
The following section analyzes gives sufficient conditions such that the weak formulation \eqref{eq:weak-K}  admits a unique solution, that is bounded by the right-hand side $\widehat f^K$. In view of time-dependent bounds, particular emphasis is made to track the dependence of $s$ on the bounds.
\subsection{Well-posedness of the coupled system}
For $s$ with sufficiently large real part, the coupled system \eqref{eq:weak-K} is well-posed, which is investigated in this subsection. More precisely, the statements of this section hold for Laplace parameters that fulfill
\begin{align}\label{assumpt:well-posedness}
    \Re s > \dfrac{4\rho}{\varepsilon}.
\end{align}
This assumption ensures that the mixed terms that appear in \eqref{eq:weak-K} can be absorbed, a fact that is elucidated in the following Lemma.
\begin{lemma}[Coercivity of $a_K$]\label{lem:coercive-K}
Let $s \in \mathbb C$ fulfill the assumption \eqref{assumpt:well-posedness}. Under this assumption, we have the coercivity estimate
\begin{align*}
\Re \, a_K(D_K (s) z,  z)
 \ge \tfrac{1}{2}\,\Re s\,\left(\| D_K (s)z\|^2
 +\|\nabla  z \|^2\right),
\end{align*}
which holds for all $z\in V_K$.
\end{lemma}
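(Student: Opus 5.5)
The plan is to compute $\Re\, a_K(D_K(s)z,z)$ directly, component by component, and split it into a diagonal part (mass plus stiffness) and an off-diagonal coupling part carrying the factor $\rho$. Write $s_k = s + ik/\eps$, so that $\Re s_k = \Re s$ for every $k$. I use that the anti-duality is antilinear in its first argument and linear in its second. With the test function $v=D_K(s)z$, whose components are $s_k z_k$, the $k$-th term becomes
\begin{align*}
\left\langle s_k z_k,\, s_k^2 z_k - \Delta z_k - \rho\Delta z_{k-1} - \rho \Delta z_{k+1}\right\rangle .
\end{align*}

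The diagonal terms are straightforward. The mass term equals $\overline{s_k}\, s_k^2 \|z_k\|^2 = s_k |s_k z_k|^2$, whose real part is $\Re s\,\|s_k z_k\|^2$. The stiffness term equals $\overline{s_k}\,\|\nabla z_k\|^2$, whose real part is $\Re s\,\|\nabla z_k\|^2$. Summing over $|k|\le K$ gives exactly $\Re s\,(\|D_K(s)z\|^2 + \|\nabla z\|^2)$. This is twice the bound claimed in the Lemma, so half of it is available to absorb the coupling.

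The main obstacle is the coupling terms. The point is not to bound them individually, since $|s_k|$ grows like $|k|/\eps$ and would be far too large. Instead I group each pair of neighbours $(k,k+1)$. With $a_k = \langle \nabla z_k, \nabla z_{k+1}\rangle$, the two contributions are $\rho\,\overline{s_k}\, a_k$, from row $k$, and $\rho\,\overline{s_{k+1}}\,\overline{a_k}$, from row $k+1$. Taking real parts and using $\Re(\overline{s_{k+1}}\,\overline{a_k}) = \Re(s_{k+1} a_k)$, the pair contributes
\begin{align*}
\rho\, \Re\big( (\overline{s_k} + s_{k+1})\, a_k\big), \qquad \overline{s_k} + s_{k+1} = 2\Re s + \tfrac{i}{\eps}.
\end{align*}
The large imaginary parts $\pm ik/\eps$ cancel, and only the $k$-independent quantity $2\Re s + i/\eps$ survives. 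By Cauchy--Schwarz and Young's inequality, $|a_k| \le \tfrac12(\|\nabla z_k\|^2 + \|\nabla z_{k+1}\|^2)$. Summing over $k$ then bounds the total coupling by $\rho\,(2\Re s + 1/\eps)\,\|\nabla z\|^2$. The boundary indices need no special care, because $z_{\pm(K+1)}=0$.

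It remains to absorb this into the spare $\tfrac12\Re s\,\|\nabla z\|^2$. Assumption \eqref{assumpt:well-posedness} gives $\rho/\eps < \tfrac14 \Re s$. Hence $\rho(2\Re s + 1/\eps) \le (2\rho + \tfrac14)\Re s$, which is at most $\tfrac12\Re s$ as long as $\rho \le 1/8$. I would state this smallness of $\rho$ explicitly; it is the regime the paper assumes throughout, since $\rho \ll 1$. Combining the pieces yields
\begin{align*}
\Re\, a_K(D_K(s)z,z) \ge \Re s\,\|D_K(s)z\|^2 + \tfrac12\Re s\,\|\nabla z\|^2 \ge \tfrac12\Re s\left(\|D_K(s)z\|^2 + \|\nabla z\|^2\right),
\end{align*}
which is the claimed estimate.
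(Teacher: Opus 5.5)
Your proof is correct and follows essentially the same route as the paper: the diagonal terms give $\Re s\,(\|D_K(s)z\|^2+\|\nabla z\|^2)$, and the coupling terms are controlled by pairing neighbouring indices so that the large $k/\eps$ factors cancel down to $1/\eps$. The paper separates the coupling into its terms II and III and telescopes III, whereas you treat both at once via $\overline{s_k}+s_{k+1}=2\Re s+\tfrac{i}{\eps}$; the final absorption uses $4\rho<\eps\Re s$ together with a smallness condition on $\rho$, which the paper also needs (``for $\rho$ small enough'') and which you make explicit as $\rho\le 1/8$.
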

\begin{proof}
A direct consequence is the coercivity of the first summand via
\begin{align*}
\Re \left\langle D_K (s) z, 
 D_K^2 (s) z \right \rangle   
 &= \Re s \|D_K (s)  z \|^2.
\end{align*}
We start to rewrite the second summand, which reads
\begin{align*}
&\Re \left\langle D_K (s) z , 
\mathcal{T}_\Delta  z \right \rangle
 =\Re
 \sum_{k=-K}^K \left(\overline{s}-\dfrac{ik}{\eps} \right)
 \left\langle z_k, 
 \rho z_{k-1}+ \widehat{z}_k+\rho z_{k+1} \right \rangle_1
 \\ &
 =\Re s  \sum_{k=-K}^K 
  \|z_k\|_1^2+
  \rho \Re  \left(\overline{s}
  \sum_{k=-K}^K
 \left\langle z_k, 
 z_{k-1}+z_{k+1} \right \rangle_1\right)
 -  \dfrac{\rho}{\eps} \sum_{k=-K}^K\Re ik
 \left\langle z_k, 
 z_{k-1}+z_{k+1} \right \rangle_1
 \\ &= 
 \quad\quad\quad\quad\mathrm{I}
 \quad\quad \, \, + 
 \quad\quad\quad\quad\quad\quad\mathrm{II} 
 \quad\quad\quad\quad \quad\quad\quad\ \, \,\,-
 \quad\quad\quad\quad\mathrm{III}.
\end{align*}
Our intention is to absorb the terms in the second and third series II and  III with the terms appearing in the first sum I. The series appearing as a factor in the second summand is crucially real-valued, due to the identity
\begin{align*}
 & \sum_{k=-K}^K
 \left\langle z_k, 
z_{k-1}+z_{k+1} \right \rangle_1
 = 2 \Re  
  \sum_{k=-K}^K
 \left\langle z_k, z_{k+1} \right \rangle_1 
.
\end{align*}

The second summand can therefore be absorbed for $\rho$ small enough, since 
\begin{align*}
 \mathrm{II} = \rho \Re  \overline{s}
  \sum_{k=-K}^K
 \left\langle z_k, 
 z_{k-1}+z_{k+1} \right \rangle_1 
&= 
2\rho \,\Re s \,\,\Re  
  \sum_{k=-K}^K
 \left\langle z_k, z_{k+1} \right \rangle_1 
 \\ &\le 
 \rho 2\Re s \sum_{k=-K}^K \left \| z_k \right\|^2.
\end{align*}
The final term is estimated in a similar way and used to replace the coefficient $k$ of the summands by a bounded coefficient, by a telescope-like argument. We rewrite the series by analyzing two summands at once, which gives
\begin{align*}
 \dfrac{\varepsilon}{\rho} \mathrm{III}
 &= \Im (-K)
 \left\langle z_{-K}, 
z_{-K+1} \right \rangle_1
 +\Im(-K+1)
 \left\langle z_{-K+1}, 
 z_{-K}+z_{-K+2} \right \rangle_1+\ldots
\\  &=
 \Im
 \left\langle z_{-K+1}, 
z_{-K} \right \rangle_1
 +\Im(-K+1)
 \left\langle z_{-K+1}, z_{-K+2} \right \rangle_1 
 +\ldots,
 \\ &\ldots 
 \\ & = 
\Im
 \left\langle z_{-K+1}, 
z_{-K} \right \rangle_1+
\ldots 
+ \Im
 \left\langle z_{K}, 
z_{K-1} \right \rangle_1. 
\end{align*}
The key argument here is that each summand in this series "almost" eliminates its successor, up to the coefficient $1$.
Applying the Cauchy--Schwarz inequality and Young's inequality on each summand of the right-hand side therefore gives the bound
\begin{align*}
 \mathrm{III}  &\le  
 \dfrac{\rho}{\varepsilon}\sum_{k=-K}^{K} 
 \left\| z_{k} \right\|_1^2.
 \end{align*}
Under the stated assumption of $4\rho < \varepsilon  \Re s, $ we can therefore absorb the term with the bound from below for I.

\end{proof}
The coercivitiy result for $a_K$ is the key for the well-posedness result and the bound of the coefficients $\widehat z^K$. The continuity of the bilinear form is obtained by applying the Cauchy--Schwarz inequality to both arising terms, which is formulated in the following. 
\begin{lemma}\label{lem:aK-bound}
The bilinear form $a_K$ is bounded, namely it holds that
\begin{align*}
|a_K(D_K (s)  v, w) |
&\le 
C\left(|s|+\dfrac{K}{\eps}\right) 
\left( \left\|D_K(s)  v\right\| 
\left\|D_K(s)  w \right\|
+
 \left\|\nabla  v\right\|\left\|\nabla  w \right\|\right)´,
\end{align*}
where $C$ only depends on an upper bound of $\rho$.
\end{lemma}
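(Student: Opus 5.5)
We expand the form $a_K(D_K(s)v,w)=\langle D_K(s)v,(D_K^2(s)+\mathcal T_\Delta)w\rangle$ into its two summands and bound each by the Cauchy--Schwarz inequality. No coercivity-type cancellation is needed here, only crude bounds, so the plan is short.

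The first summand is
\begin{align*}
\left\langle D_K(s)v, D_K^2(s)w\right\rangle=\sum_{k=-K}^K \left(\overline{s}-\tfrac{ik}{\eps}\right)\left\langle (s+\tfrac{ik}{\eps})v_k,(s+\tfrac{ik}{\eps})w_k\right\rangle .
\end{align*}
Here we use that $D_K(s)$ acts componentwise as multiplication by the scalar $s+ik/\eps$, so one factor can be moved across the pairing at the cost of conjugation. Since $|s+ik/\eps|\le |s|+K/\eps$ for $|k|\le K$, the componentwise Cauchy--Schwarz inequality, followed by the discrete Cauchy--Schwarz inequality over $k$, gives the bound $(|s|+K/\eps)\|D_K(s)v\|\|D_K(s)w\|$.

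The second summand is
\begin{align*}
\left\langle D_K(s)v,\mathcal T_\Delta w\right\rangle=\sum_{k=-K}^K \left(\overline{s}-\tfrac{ik}{\eps}\right)\left\langle v_k, w_k+\rho w_{k-1}+\rho w_{k+1}\right\rangle_1 ,
\end{align*}
after integrating by parts, which is justified by the homogeneous Dirichlet conditions in $H_0^1(\Omega)$. Again we bound $|\overline{s}-ik/\eps|\le |s|+K/\eps$ and apply Cauchy--Schwarz to $\langle \nabla v_k,\nabla w_{k\pm1}\rangle$. Summing over $k$ and using that the index shift $k\mapsto k\pm 1$ (with the truncation convention $w_{\pm(K+1)}=0$) does not increase the $\ell^2$ norm over $k$, we obtain $(1+2\rho)(|s|+K/\eps)\|\nabla v\|\|\nabla w\|$.

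Adding both bounds gives the claim with $C=1+2\bar\rho$, where $\bar\rho$ is an upper bound of $\rho$. The only point requiring care is the bookkeeping of the shifted indices at the truncation boundary $|k|=K$ and of the conjugation of $D_K(s)$ in the anti-duality. Neither is a genuine obstacle.
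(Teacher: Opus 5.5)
Your proposal is correct and follows the paper's route: the paper gives no separate proof and only states that the lemma follows from Cauchy--Schwarz applied to the two summands $\langle D_K(s)v, D_K^2(s)w\rangle$ and $\langle D_K(s)v,\mathcal T_\Delta w\rangle$. Your argument carries this out with $|s+ik/\eps|\le |s|+K/\eps$ and the index-shift bound, giving $C=1+2\bar\rho$. One cosmetic slip: under the paper's convention (antilinear first slot, as in its coercivity proof), the prefactor in your first display should be $s+ik/\eps$ rather than $\overline{s}-ik/\eps$, but only its modulus enters, so the estimate is unaffected.
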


The well-posedness of the truncated Laplace domain system \eqref{eq:weak-K} now follows from standard Lax--Milgram theory, under the stated bound \eqref{assumpt:well-posedness} from below on the real part of the Laplace parameters $s$.
\begin{proposition}\label{prop:well-posedness}
For any $s \in \mathbb C$ that fulfills the assumption \eqref{assumpt:well-posedness},
the system \eqref{eq:weak-K} has the unique solution $\widehat z^K$, which is bounded by
\begin{align*}
\| D_K (s)\widehat z^K\|^2
 +\|\nabla \widehat z^K \|^2
 \le \dfrac{ 4}{(\Re s)^2} \|\widehat f \|^2.
\end{align*}
\end{proposition}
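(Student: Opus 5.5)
The plan is to apply Lax--Milgram, not to $a_K$ directly, but to the modified sesquilinear form $b_K(v,w) := a_K(D_K(s)v, w)$ on $V_K \times V_K$. Lemma~\ref{lem:coercive-K} gives coercivity of $b_K$, and Lemma~\ref{lem:aK-bound} gives its continuity.

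\textbf{Step 1: functional-analytic setting.} For fixed $s$ with $\Re s > 0$, the operator $D_K(s)$ acts componentwise as multiplication by the nonzero scalars $s + ik/\eps$. It is therefore an isomorphism of $V_K$ and of $V_K'$. We equip $V_K$ with the $s$-dependent norm
\begin{align*}
\|z\|_s^2 = \|D_K(s) z\|^2 + \|\nabla z\|^2 ,
\end{align*}
which is equivalent to the standard $H_0^1(\Omega)^{2K+1}$ norm, with constants depending on $s$, $K$ and $\eps$.

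\textbf{Step 2: Lax--Milgram.} Lemma~\ref{lem:aK-bound} (with $v = w = z$ and general pairs) shows that $b_K$ is bounded in $\|\cdot\|_s$. Lemma~\ref{lem:coercive-K} gives
\begin{align*}
\Re\, b_K(z,z) \ge \tfrac12 \Re s\, \|z\|_s^2 .
\end{align*}
Lax--Milgram then yields a unique $\widehat z^K \in V_K$ with
\begin{align*}
b_K(v, \widehat z^K) = \langle D_K(s) v, \widehat f^K \rangle \quad \text{for all } v \in V_K .
\end{align*}
Since $v \mapsto D_K(s)v$ is a bijection of $V_K$, this is equivalent to the weak formulation \eqref{eq:weak-K}, so existence and uniqueness transfer directly. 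The right-hand side is a bounded functional: $\widehat f^K$ has only the $k=0$ component $\widehat f \in L^2(\Omega)$, and $\|D_K(s)v\|$ is controlled by $\|v\|_s$.

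\textbf{Step 3: the bound.} Test with $v = \widehat z^K$. The coercivity estimate and Cauchy--Schwarz give
\begin{align*}
\tfrac12 \Re s\,\|\widehat z^K\|_s^2
\le \Re\, a_K(D_K(s)\widehat z^K, \widehat z^K)
= \Re \langle D_K(s)\widehat z^K, \widehat f^K \rangle
\le \|D_K(s)\widehat z^K\|\,\|\widehat f\|
\le \|\widehat z^K\|_s\,\|\widehat f\| .
\end{align*}
Dividing by $\|\widehat z^K\|_s$ gives $\|\widehat z^K\|_s \le \tfrac{2}{\Re s}\|\widehat f\|$. Squaring yields the stated estimate with constant $4/(\Re s)^2$.

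\textbf{Main obstacle.} The only delicate point is the pairing in Step 3. We must interpret $\langle D_K(s)\widehat z^K, \widehat f^K\rangle$ as an $L^2$ pairing, which is legitimate because $\widehat f \in L^2(\Omega)$. This is what allows us to bound it by the $L^2$ norm of $D_K(s)\widehat z^K$, a quantity controlled by $\|\cdot\|_s$, rather than by an $H^{-1}$ norm. This choice is what makes the final constant independent of $\eps$, $K$ and $|s|$.
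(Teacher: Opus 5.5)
Your proposal is correct and follows essentially the paper's argument. Like the paper, you apply Lax--Milgram using the coercivity of $a_K(D_K(s)\cdot,\cdot)$ from Lemma~\ref{lem:coercive-K} and the continuity from Lemma~\ref{lem:aK-bound}, then test with $D_K(s)\widehat z^K$ and bound the $L^2$ pairing with $\widehat f$. The differences are cosmetic: you spell out the passage from $b_K$ to \eqref{eq:weak-K} through the bijectivity of $D_K(s)$, which the paper leaves implicit, and you divide by $\|\widehat z^K\|_s$ where the paper uses Young's inequality and absorption, with the same constant $4/(\Re s)^2$.
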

\begin{proof}
The bound and the  well-posedness is the direct consequence of the Lax--Milgram Theorem with the coercivity of Lemma~\eqref{lem:coercive-K} and the bound of Lemma~\eqref{lem:aK-bound}. The bound on the solution $\widehat z^K$ is seen by the standard chain of inequalities
\begin{align*}
\| D_K (s)\widehat z^K\|^2
 +\|\nabla \widehat z^K \|^2
 &\le
 \dfrac{2}{\Re s} \Re a_K(D_K(s) \widehat z^K,\widehat z^K)
 =  \dfrac{2}{\Re s}\left(D_K(s)\widehat z^K,\widehat f^K \right)
 \le 
 \tfrac{1}{2}\| D_K (s)\widehat z^K\|^2 
 + \dfrac{2}{(\Re s)^2}\|\widehat f\|^2,
\end{align*}
by absorbing the first summand on the right-hand side.
\end{proof}

The goal of this section is to show error bounds for the modulated Fourier expansions that are explicit with respect to a high power of the small parameters $\rho$ and $\varepsilon$. These bounds are the consequence of the decay of the coefficient functions $\widehat{z}_k^K$, which mitigate the effect of the truncation at the index $K$. The following section proves such decay estimates.

\subsection{Decay of the coefficients $\widehat{z}^K_k$}
To bound the truncation error, we use a decay property of the coefficients $\widehat z^K_k$ with respect to the index $k$. 
Rearranging the $k$-th equation in the coupled system \eqref{eq:mfe-sys-lap} reads (for $k\neq 0$)
\begin{align}\label{eq:mfe-sys-lap-rearranged}
    \left(s+\dfrac{ik}{\eps}\right)^2 \widehat z^K_k - \Delta \widehat z^K_k =\rho \Delta \widehat z^K_{k-1}+\rho \Delta \widehat z^K_{k+1}.
\end{align}
Resolvent bounds for the operators associated to the standard Helmholtz problem (with the wave numbers $s+ik/\eps$) therefore give preliminary bounds on the coefficients. The following paragraphs collect such estimates.
\subsubsection*{Preliminary: Resolvent bounds for the unmodulated operators}
 The resolvent of the standard Helmholtz equation is denoted by
\begin{align}\label{eq:def-res}
R(s) = (s^2 - \Delta )^{-1}.
\end{align}
Note that two resolvents on the appropriate compatible spaces with different arguments $s_1,s_2 \in\mathbb C$ (with positive real part) commute, since we have  
\begin{align*}
    R(s_1)R(s_2) =  ((s_2^2 - \Delta ) (s_1^2 - \Delta ) )^{-1} = ((s_1^2 - \Delta ) (s_2^2 - \Delta ) )^{-1} 
    = R(s_2)R(s_1).
\end{align*}
Moreover, the same argument gives the commutation of any Helmholtz resolvent with the Laplace operator, since we have (on compatible spaces)
\begin{align*}
\Delta R(s) &= (s^2 \Delta^{-1} - \Delta \Delta^{-1} )^{-1}
=( \Delta^{-1}s^2 -\Delta^{-1} \Delta  )^{-1} = R(s)\Delta .
\end{align*}
The Helmholtz resolvents satisfy the following bounds.
\begin{lemma}\label{lem:th-res-bounds}
	For arbitrary $\Re s>0$, we have the following bounds on the resolvents of the Helmholtz problem
	 \begin{align}
	 \left\| R(s)\right\|_{L^2(\Omega)\leftarrow L^2(\Omega)}&\le \dfrac{1}{|s|\,\Re s},\\
	 \left\| R(s)\right\|_{H^1_0(\Omega)\leftarrow L^2(\Omega)}&\le \dfrac{1}{\sqrt{2}\,\Re s}.
    \end{align}
\end{lemma}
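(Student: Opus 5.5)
Both bounds come from testing the weak Helmholtz problem with suitable test functions and taking real and imaginary parts. Let $g\in L^2(\Omega)$ and $u=R(s)g\in H_0^1(\Omega)$, i.e. $u$ solves
\begin{align*}
s^2\langle v,u\rangle + \langle v,u\rangle_1 = \langle v,g\rangle \quad \text{for all } v\in H_0^1(\Omega).
\end{align*}
Existence and uniqueness follow from Lax--Milgram. The sesquilinear form is coercive after multiplication by $\overline{s}/|s|$, because
\begin{align*}
\Re\bigl(\overline{s}\,(s^2\|u\|^2+\|\nabla u\|^2)\bigr)=\Re s\,\bigl(|s|^2\|u\|^2+\|\nabla u\|^2\bigr),
\end{align*}
using $\Re(\overline{s}s^2)=|s|^2\Re s$ and $\Re\overline{s}=\Re s$. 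This is the same mechanism as in Lemma~\ref{lem:coercive-K}.

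\textbf{Step 1 (test with $v=su$).} Taking $v=su$ and real parts gives
\begin{align*}
\Re s\,\bigl(|s|^2\|u\|^2+\|\nabla u\|^2\bigr)=\Re\,\overline{s}\langle u,g\rangle\le |s|\,\|u\|\,\|g\|.
\end{align*}

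\textbf{Step 2 ($L^2$ bound).} Dropping the gradient term gives $\Re s\,|s|^2\|u\|^2\le |s|\,\|u\|\,\|g\|$. Dividing by $\|u\|$ yields $\|u\|\le \|g\|/(|s|\Re s)$, which is the first bound.

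\textbf{Step 3 ($H_0^1$ bound).} Apply Young's inequality $|s|\,\|u\|\,\|g\|\le \tfrac{1}{2}\Re s\,|s|^2\|u\|^2+\tfrac{1}{2\Re s}\|g\|^2$ and absorb the first term on the left. This leaves
\begin{align*}
\tfrac12\Re s\,|s|^2\|u\|^2+\Re s\,\|\nabla u\|^2\le \tfrac{1}{2\Re s}\|g\|^2,
\end{align*}
so $\|\nabla u\|^2\le \|g\|^2/(2(\Re s)^2)$, i.e. $\|\nabla u\|\le \|g\|/(\sqrt2\,\Re s)$. This is the second bound, where the $H_0^1$ norm is read as the gradient seminorm, as in the paper's $\langle\cdot,\cdot\rangle_1$ convention.

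The only delicate point is keeping the exact constant $1/\sqrt2$. This requires absorbing the Young term into the $|s|^2\|u\|^2$ part rather than estimating $\|u\|$ separately. Using the Step 2 bound for $\|u\|$ instead would give only $1/\Re s$. Everything else is routine.
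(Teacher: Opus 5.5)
Your proof is correct and follows essentially the same route as the paper. The paper divides the Helmholtz equation by $s$, tests with the solution and takes real parts, which gives exactly your Step~1 identity multiplied by $|s|^{-2}$, and then uses Cauchy--Schwarz, Young's inequality and absorption; the only cosmetic difference is that you obtain the $L^2$ bound by dividing by $\|u\|$ rather than through Young's inequality, with the same constant $1/(|s|\,\Re s)$.
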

\begin{proof} Bounds of this type are well-known in the context of Laplace domain techniques for the acoustic wave equation (find e.g. \cite[Section~4.4]{BS22} for a collection of estimates). For the convenience of the reader, we briefly prove the stated estimates. Let $\widehat{w}$ defined by $ (s^2-\Delta)\widehat{w}=   \widehat{g} $. Dividing both sides by $s$, testing that equation with $\widehat{w}$ and taking the real part gives
\begin{align*}
&\Re s \int_\Omega |\widehat{w} |^2 + |s^{-1}\nabla \widehat{w}|^2 \mathrm d x
= \Re \int_\Omega \,\overline{\widehat{w}} \cdot s^{-1} \widehat{g} \, \mathrm d x ,
\\&\le \int_\Omega \left|  \widehat{w}\right| \left|s^{-1}  \widehat{g} \right| \, \mathrm d x 
\le \int_\Omega \dfrac{\Re s}{2} \left|  \widehat{w}\right|^2+ \dfrac{1}{2\,\Re s}\left| s^{-1} \widehat{g} \right|^2 \mathrm d x
.
\end{align*}
Absorbing the first summand on the right and neglecting the second summand on the left-hand side gives the first result. In turn, neglecting the first summand on the left-hand side gives the second bound.
\end{proof}
We often encounter the combination of a Helmholtz resolvent with a Laplace operator (due to the occurrence of their combination in \eqref{eq:mfe-sys-lap-rearranged}). The following lemma gives several operator bounds for this combination.
\begin{lemma}\label{lem:th-res-bounds-R-Delta}
	For arbitrary $\Re s>0$, we have the following operator bounds
	 \begin{align}
	\left\| R(s)\Delta\right\|_{H_0^1(\Omega)\leftarrow H_0^1(\Omega)}&\le \dfrac{|s|}{\Re s},
\\
	 \left\| R(s)\Delta\right\|_{L^2(\Omega)\leftarrow H_0^1(\Omega)}&\le \dfrac{1}{\sqrt{2}\Re s},
\\
	\left\| R(s)\Delta\right\|_{L^2(\Omega)\leftarrow L^2(\Omega)}&\le \dfrac{|s|}{\Re s}.
    \end{align}
\end{lemma}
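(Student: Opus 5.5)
The plan is to write $R(s)\Delta = s^2R(s) - I$, which holds on compatible spaces since $R(s)(s^2-\Delta)=I$. This reduces every bound on $R(s)\Delta$ to a bound on the Helmholtz problem $(s^2-\Delta)\widehat w = -\Delta \widehat g$ or $=\widehat g$. Rather than only combining the bounds of Lemma~\ref{lem:th-res-bounds}, which would lose constants, I would redo the energy argument from the proof of Lemma~\ref{lem:th-res-bounds} with the right-hand side $-\Delta \widehat g$ treated in weak form. So let $\widehat w = R(s)\Delta \widehat g$, i.e. $s^2\widehat w - \Delta \widehat w = \Delta \widehat g$ weakly. Divide by $s$, test with $\widehat w$ and take real parts. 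This gives
\begin{align*}
\Re s\left(\|\widehat w\|^2 + |s|^{-2}\|\nabla \widehat w\|^2\right) = -\Re\, \overline{s}^{-1}\left\langle \nabla \widehat w, \nabla \widehat g\right\rangle ,
\end{align*}
using $\Re(\overline{s}^{-1}) = \Re s/|s|^2$ on the gradient term, as in the earlier lemma.

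For the first bound ($H_0^1\leftarrow H_0^1$), drop the $L^2$ term on the left. Cauchy--Schwarz gives $\Re s\,|s|^{-2}\|\nabla\widehat w\|^2 \le |s|^{-1}\|\nabla \widehat w\|\|\nabla \widehat g\|$, hence $\|\nabla \widehat w\|\le |s|/\Re s\,\|\nabla \widehat g\|$.

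For the second bound ($L^2\leftarrow H_0^1$), keep both terms. Apply Young's inequality to the right-hand side:
\begin{align*}
|s|^{-1}\|\nabla\widehat w\|\|\nabla\widehat g\| \le \tfrac{\Re s}{2}|s|^{-2}\|\nabla\widehat w\|^2 + \tfrac{1}{2\Re s}\|\nabla\widehat g\|^2 .
\end{align*}
After absorbing, we get $\Re s\|\widehat w\|^2 \le \tfrac{1}{2\Re s}\|\nabla \widehat g\|^2$, hence $\|\widehat w\|\le \tfrac{1}{\sqrt2\,\Re s}\|\nabla\widehat g\|$. This mirrors exactly the second estimate of Lemma~\ref{lem:th-res-bounds}, and I expect the constant to come out as stated.

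For the third bound ($L^2\leftarrow L^2$), the weak-form trick no longer applies, since $\widehat g$ is only in $L^2$. Here I would use spectral calculus: $-\Delta$ with Dirichlet conditions is self-adjoint and positive on $L^2(\Omega)$, so $R(s)\Delta = -\lambda/(s^2+\lambda)$ evaluated at $\lambda = -\Delta$. The task then reduces to the scalar estimate
\begin{align*}
\sup_{\lambda>0} \frac{\lambda}{|s^2+\lambda|} \le \frac{|s|}{\Re s}.
\end{align*}
To prove it, write $\lambda=\mu^2$ and factor $s^2+\mu^2=(s+i\mu)(s-i\mu)$. Then $|s\pm i\mu|\ge \Re s$, and also $\max|s\pm i\mu| \ge \sqrt{|s|^2+\mu^2}\ge \mu$, which gives $\mu^2/|s^2+\mu^2| \le \mu/\Re s$. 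This bound is too weak for large $\mu$. For large $\mu$ one should instead use $\mu^2/|s^2+\mu^2|\le \mu^2/(\mu^2-|s|^2)$, and the two regimes have to be balanced. A cleaner route is the identity $\lambda/(s^2+\lambda) = 1 - s^2/(s^2+\lambda)$ combined with the first estimate of Lemma~\ref{lem:th-res-bounds} in sharpened form, giving $|s|^2/|s^2+\lambda|\le |s|/\Re s$; but that yields $1+|s|/\Re s$.

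The main obstacle is this last scalar inequality with the sharp constant. I would attack it geometrically. Setting $s = |s|e^{i\theta}$ with $|\theta|<\pi/2$, the distance from $-\lambda$ to $s^2$ satisfies $|s^2+\lambda|\ge \lambda\sin|2\theta|$ when $|\theta|\ge\pi/4$, and $|s^2+\lambda|\ge\lambda$ when $|\theta|<\pi/4$. Then $1/\sin 2|\theta| = 1/(2\sin|\theta|\cos\theta) \le |s|/\Re s$, because $2\sin|\theta|\ge1$ for $|\theta|\ge\pi/6$. The case $|\theta|<\pi/4$ is trivial since $|s|/\Re s\ge1$. This should close the third estimate.
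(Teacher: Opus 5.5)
Your proof is correct and follows the paper's route: the first two bounds come from the same weak-form energy identity (divide by $s$, test with $\widehat w$, take real parts, then Cauchy--Schwarz/Young and absorption), and the third is reduced, as in the paper, via the spectral theorem to the scalar estimate $\sup_{\lambda>0}\lambda/|s^2+\lambda|\le |s|/\Re s$. The only difference is how that scalar estimate is proved: the paper multiplies by $|s|=|\overline{s}|$ and uses $\bigl|\,|s|^2 s+\overline{s}\lambda\,\bigr|\ge \Re\bigl(|s|^2 s+\overline{s}\lambda\bigr)=\Re s\,(|s|^2+\lambda)$, while your case split on $\theta=\arg s$ (distance from $-\lambda$ to the line through $0$ and $s^2$, plus $2\sin|\theta|\ge 1$ for $|\theta|\ge\pi/4$) is also valid, so you can delete the abandoned detours via $\lambda=\mu^2$ and $1-s^2/(s^2+\lambda)$.
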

\begin{proof} 
Let $\widehat{w}$ be defined by $ (s^2-\Delta)\widehat{w}=  \Delta \widehat{g} $. Then, we obtain the bound
\begin{align*}
&\Re s \int_\Omega   |\widehat{w} |^2 + |s^{-1}\nabla \widehat{w}|^2  \mathrm d x
= \Re \int_\Omega \,s^{-1}\nabla \overline{\widehat{w}} \cdot \nabla \widehat{g} \, \mathrm d x ,
\\&\le \int_\Omega \left| s^{-1} \nabla \widehat{w}\right| \left|\nabla \widehat{g} \right| \, \mathrm d x 
\le \int_\Omega \dfrac{\Re s}{2} \left| s^{-1} \nabla \widehat{w}\right|^2+ \dfrac{1}{2\,\Re s}\left|\nabla \widehat{g} \right|^2 \mathrm d x
.
\end{align*}
Absorbing the first summand on the right-hand side into the left-hand side then yields
\begin{align*}
& \int_\Omega   |\widehat{w} |^2 + \tfrac12 |s^{-1}\nabla \widehat{w}|^2 \mathrm d x
\le
\dfrac{1}{2\,(\Re s)^2}\int_\Omega \left|\nabla \widehat{g} \right|^2 \, \mathrm d x
,
\end{align*}
and consequently, for $\Re s > 0$, 
\begin{align}\label{eq:bound-rec}
\left\| \widehat{w} \right\|_{} &\le \dfrac{1}{\sqrt2\,\Re s} \left\| \widehat{g} \right\|_{1} ,
\\
\left\| \widehat{w} \right\|_{1} 
&\le \dfrac{|s|}{\Re s} \left\| \widehat{g} \right\|_{1},
\end{align}
which proves the first two bounds. For the final bound, we use the spectral theorem for the Laplace operator to obtain
\begin{align*}
\| R(s) \Delta \|_{L^2(\Omega)\leftarrow L^2(\Omega)}&= \sup_{\lambda >0}  \dfrac{\lambda}{| s^2+\lambda|} 
= \sup_{\lambda >0}  \dfrac{|s|\lambda}{ | |s|^2s+\overline{s} \lambda |} 
\le \sup_{\lambda >0}  \dfrac{|s|\lambda}{ \Re s ( |s|^2 +\lambda )} \le
\dfrac{|s|}{\Re s}.
\end{align*}
\end{proof}



\subsection*{Decay estimates for $\|\widehat z_k^K \|_1$}
The resolvent estimates yields decay estimates by rearranging \eqref{eq:mfe-sys-lap-rearranged} and applying the bound of Lemma~\ref{lem:th-res-bounds}. For any $k$ within the appropriate range, this yields
\begin{align*}
    \| \widehat z^K_k \|_{1} &=\rho  \left \|  \left( (s+ik/\eps )^2 - \Delta \right)^{-1}\Delta (\widehat z^K_{k-1}+\widehat z^K_{k+1}) \right\|_{1}
 \le 
    \left( \dfrac{\rho}{\eps} \dfrac{ |k| }{\Re s}+ \rho \dfrac{ |s|}{\Re s} \right)  \left( \big \| \widehat z^K_{k-1} \big\|_{1}
    +\big \| \widehat z^K_{k+1} \big\|_{1}\right).
\end{align*}

We start with the fact that the coefficients are bounded due to Proposition~\ref{prop:well-posedness}, which gives under the stated assumptions, for any $k$ with $-K\le k\le K$, the estimate
\begin{align}\label{eq:basic-bound}
\left\|\widehat z_{k}^K \right\|_1
\le \left(
\sum_{k'=-K}^K \left \| \widehat z_{k'}^K\right\|_1^2 \right)^{\tfrac{1}{2}}
\le \dfrac{2}{\Re s} \left\| \widehat f\right\|.
\end{align}
For any \(k\neq 0\), however, we can first use \eqref{eq:bound-rec} and then use this bound, which gives 
\begin{align*}
\|\widehat z^K_k\|_1
&\le    \left( \dfrac{\rho}{\eps} \dfrac{|k| }{\Re s}+ \rho \dfrac{|s|}{\Re s} \right)  \left( \big \| \widehat z^K_{k-1} \big\|_{1}
    +\big \| \widehat z^K_{k+1} \big\|_{1}\right)
\le  \left( \dfrac{\rho}{\eps}\dfrac{ 2K}{\Re s}+ \rho \dfrac{2|s|}{\Re s} \right) \dfrac{2}{\Re s} 
 \left\| \widehat f\right\|.
\end{align*}
In comparison with the bound \eqref{eq:basic-bound}, we have gained an additional factor that tends linearly to zero as $\rho/\eps \rightarrow 0$ (for fixed $s$). An induction over this process, namely using the bound for any $|k|\ge m$ for $m=0,\dots,K$, then gives the following bound.
\begin{proposition}\label{prop:decay-no-spatial-reg}
Let $s \in \mathbb C$ satisfy the assumption \eqref{assumpt:well-posedness}.
For any $k$ in the appropriate range $-K\le k \le K$, the coefficients $\widehat z^K_k$ decay with respect to $k$ in the sense that
\begin{align*}
\|\widehat z^K_k\|_1 
 &\le  \left( \dfrac{\rho}{\eps}\dfrac{ 2 K}{\Re s}+ \rho \dfrac{2 |s|}{\Re s} \right)^{k} \dfrac{2}{\Re s} 
 \left\| \widehat f\right\|.
\end{align*}
The analogous bound holds for $|k|$ for negative indices $k<0$.
\end{proposition}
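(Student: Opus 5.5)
The plan is to prove the bound by induction on a level $m$, rather than on $k$ itself. Throughout, abbreviate
\begin{align*}
q \coloneqq \dfrac{\rho}{\eps}\dfrac{2K}{\Re s}+\rho\dfrac{2|s|}{\Re s}, \qquad M\coloneqq \dfrac{2}{\Re s}\|\widehat f\|.
\end{align*}
The claim to establish for $m=0,1,\dots,K$ is
\begin{align*}
\|\widehat z_k^K\|_1\le q^m M \quad\text{for all } k \text{ with } m\le |k|\le K .
\end{align*}
Taking $m=|k|$ then gives the statement, for positive and negative $k$ at once. The proof uses neither $q<1$ nor any ordering between the levels, so no smallness assumption beyond \eqref{assumpt:well-posedness} is needed.

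The base case $m=0$ is exactly \eqref{eq:basic-bound}, which follows from Proposition~\ref{prop:well-posedness}. For the induction step, fix $k$ with $m+1\le|k|\le K$; in particular $k\neq0$, so the rearranged equation \eqref{eq:mfe-sys-lap-rearranged} applies. Setting $\sigma=s+ik/\eps$, it gives
\begin{align*}
\widehat z_k^K=\rho\, R(\sigma)\Delta\big(\widehat z_{k-1}^K+\widehat z_{k+1}^K\big).
\end{align*}
Since $\Re\sigma=\Re s>0$, the first bound of Lemma~\ref{lem:th-res-bounds-R-Delta} yields
\begin{align*}
\|R(\sigma)\Delta\|_{H_0^1\leftarrow H_0^1}\le \dfrac{|\sigma|}{\Re s}\le\dfrac{|s|+|k|/\eps}{\Re s}\le\dfrac{|s|+K/\eps}{\Re s}.
\end{align*}

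Next I check that both neighbours lie at level $m$. If $|k|\ge m+1$, then $|k\pm1|\ge m$. A neighbour index outside $[-K,K]$ is zero by the truncation convention, so it trivially satisfies the bound. The induction hypothesis therefore gives $\|\widehat z_{k\pm1}^K\|_1\le q^mM$, and hence
\begin{align*}
\|\widehat z_k^K\|_1\le \rho\,\dfrac{|s|+K/\eps}{\Re s}\cdot 2q^mM=q^{m+1}M,
\end{align*}
because $2\rho(|s|+K/\eps)/\Re s$ is precisely $q$. This closes the induction.

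The main point needing care is the bookkeeping of the index sets. The induction must be over the sets $\{|k|\ge m\}$, not over single indices. The reason is that $\widehat z_k^K$ is coupled to $\widehat z_{k+1}^K$, which lies farther out, so one cannot argue from small $|k|$ outward. Because the neighbours of any index at level $m+1$ always lie at level $\ge m$, the level-set formulation avoids any circularity. The only other step that needs attention is the uniform replacement of $|k|$ by $K$ in the resolvent bound, which is what makes the factor $q$ independent of $k$.
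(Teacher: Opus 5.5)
Your proof is correct and matches the paper's argument. Both run an induction over the level sets $\{m\le |k|\le K\}$, take the base case from Proposition~\ref{prop:well-posedness}, and close the step with the $H_0^1\leftarrow H_0^1$ bound $|s+ik/\eps|/\Re s\le (|s|+K/\eps)/\Re s$ from Lemma~\ref{lem:th-res-bounds-R-Delta}; you additionally make explicit the truncated neighbours $\widehat z^K_{\pm(K+1)}=0$ and the fact that no condition $q<1$ is needed, both of which the paper leaves implicit.
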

\begin{proof}
    As discussed in the paragraphs leading up to the proposition, the statement follows by induction over $|k|$, by the direct combination of the resolvent bounds of Lemma~\ref{lem:th-res-bounds} with the standard bound on the coefficients described in Proposition~\ref{prop:well-posedness}.
\end{proof}
In the previous bound, no assumptions were made on the spatial regularity of the right-hand side $\widehat f$. Under additional spatial regularity of the right-hand side and the domain $\Omega$, higher rates of decay hold.

\begin{proposition}\label{prop:decay-spatial-reg}
Let $\Omega$ be smooth and $s \in \mathbb C$ have sufficiently large real part to fulfill assumption \eqref{assumpt:well-posedness}. Moreover, let $\widehat f$ be supported on $\Omega_0$, an arbitrary open subset of $\Omega$ such that $\overline{\Omega_0} \subset \Omega$. 
For any $k$ in the appropriate range $0\le k \le K$, the coefficients $\widehat z^K_k$ decay with respect to $k$ in the sense that
\begin{align*}
   \|\widehat z^K_k \|_1
\le  
  C \rho^k\eps^{k}   \dfrac{|s|^k }{(\Re s)^{2k+1} } \left\|\Delta^{k+1} \widehat f\right\|,
\end{align*}
where the constant $C$ depends only on $K$. For negative indices $k<0$, the analogous bound holds with the corresponding exponents $|k|$.
\end{proposition}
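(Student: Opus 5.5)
Instead of the resolvent bound of Lemma~\ref{lem:th-res-bounds-R-Delta}, which only yields the factor $\rho|k|/(\eps\Re s)$, I will use that the frequency $s+ik/\eps$ is large for $k\neq 0$. In that regime the Helmholtz resolvent acts almost like multiplication by $(s+ik/\eps)^{-2}$, and each such inversion gains a factor $\eps^2$ at the price of two extra Laplacians. The basic tool is the resolvent identity
\begin{align*}
R(\sigma)\Delta = \sigma^{-2}\Delta + \sigma^{-2} R(\sigma)\Delta^2, \qquad \sigma = s+\tfrac{ik}{\eps},
\end{align*}
which follows from $\sigma^2 R(\sigma) = I + R(\sigma)\Delta$ and from the commutation of $R(\sigma)$ with $\Delta$ noted above. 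The quantity to propagate through an induction is not only $\|\widehat z_k^K\|_1$ but $\|\Delta^m \widehat z_k^K\|_1$ for all $m$ up to about $K$. Since $\Omega$ is smooth and $\widehat f$ is compactly supported in $\Omega_0$, the powers $\Delta^{m}\widehat f$ satisfy the Dirichlet compatibility conditions. Applying $\Delta^m$ to the system \eqref{eq:mfe-sys-lap} then shows that $\Delta^m\widehat z^K$ solves the same system with right-hand side $\Delta^m\widehat f$. Proposition~\ref{prop:well-posedness} therefore gives $\|\Delta^m \widehat z^K_k\|_1 \le \frac{2}{\Re s}\|\Delta^m\widehat f\|$ for every $k$, which serves as the base of the induction.

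The induction is over $|k|$, top-down in regularity. From \eqref{eq:mfe-sys-lap-rearranged} we have $\widehat z_k^K = \rho R(\sigma_k)\Delta(\widehat z^K_{k-1}+\widehat z^K_{k+1})$. I would aim for a bound of the shape
\begin{align*}
\|\Delta^m \widehat z^K_k\|_1 \le C\,\rho^{|k|}\eps^{|k|}\frac{|s|^{|k|}}{(\Re s)^{2|k|+1}}\,\|\Delta^{m+|k|+1}\widehat f\|,
\end{align*}
proved for all $m\le K-|k|$. For the step $|k|-1\to|k|$, the neighbour $\widehat z_{k+1}^K$ is farther out, so for it I only use the crude base bound. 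For the nearer neighbour $\widehat z_{k-1}^K$ I use the induction hypothesis. In both cases $R(\sigma_k)\Delta$ must gain one factor $\rho\eps|s|/(\Re s)^2$ while consuming one power of $\Delta$, and I would obtain this as follows:
\begin{itemize}
\item Write $\sigma_k^2 = (s+ik/\eps)^2$, so that $|\sigma_k|\ge |k|/\eps - |s|$.
\item Use the identity above in the form $R(\sigma_k)\Delta = \sigma_k^{-2}\bigl(\Delta + R(\sigma_k)\Delta^2\bigr)$.
\item Bound $\|R(\sigma_k)\Delta\, g\|_1 \le \frac{|\sigma_k|}{\Re s}\|\Delta g\|_1$ using Lemma~\ref{lem:th-res-bounds-R-Delta}.
\end{itemize}
This yields $\|R(\sigma_k)\Delta g\|_1\lesssim |\sigma_k|^{-2}\|\Delta g\|_1 + |\sigma_k|^{-1}(\Re s)^{-1}\|\Delta^2 g\|_1$. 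Two cases then remain. If $|s|\ge |k|/(2\eps)$, I instead use $1\le 2\eps|s|/|k|$ together with the plain bound of Lemma~\ref{lem:th-res-bounds-R-Delta}. Otherwise $|\sigma_k|^{-1}\le 2\eps/|k|$. Together with $\Re s>4\rho/\eps$ and a harmless $|s|/\Re s\ge 1$, both cases give a factor of at most $C\eps|s|/(\Re s)^2$ per step, possibly after converting one $H^1$ norm of $\Delta^m$ into an $L^2$ norm of $\Delta^{m+1}$ by Poincaré. Finally I set $m=0$.

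The main obstacle is bookkeeping the coupling to the outer neighbour $\widehat z^K_{k+1}$. That term is not yet known to be small, and a naive recursion would only gain a factor $\rho$, not $\rho\eps$. My first attempt would be to exploit that each of its contributions carries an extra $\rho$, then absorb: I would set up the recursion for the vector $(\|\Delta^m\widehat z_j^K\|_1)_{j\ge k}$ and use $\rho/(\eps\Re s)<1/4$ to sum the resulting geometric series. If that fails, I would instead bound the outer tail by the coercivity estimate of Lemma~\ref{lem:coercive-K} restricted to indices $|j|\ge k$, with the inner coupling $\rho\Delta\widehat z_{k-1}$ moved to the right-hand side. The constant $C$ depends on $K$ through the number of $\Delta$ powers and the combinatorial paths.
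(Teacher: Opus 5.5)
Your per-step gain is sound, but the induction as you set it up does not close. If you use the hypothesis only for the inner neighbour and the crude bound $\frac{2}{\Re s}\|\Delta^{\ell}\widehat f\|$ for $\widehat z^K_{k+1}$, the step gives $\|\widehat z^K_k\|_1\lesssim q\,(q^{|k|-1}+1)\frac{1}{\Re s}\|\Delta^{|k|+1}\widehat f\|$ with $q=\rho\eps|s|/(\Re s)^2$. That is a single factor of $q$, so the argument already stalls at $|k|=2$.

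Your first repair cannot fix this. The per-step amplification is $q$ (or $\rho|k|/(\eps\Re s)+\rho|s|/\Re s$ without the regularity shift). It is not controlled by $\rho/(\eps\Re s)<1/4$ and grows without bound as $|\Im s|\to\infty$, so there is no contraction. Moreover, every step consumes a Laplacian, so summing an infinite series would need infinitely many powers of $\Delta$ on $\widehat f$.

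Your second repair, coercivity for the tail subsystem on $\{k,\dots,K\}$ with $\rho\Delta\widehat z^K_{k-1}$ as data, is legitimate. The tail couples to the rest only through that term, and the telescoping in Lemma~\ref{lem:coercive-K} works on any contiguous index range. However, it only works if you take the $\eps$ from the $\|D_K(s)\cdot\|$ part of the estimate, i.e. $\|\Delta^\ell\widehat z^K_j\|\le\frac{2\rho}{|s+ij/\eps|\,\Re s}\|\Delta^{\ell+1}\widehat z^K_{k-1}\|$, and then return to $H^1_0$ via $\|v\|_1\le C\|\Delta v\|$. The gradient part alone only yields $\rho/\Re s$. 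As written it is only a fallback sketch.

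The missing idea, which is what the paper does (``using the bound for any $|k|\ge m$''), is to induct on the number $m$ of gained factors, with a hypothesis that is uniform over the whole tail:
\[
\|\Delta^{\ell}\widehat z^K_j\|\le C_m\,q^m\,\frac{1}{\Re s}\,\|\Delta^{\ell+m}\widehat f\|\quad\text{for all } |j|\ge m .
\]
The base case $m=0$ is your bound from Proposition~\ref{prop:well-posedness} plus Poincar\'e. For $|j|\ge m+1$, both neighbours satisfy $|j\pm1|\ge m$ (and $\widehat z^K_{\pm(K+1)}=0$), so both are covered by the level-$m$ hypothesis. One application of $\rho R(s+\tfrac{ij}{\eps})\Delta$ then gives level $m+1$, using $\|R(s+\tfrac{ij}{\eps})\|_{L^2\leftarrow L^2}\le\frac{1}{|s+ij/\eps|\,\Re s}$ and the elementary bound $|s+\tfrac{ij}{\eps}|^{-1}\le\frac{2\eps|s|}{|j|\Re s}$.

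That elementary bound holds for all $\Re s>0$, so your resolvent identity and case split are unnecessary. The first branch of the split, which uses the $H^1_0\leftarrow H^1_0$ bound $|\sigma_k|/\Re s$, would not produce the factor $\eps|s|/(\Re s)^2$ anyway.

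Equivalently, unroll the recursion exactly $|k|$ times: each of the at most $2^{|k|}$ paths collects $|k|$ factors whether it moves inward or outward. No smallness of $q$ is needed, so the outer neighbour is no obstacle. Taking $m=|k|$, $\ell=1$ and $\|\widehat z^K_k\|_1\le C\|\Delta\widehat z^K_k\|$ gives the stated bound.
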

\begin{proof}
Throughout the proof, we formally use powers of $\Delta$ and its inverse, a process that is made rigorous by \cite[Theorem 11.2]{HW96}. We note that due to the commutation of the Laplace operator with the resolvent of the coupled system, the bound \eqref{eq:basic-bound} extends to powers of the Laplacian in the sense that we have, for arbitrary $\ell\in\mathbb N$ (and sufficiently smooth domains $\Omega$), the estimate
\begin{align}\label{eq:basic-bound-lap}
\left\|\Delta^\ell\widehat z_{k}^K \right\|_1
\le \left(
\sum_{k'=-K}^K \left \| \Delta^\ell\widehat z_{k'}^K\right\|_1^2 \right)^{\tfrac{1}{2}}
\le \dfrac{2}{\Re s} \left\| \Delta^\ell \widehat f\right\|,
\end{align}
if $\widehat f$ is sufficiently regular for the right-hand side to be finite. Note that the assumption that $\widehat f$ is supported in the interior of $\Omega$ ensures compatibility of the boundary values.
Rewriting \eqref{eq:mfe-sys-lap-rearranged} in terms of the resolvent and applying the Laplace operator $\Delta$ on both sides then gives
\begin{align*}
  &\Delta \widehat z^K_k =\rho  \Delta R(s+\tfrac{ik}{\eps}) \Delta (\widehat z^K_{k-1}+\widehat z^K_{k+1}),
\end{align*}
and, for $k\neq 0$, the estimate
\begin{align*}
   \|\widehat z^K_k \|_1 
  &\le \rho  \|\Delta^{-1} \|_{H^1_0(\Omega) \leftarrow L^2(\Omega)} \| R(s+\tfrac{ik}{\eps}) \|_{L^2(\Omega)\leftarrow L^2(\Omega)}\left\|\Delta^2 (\widehat z^K_{k-1}+\widehat z^K_{k+1})\right\|
 \le
  C \dfrac{2\rho}{(\Re s)^2 |s+\tfrac{ik}{\eps}|}  \left\|\Delta^2 \widehat f\right\|.
\end{align*}
We note that for arbitrary $|k|\le K$, we have the elementary estimate 
\begin{align*}
    \dfrac{1 }{|s+ik/\eps|} &=  
 \dfrac{|s|}{|s||s+ik/\eps|}\le  \dfrac{|s|}{\Re s \max\left(|s|,\big|s+\tfrac{ik}{\eps} \big|\right)} 
\le \dfrac{2 \eps |s| }{|k|\Re s } .
\end{align*}
Inserting this estimate into the previous bound gives
\begin{align*}
   \|\widehat z^K_k \|_1 
& \le
  \eps\rho   \dfrac{C|s| }{(\Re s)^3 } \left\|\Delta^2 \widehat f\right\|,
\end{align*}
where $C$ depends only on $k$. More generally, for arbitrary integer-valued $\ell\ge 1$, we have for $k \neq 0$
\begin{align*}
   \|\Delta^\ell \widehat z^K_k \|
  &\le \rho   \| R(s+\tfrac{ik}{\eps}) \|_{L^2(\Omega)\leftarrow L^2(\Omega)}\left\|\Delta^{\ell+1} (\widehat z^K_{k-1}+\widehat z^K_{k+1})\right\|
\\ &\le  \rho\eps  \dfrac{2|s| }{|k|(\Re s)^2 }  \left\|\Delta^{\ell+1} (\widehat z^K_{k-1}+\widehat z^K_{k+1})\right\| 
  \le \rho\eps   \dfrac{C|s| }{(\Re s)^3 } \left\|\Delta^{\ell+1} \widehat f\right\|.
\end{align*}
The constant $C$ again only depends on $k$. Using the same induction as in the previous Proposition~\ref{prop:decay-no-spatial-reg} then gives the result.
\end{proof}


\begin{remark}[The case $K=1$]
When only a single coupled harmonic is considered, the Laplace domain coupled system is given by the block form
\begin{align}
D^2_K(s)+\mathcal{T}_\Delta =
\begin{pmatrix}
 (s-\tfrac{i}{\eps})^2-\Delta  & -\rho \Delta & 0 
\\
-\rho \Delta & s^2-\Delta  & -\rho \Delta 
\\
0 & -\rho \Delta & (s+\tfrac{i}{\eps})^2-\Delta  
\end{pmatrix}.
\end{align}
The inverse of this operator can be written as a Neumann series, which we formally write as
\begin{align*}
(D^2_K(s)´+\mathcal{T}_\Delta)^{-1} =\sum_{n=0}^\infty (\rho \Delta )^n
\begin{pmatrix}
0 & R(s-\tfrac{i}{\eps}) & 0 
\\
R(s) & 0 & R(s) 
\\
0 & R(s+\tfrac{i}{\eps})& 0.
\end{pmatrix}^n
\begin{pmatrix}
 R(s-\tfrac{ik}{\eps})  & 0 & 0 
\\
0 & R(s)  & 0
\\
0 & 0 & R(s+\tfrac{i}{\eps})
\end{pmatrix}.
\end{align*}    
We note that the second factor of the second power of the iterative matrix has the form
\begin{align*}
&\begin{pmatrix}
0 & R(s-\tfrac{ik}{\eps}) & 0 
\\
R(s)& 0 & R(s)
\\
0 & R(s+\tfrac{i}{\eps}) & 0.
\end{pmatrix}^2
 =  R(s)
 \begin{pmatrix}
R(s-\tfrac{ik}{\eps}) & 0& R(s-\tfrac{i}{\eps}) 
\\
0 &    R(s-\tfrac{ik}{\eps})+R(s+\tfrac{i}{\eps})  & 0
\\
R(s+\tfrac{ik}{\eps})  & 0 & R(s+\tfrac{i}{\eps}) 
\end{pmatrix}.
\end{align*}
Finally, the third power of the second factor reads 
\begin{align*}
&\begin{pmatrix}
0 & R(s-\tfrac{ik}{\eps}) & 0 
\\
R(s) & 0 & R(s)
\\
0 & R(s+\tfrac{i}{\eps}) & 0.
\end{pmatrix}^3
= R(s) (R(s-\tfrac{i}{\eps})+R(s+\tfrac{i}{\eps})) 
\begin{pmatrix}
0 & R(s-\tfrac{ik}{\eps}) & 0 
\\
R(s)& 0 & R(s)
\\
0 & R(s+\tfrac{i}{\eps}) & 0
\end{pmatrix}.
\end{align*}
With the commutation of the resolvent and the Laplace operator we therefore obtain explicit formulas, up to high-orders of $\rho$ and $\eps$, for the solution of the coupled system: For the leading order component with index zero we have
\begin{align*}
\widehat{z}_{0}^1
     &=  R(s) \widehat f+ 
     \mathcal O (\rho^2 \eps) ,
\end{align*}
and for the two corrections we obtain
\begin{align*}
    \widehat{z}_{-1}^1
     &= \rho R(s-\tfrac{i}{\eps}) \Delta R(s) \widehat f+
    \mathcal O (\rho^3\eps^2),
    \\ 
        \widehat{z}_{1}^1 &= \rho R(s+\tfrac{i}{\eps}) \Delta   R(s)\widehat f+
    \mathcal O (\rho^3\eps^2),
\end{align*}
for sources $\widehat f$ that satisfy that $\|\Delta^2\widehat f\|$ decays sufficiently rapidly as $|s|\rightarrow \infty$ (which is the case if it corresponds to a Laplace transform of a time-dependent function $\Delta^2 f$ ). 


\end{remark}
\subsection{Spatially variable general modulations}\label{sect:spatially-variable}
This section briefly discusses the implications of more general modulations $\mu$ of the type \eqref{eq:more-general-mu}, which are the sum of several varying systems, whose coupled Laplace domain formulation reads as follows: find the coefficients $\widehat{z}_k^{JK}\in H^1_0(\Omega)^{2JK+1}$, such that for all $k$ in the range $-JK\le k \le JK$, it holds that
\begin{align*}
    \left(s+\dfrac{ik}{\eps}\right)^2 \widehat z^{JK}_k + A_0 \widehat z^{JK}_k +\rho 
    \sum_{j=-JK}^{JK}\widehat{A}_{j}  \widehat z^K_{k-j} =\delta_{k0}\widehat{f}.
\end{align*}
The system is naturally formulated on the space
\begin{align*}
V_{JK} = H_0^1(\Omega)^{2JK+1},
\end{align*}
on which we have the operators
\begin{align*}
D_{JK}(s) \, \colon \, V_{JK} \rightarrow V_{JK}', 
\quad \text{and}\quad
\mathcal{T}_A \, \colon \, V_{JK} \rightarrow V_{JK}', 
\end{align*}
whose action on a coefficient function $z\in H_0^{JK}(\Omega)$ is determined by 
\begin{align*}
 D_{JK} (s) z = \left((s+\tfrac{ik}{\eps} )z_k\right)_{|k|\le JK}
 \,\, \text{and} \,\,
\mathcal{T}_A z = \left(A_0+\rho (\widehat{A}_{-J} z_{k-J} +\dots+
 \widehat{A}_{J}z_{k+J}) \right)_{|k|\le JK}.
\end{align*}
With these operators, we rewrite the Laplace domain system of coupled equations
\begin{align}\label{eq:trunc-sys-th}
 (D_{JK}^2 (s) + \mathcal{T}_A) \widehat z^{JK} 
 &=  \widehat f^{JK},
\end{align}
or weakly formulated as 
\begin{align}\label{eq:weak-JK}
a_{JK}(v,\widehat z^{JK}) \coloneqq \left\langle v, 
 (D_{JK}^2 (s) + \mathcal{T}_A ) \widehat z^{JK} \right \rangle
 &= 
 \left\langle v, \widehat f^{JK}
\right \rangle
\quad \text{for all} \quad v \in V_{JK}.
\end{align}

\begin{proposition}\label{prop:well-posedness-spatially-variable}
Let the parameter $\mu$ fulfill the properties described in \eqref{eq:mu-prop}.
There is a constant $c>0$, such that for all  $s \in \mathbb C$ with $\Re s > c\rho/\eps$ and for sufficiently small $\rho$, we have the following result. For all $z\in V_{JK}$ it holds that
\begin{align*}
\Re \, a_{JK}(D_{JK} (s) z,  z)
 \ge 
 C_0\,\Re s\,\left(\| D_{JK} (s)z\|^2
 +\|\nabla  z \|^2\right).
\end{align*}
Moreover, the system \eqref{eq:weak-JK} has the unique solution $\widehat z^{JK}$, which is bounded by
\begin{align*}
\| D_{JK} (s)\widehat z^{JK}\|^2
 +\|\nabla \widehat z^{JK} \|^2
 \le \dfrac{C_1}{(\Re s)^2} \|\widehat f \|^2.
\end{align*}
The constants $C_0$ and $C_1$ only depend on the modulation $\widehat \mu$.
\end{proposition}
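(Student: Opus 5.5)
We mimic the proof of Lemma~\ref{lem:coercive-K} and Proposition~\ref{prop:well-posedness}, replacing the Laplacian by the weighted operators $A_0$ and $\widehat A_j$. We test the system with $D_{JK}(s)z$ and split
\begin{align*}
\Re\, a_{JK}(D_{JK}(s)z,z) = \Re s\,\|D_{JK}(s)z\|^2 + \Re\sum_{k}\Bigl(\overline{s}-\tfrac{ik}{\eps}\Bigr)\Bigl(\langle \mu_0\nabla z_k,\nabla z_k\rangle + \rho\sum_{j=-J}^J\langle \nabla z_k,\widehat\mu_j\nabla z_{k-j}\rangle\Bigr).
\end{align*}
The static part gives $\Re s\sum_k\langle\mu_0\nabla z_k,\nabla z_k\rangle\ge c_\mu\Re s\,\|\nabla z\|^2$ by \eqref{eq:mu-prop}. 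The term $-\tfrac{ik}{\eps}\langle\mu_0\nabla z_k,\nabla z_k\rangle$ is purely imaginary and drops out after taking the real part.

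The $\overline s$-part of the coupling terms: since $\widehat\mu_j$ is real and $\widehat\mu_{-j}=\widehat\mu_j$, pairing the summands $(k,j)$ and $(k-j,-j)$ shows that $\sum_{k,j}\langle\nabla z_k,\widehat\mu_j\nabla z_{k-j}\rangle$ is real. This is the analogue of the identity used for term II. Hence this contribution equals $\rho\Re s$ times a real number bounded by $\rho\,(2J+1)\max_j\|\widehat\mu_j\|_{L^\infty}\|\nabla z\|^2$ (Cauchy--Schwarz and Young). It is absorbed into $c_\mu\Re s\|\nabla z\|^2$ once $\rho$ is small, which is where the smallness assumption on $\rho$ enters.

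The $ik/\eps$-part is the analogue of III and is the main step. For each fixed $j\neq 0$ we group the summand at $k$ with the conjugate summand at $(k-j,-j)$. Using the symmetry of $\widehat\mu_j$, their imaginary parts combine into
\begin{align*}
\tfrac{\rho}{\eps}\,\bigl(k-(k-j)\bigr)\,\Im\langle\nabla z_k,\widehat\mu_j\nabla z_{k-j}\rangle = \tfrac{\rho}{\eps}\,j\,\Im\langle\nabla z_k,\widehat\mu_j\nabla z_{k-j}\rangle .
\end{align*}
The growing factor $k$ is thus replaced by the bounded factor $j$ with $|j|\le J$, exactly as in the telescoping argument of Lemma~\ref{lem:coercive-K}. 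Cauchy--Schwarz then bounds this term by $\tfrac{\rho}{\eps}C_J\max_j\|\widehat\mu_j\|_{L^\infty}\|\nabla z\|^2$. The boundary indices, where one partner lies outside $|k|\le JK$, are simply missing and only improve the estimate. The contribution is absorbed into $\tfrac{c_\mu}{2}\Re s\|\nabla z\|^2$ as soon as $\Re s>c\rho/\eps$ with $c=2C_J\max_j\|\widehat\mu_j\|_\infty/c_\mu$. This yields the coercivity bound with $C_0=\min(1,c_\mu/4)$.

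The main care point is the bookkeeping of the double index sum, namely verifying that the pairing is a bijection on the truncated index set up to the dropped boundary terms. Only $L^\infty$ bounds on $\widehat\mu_j$ are needed here; the $C^1$ regularity is not used.

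Well-posedness and the bound then follow as in Proposition~\ref{prop:well-posedness}. Continuity of $a_{JK}$ follows as in Lemma~\ref{lem:aK-bound}, now using $\|\mu_0\|_\infty$ and $\|\widehat\mu_j\|_\infty$. Lax--Milgram, applied to the form $(v,z)\mapsto a_{JK}(D_{JK}(s)v,z)$, gives a unique solution. Testing with $D_{JK}(s)\widehat z^{JK}$ gives
\begin{align*}
C_0\Re s\bigl(\|D_{JK}\widehat z^{JK}\|^2+\|\nabla\widehat z^{JK}\|^2\bigr)\le \|D_{JK}\widehat z^{JK}\|\,\|\widehat f\|,
\end{align*}
and Young's inequality then yields the bound with $C_1=1/C_0^2$.
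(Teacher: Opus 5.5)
Your proposal is correct and follows the paper's route. The paper's proof just says to repeat the argument of Lemma~\ref{lem:coercive-K} summand by summand, absorb the mixed terms for $\Re s > c\rho/\eps$ with $\rho$ small, and conclude with Lax--Milgram; your pairing $(k,j)\leftrightarrow(k-j,-j)$ and the explicit constants supply the bookkeeping the paper leaves implicit, and you correctly take $c$ sufficiently \emph{large}, where the paper's proof writes ``sufficiently small''.
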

\begin{proof}
The coercivity of $a_{JK}$ is derived by repeating the proof of Lemma~\ref{lem:coercive-K} for each summand and choosing $c$ sufficiently small to absorb the mixed terms. The well-posedness then follows by applying the Lax--Milgram Lemma.
\end{proof}

\begin{proposition}\label{prop:decay-spatial-reg-JK}
Let $\Omega$ be smooth, let the coefficients $\mu_0$ fulfill the property \eqref{eq:mu-prop} and further let $\hat{\mu}_j\in C^{2k+1}(\Omega)$ for all $j$ in the range $-J\le j \le J$. Then, there exists a positive constant $c>0$, such that for all $s \in \mathbb C$ with  $\Re s > c\rho/\varepsilon,$ we have the following bound.
The coefficients $\widehat z^K_k$, with $k$ in the appropriate range $0\le k \le K$, decay with respect to $k$ in the sense that
\begin{align*}
   \|\widehat z^{JK}_{Jk} \|_1
\le  
 C_K\left(\dfrac{\rho}{\eps \Re s}+ \rho \dfrac{|s|}{\Re s} \right)^{k} \left\|\Delta^{k+1} \widehat f\right\|,
\end{align*}
where the constant $C$ depends only on $K$ and the coefficients $\mu_0$ and $\hat{\mu}_j$ for $-J\le j \le J$. For negative indices $k<0$, the analogous bound holds with the corresponding exponents $|k|$.
\end{proposition}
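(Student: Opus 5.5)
We propose to mimic the induction of Proposition~\ref{prop:decay-no-spatial-reg}, replacing the Helmholtz resolvent $R(s)$ by the variable-coefficient resolvent $R_0(s)=(s^2+A_0)^{-1}$. The coupling now has bandwidth $J$, which is why the induction runs over blocks of indices: we aim to show that all coefficients with $|k|> J(m-1)$ are smaller by a factor $q(s)=\rho/(\eps\Re s)+\rho|s|/\Re s$ than those with $|k|> J(m-2)$.

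\textbf{Resolvent bounds for $A_0$.} First we establish the analogues of Lemmas~\ref{lem:th-res-bounds} and~\ref{lem:th-res-bounds-R-Delta} for the operators $R_0(s)$ and $R_0(s)\widehat A_j$. The energy argument carries over verbatim: divide by $s$, test with $\widehat w$, take the real part, and use \eqref{eq:mu-prop}. This gives
\[
\|R_0(s)\widehat A_j\|_{H^1_0\leftarrow H^1_0}\le C|s|/\Re s,
\qquad
\|R_0(s)\|_{H^1_0\leftarrow L^2}\le C/\Re s .
\]
Both constants depend on $c_\mu$, $C_\mu$ and $\|\widehat\mu_j\|_{C^1}$. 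The $C^1$ regularity is used so that $\widehat A_j$ maps $H^{m+1}$ into $H^{m-1}$.

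\textbf{Rearranging the system.} For $k\neq 0$ we rewrite the $k$-th equation as
\[
\widehat z_k=-\rho\, R_0\!\left(s+\tfrac{ik}{\eps}\right)\sum_j \widehat A_j\widehat z_{k-j}.
\]
We then need a gain of the form $\rho/(\eps\Re s)$ from the shifted resolvent. As in Proposition~\ref{prop:decay-spatial-reg}, this comes from the elementary estimate $|s+ik/\eps|^{-1}\le 2\eps|s|/(|k|\Re s)$ (up to the stated combination). That estimate requires placing the operator $\widehat A_j$ in a space where $R_0$ can be measured in $L^2\to L^2$ with bound $1/(|s+ik/\eps|\Re s)$. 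This costs two spatial derivatives per step, which is the origin of the factor $\Delta^{k+1}$ on the right-hand side.

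\textbf{Induction.} We run the induction on $m$ with the quantities $M_{m,\ell}=\max_{|k|> J(m-1)}\|\widehat z_k\|_{H^{2\ell+1}}$. The base case is the bound $\|\widehat z\|_{H^{2\ell+1}}\lesssim (\Re s)^{-1}\|\widehat f\|_{H^{2\ell}}$, obtained from Proposition~\ref{prop:well-posedness-spatially-variable} together with elliptic regularity for $A_0$. The step applies the rearranged identity, which gives
\[
M_{m,\ell}\le C q(s) M_{m-1,\ell+1}.
\]
After $k$ steps this reaches $\|\widehat z_{Jk}\|_1\le C_K q^k\|\Delta^{k+1}\widehat f\|$; at the end, $H^{2\ell}$ norms of $\widehat f$ are replaced by $\|\Delta^\ell\widehat f\|$ using the interior support of $f$ and smoothness of $\Omega$. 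The outermost truncated indices only have fewer neighbours, so they need no separate treatment.

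\textbf{Main obstacle.} Unlike the constant-coefficient case, $R_0$ and $\widehat A_j$ do not commute with $\Delta$ (or with each other), so the shortcut used in Proposition~\ref{prop:decay-spatial-reg} is unavailable. We must instead prove higher-regularity bounds for the coupled system directly. The approach is to differentiate the system, or to apply $A_0^\ell$, and treat the commutators $[A_0^\ell,\widehat A_j]$ as lower-order perturbations. These commutators involve up to $2k+1$ derivatives of $\widehat\mu_j$, which is where the hypothesis $\widehat\mu_j\in C^{2k+1}$ enters. They are absorbed using the smallness of $\rho$ and the coercivity of Proposition~\ref{prop:well-posedness-spatially-variable}. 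We expect this regularity step to be the technical core of the argument.
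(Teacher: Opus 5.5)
There is a genuine gap in the step that is supposed to produce the factor $q(s)=\rho/(\eps\Re s)+\rho|s|/\Re s$. You measure $R_0(s+\tfrac{ik}{\eps})$ from $L^2$ to $L^2$ and use $|s+ik/\eps|^{-1}\le 2\eps|s|/(|k|\Re s)$, at the price of two derivatives. That is the mechanism of Proposition~\ref{prop:decay-spatial-reg}, and it yields the per-step factor $\rho\eps|s|/(\Re s)^2$, not $q(s)$. Moreover, $\rho\eps|s|/(\Re s)^2\le Cq(s)$ fails on the half-plane $\Re s>c\rho/\eps$. For large $|s|$ its ratio to $\rho|s|/\Re s$ is $\eps/\Re s$, which is unbounded when $\rho\ll\eps^2$. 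So your key inequality $M_{m,\ell}\le Cq(s)M_{m-1,\ell+1}$ does not follow from the argument you give.

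In fact $\rho/(\eps\Re s)$ is not a gain at all, since $\eps$ sits in the denominator. It comes from the crude bound you already wrote in your first paragraph, evaluated at the shifted argument. Since $\Re(s+\tfrac{ik}{\eps})=\Re s$,
\[
\bigl\|R_0(s+\tfrac{ik}{\eps})\widehat A_j\bigr\|_{H^1_0\leftarrow H^1_0}\le C\,\frac{|s+ik/\eps|}{\Re s}\le C\,\frac{|s|+JK/\eps}{\Re s}.
\]
Separately, the regularity program you identify as the technical core cannot be carried out under the hypotheses. Elliptic regularity for $A_0$ and the commutators $[A_0^\ell,\widehat A_j]$ involve derivatives of $\mu_0$, but $\mu_0$ is only assumed to satisfy the $L^\infty$ bounds \eqref{eq:mu-prop}.

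None of that machinery is needed. The paper's proof simply repeats the induction of Proposition~\ref{prop:decay-no-spatial-reg}, with Proposition~\ref{prop:well-posedness-spatially-variable} as base case.
\begin{itemize}
\item For $|k|>J(m-1)$, apply the displayed bound to $\widehat z_k=-\rho R_0(s+\tfrac{ik}{\eps})\sum_{j\neq0}\widehat A_j\widehat z_{k-j}$. Its right-hand side only involves indices $|k-j|>J(m-2)$.
\item Each application of the resolvent bound therefore advances $J$ indices, and $k$ steps reach $\widehat z_{Jk}$.
\item Fold the $j=0$ term $\rho\widehat A_0$ into the resolvent, which stays coercive for small $\rho$. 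Otherwise $\widehat z_k$ appears on both sides of your rearranged identity.
\end{itemize}
This gives $\|\widehat z^{JK}_{Jk}\|_1\le C_K\,q(s)^k(\Re s)^{-1}\|\widehat f\|$. It uses only bounded coefficients and no spatial regularity of $\widehat f$. Then $\|\widehat f\|\le C\|\Delta^{k+1}\widehat f\|$ for $\widehat f$ compatible with the Dirichlet condition (e.g.\ with interior support). So the smoothness of $\widehat\mu_j$ and the power $\Delta^{k+1}$ in the statement play no essential role.

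Finally, the factor $(\Re s)^{-1}$ from the base case cannot be absorbed into $C_K$, because $\Re s$ may be as small as $c\rho/\eps$. You should not drop it silently at the end of your induction, as you currently do.
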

\begin{proof}
The proof is given by repeating the proof of Proposition~\ref{prop:decay-no-spatial-reg} (where the resolvent bound is used for every $J$\textsuperscript{th} component of $z^{JK}$ repeatedly).
\end{proof}
\begin{remark}
Stronger decay conditions, along the lines of Proposition~\ref{prop:decay-spatial-reg}, might hold for spatially variable modulations as well, but are beyond the scope of this paper. 
\end{remark}



\section{Error bounds for the modulated Fourier expansion}\label{sect:error bounds}
Error bounds for the modulated Fourier expansion are obtained by the following structure: First, we carry the Laplace domain bounds of the previous section over to the time-domain to obtain bounds for the coefficients $z^K$. Secondly, we introduce energy estimates for the time-modulated acoustic wave equation, for the studied setting. Finally, we insert the truncated modulated Fourier expansion into the time-modulated acoustic wave equation \eqref{eq:ac-intro} and apply the energy estimates. The following section prepares this treatment by introducing the necessary notation to carry over Laplace domain bounds to the time-domain. 

\subsection{Temporal convolution operators}\label{sect:temporal_convolution}
Let $K(s)\colon V \rightarrow W$ be an analytic family of bounded linear operators, defined for all $s\in\mathbb C$ with $\Re s>\sigma_0$ for some $\sigma_0$. Here, $V$ and $W$ are generally abstract Hilbert spaces, and will be set to $L^2(\Omega)$, $H_0^1(\Omega)$ or $V_K$ in the following sections.
The bounds of the previous section are polynomial with respect to the Laplace domain parameter $s$, in the following sense: There exists a real $\kappa$, $\nu\ge 0$ and, for every $\sigma >\sigma_0$, there exists $M_\sigma <\infty$, such that
\begin{align}\label{eq:pol_bound}
\norm{K(s)}_{V'\leftarrow V}&\leq M_\sigma \dfrac{\abs{s}^\kappa}{(\Re s)^\nu}, \quad\ \text{ Re } s \ge \sigma>\sigma_0>0.
\end{align}
For regular time-dependent functions $g:[0,T]\to V$ that initially vanish together with their first $m>\kappa$ derivatives, we use the Heaviside notation of operational calculus, which writes
\begin{equation} \label{Heaviside}
K(\partial_t) g = (\mathcal{L}^{-1}K) * g .
\end{equation}
This temporal convolution of the inverse Laplace transform of $K$ with~$g$, where $g$ is extended by zero on the negative real axis, is then a time-domain formulation of applying the analytic operator family $K(s)$ to the Laplace transform of $g$. 

Concatenating two analytic families $K(s)$ and $L(s)$ acting on compatible spaces in the Laplace transform is equivalent to concatenating the two corresponding temporal convolution operators, which is the composition rule
\begin{equation}\label{comp-rule}
K(\partial_t)L(\partial_t)g = (K L)(\partial_t)g.
\end{equation}
We denote the Sobolev space of real   order $r$ of $V$-valued functions on $\mathbb R$ by $H^r(\mathbb R,V)$ and, on finite intervals $(0, T )$,
we write
$$
H_0^r(0,T; V) = \{ g|_{(0,T)} \,:\, g \in H^r(\mathbb R, V)\ \text{ with }\ g = 0 \ \text{ on }\ (-\infty,0)\} .
$$
The natural norm on $H_0^r(0,T;V)$ is, for integer-valued $r\ge 0$, denoted by
\begin{align*}
\| g\|^2_{H_0^k(0,T;V)}= \sum_{\ell=0}^k \int_0^T\| \partial_t^\ell g (t)\|_V^2 \, \mathrm d t,
\end{align*}
for $g\in H_0^r(0,T; V)$. On this space, this norm is equivalent to the expression $\| \partial_t^r g \|_{L^2(0,T;V)}$, with constants that depend on the final time $T$.
By the Plancherel formula, the temporal convolution operators $K(\partial_t)$ are bounded on the appropriate temporal Sobolev spaces, which is shown in \cite[Lemma 2.1]{L94}:
If $K(s)$ is bounded by \eqref{eq:pol_bound} in the half-plane $\text{Re }s > \sigma>0$, then $K(\partial_t)$ fulfills
\begin{equation}\label{sobolev-bound}
\| K(\partial_t) \|_{ H^{r}_0(0,T;V') \leftarrow H^{r+\kappa}_0(0,T;V)} \le e^{\sigma T} M_{\sigma} T^{\nu},
\end{equation}
for arbitrary real $r$, which we further assume to be positive. 
 To derive estimates that are not expressed as a temporal semi-norm, we rely on the continuous embedding $H^{k+\alpha}_0(0,T;V)\subset C^k([0,T];V)$, which holds for arbitrary integer-valued $k\ge 0$ and real ${\alpha>\tfrac12}$.

\subsubsection*{Well-posedness for the modulated Fourier expansion}
In terms of the Heaviside notation of operational calculus, we can formulate the time-dependent formulation of \eqref{eq:trunc-sys-th} as: Find, for all $t\in [0,T]$, the set of $z^K(t)\in V_K$, such that 
\begin{align}\label{eq:trunc-sys-td}
 (D_K^2 (\partial_t) + \mathcal{T}_\Delta )  z^K 
 &=  f^K.
\end{align} 
As before in the section dedicated to Laplace domain bounds, we use the notation $ f^K = \left(\delta_{k0} f \right)_{-K\le k \le K}$ for the right-hand side.
By the composition rule of temporal convolution operators, the solution coefficients can further be characterized by 
\begin{align}\label{eq:trunc-sys-inv-td}
 z^K 
 &=  (D_K^2 (\partial_t) + \mathcal{T}_\Delta )^{-1} f^K,
\end{align} 
where the spatio-temporal operator on the right-hand side is the temporal convolution operator associated to the resolvent of the Laplace domain equation \eqref{eq:trunc-sys-th}.
\subsection{Well-posedness and bounds on the coefficients $z^K$}

First, we formulate a well-posedness result for the time-dependent solution of the truncated modulated Fourier expansion \eqref{eq:MFE}.
\begin{proposition}\label{prop:well-posedness-td}
Let $f\in H^r_0 (0,T; L^2(\Omega))$ for some $r \ge 0$.
Then, the system \eqref{eq:weak-K} (or equivalently \eqref{eq:trunc-sys-inv-td}) has the unique solution $$z^K \in H_0^r(0,T; H_0^1(\Omega)),$$ which is bounded by
\begin{align*}
\| z^K \|_{ H^r_0 (0,T; H_0^1(\Omega))}
 \le  2T\, e^{T 4\rho/\eps  }   \, \|  f \|_{ H^r_0 (0,T; L^2(\Omega))}.
\end{align*}
\end{proposition}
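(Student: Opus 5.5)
We obtain the statement by transferring the Laplace domain bound of Proposition~\ref{prop:well-posedness} to the time domain with the Plancherel-type estimate \eqref{sobolev-bound}. The convolution operator is the resolvent family
\begin{align*}
K(s) \coloneqq (D_K^2(s)+\mathcal{T}_\Delta)^{-1}\mathcal{E}, \qquad \mathcal{E}\widehat f = \left(\delta_{k0}\widehat f\right)_{|k|\le K},
\end{align*}
viewed as a map from $L^2(\Omega)$ into $V_K=H_0^1(\Omega)^{2K+1}$, so that $z^K = K(\partial_t) f$ by \eqref{eq:trunc-sys-inv-td}.

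\textbf{Step 1: analyticity and bound of $K(s)$.} On the half-plane $\Re s>4\rho/\eps$, Proposition~\ref{prop:well-posedness} shows that $K(s)$ is well defined and unique. Its analyticity follows from the Lax--Milgram construction, since $D_K^2(s)+\mathcal{T}_\Delta$ depends polynomially on $s$ and is boundedly invertible there, so the inverse is analytic. The a priori bound of that proposition gives, after dropping the $D_K(s)$ term,
\begin{align*}
\|K(s)\|_{V_K\leftarrow L^2(\Omega)} \le \dfrac{2}{\Re s}.
\end{align*}
This is \eqref{eq:pol_bound} with $\kappa=0$, $\nu=1$, $M_\sigma=2$ and $\sigma_0=4\rho/\eps$.

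\textbf{Step 2: time domain estimate.} Apply \eqref{sobolev-bound} with $V=L^2(\Omega)$, target space $V_K$ and $\kappa=0$. This yields, for every $\sigma>\sigma_0$,
\begin{align*}
\|z^K\|_{H^r_0(0,T;H_0^1(\Omega))} \le e^{\sigma T}\, 2\, T\, \|f\|_{H^r_0(0,T;L^2(\Omega))}.
\end{align*}
Letting $\sigma\downarrow 4\rho/\eps$ gives the factor $2T e^{4\rho T/\eps}$. The constant $M_\sigma=2$ is independent of $\sigma$, so the limit is harmless.

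Alternatively, one can follow the proof of \cite[Lemma 2.1]{L94} directly. Choose $\sigma=1/T$ if that exceeds $\sigma_0$, which produces $e\cdot 2T$; otherwise use $\sigma$ slightly larger than $\sigma_0$. Either way the stated form holds up to this choice.

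\textbf{Step 3: existence, uniqueness and equivalence.} Existence follows because $K(\partial_t)f$ lies in the stated space and satisfies \eqref{eq:trunc-sys-td} by the composition rule \eqref{comp-rule}. For uniqueness, a solution with $f=0$ that vanishes for $t<0$ is Laplace transformable (with exponential growth absorbed by choosing $\Re s$ large). Its transform then solves the homogeneous system \eqref{eq:weak-K}, so it vanishes by Proposition~\ref{prop:well-posedness}. Causality, i.e.\ the property that $z^K$ vanishes for $t<0$ and depends only on $f|_{[0,t]}$, follows from analyticity and boundedness on a right half-plane via Paley--Wiener.

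\textbf{Main obstacle.} The delicate point is the precise constant. The inequality \eqref{sobolev-bound} requires $\sigma$ strictly above the abscissa $\sigma_0$, and its proof integrates along $\Re s=\sigma$ using $T^\nu$-type weights. I would check that the bound $2/\Re s\le 2/\sigma$ and the factor $e^{\sigma T}$ combine to exactly $2Te^{4\rho T/\eps}$, taking $\sigma$ close to $4\rho/\eps$. If $4\rho/\eps<1/T$, one uses instead the monotonicity in $\sigma$ of the estimate $\|z\|_{L^2(0,T)}\le e^{\sigma T}\|e^{-\sigma t}z\|_{L^2(\mathbb R)}$.
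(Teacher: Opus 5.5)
Your proposal is correct and is essentially the paper's proof. The paper's proof only inserts the bound $\|\nabla \widehat z^K\|\le \frac{2}{\Re s}\|\widehat f\|$ of Proposition~\ref{prop:well-posedness} (i.e.\ $\kappa=0$, $\nu=1$, $M_\sigma=2$, $\sigma_0=4\rho/\eps$) into \eqref{sobolev-bound}; your Step~3 on existence, uniqueness and causality is extra detail the paper does not spell out. Your caveat about the constant is justified and applies equally to the paper: the Plancherel argument behind \cite[Lemma 2.1]{L94} gives $e^{\sigma T}M_\sigma\sigma^{-\nu}$, so this route yields $2Te^{4T\rho/\eps}$ only when $4T\rho/\eps\ge 1$, whereas for $4T\rho/\eps<1$ (the regime of Theorem~\ref{thm:mfe}) the best it gives is $2eT$ at $\sigma=1/T$, and the monotonicity remark in your last paragraph does not remove this factor $e$.
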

\begin{proof}
The statement is the direct consequence of combining Proposition~\ref{prop:well-posedness} with \eqref{sobolev-bound}.
\end{proof}
\noindent As a consequence of the Sobolev embedding  $H^{1}_0(0,T;V)\subset C^0([0,T];V)$, we note that Proposition~\ref{prop:well-posedness-td} implies the bound
\begin{align*}
\sup_{t\in[0,T]}\| z^K(t) \|_{1}
 \le    2 T  e^{T 4\rho/\eps  }\|  f \|_{ H^1_0 (0,T; L^2(\Omega))},
\end{align*}
as long as the temporal regularity of $f$ is sufficiently large for the right-hand side to be bounded.

\begin{proposition}\label{prop:decay-zK}

For any $k$ in the appropriate range $-K\le k \le K$, the coefficients $\widehat z^K_k$ decay with respect to $|k|$ in the sense that
\begin{align*}
\sup_{t\in[0,T]} \| z^K_k (t)\|_1
 &\le  C \rho^{|k|}\eps^{|k|}  T^{2|k|+1} 
  e^{T 4\rho/\eps  }
 \left\| \Delta^{k+1}  f\right\|_{H^{k+1}_0 (0,T; L^2(\Omega))},
\end{align*}
as long as the right-hand side is bounded. 
When the right-hand side does not fulfill any additional spatial regularity, i.e. $f\in H^r_0 (0,T; L^2(\Omega))$, then we still have the estimate
\begin{align*}
\sup_{t\in[0,T]} \| z^K_k (t)\|_1
 &\le  C \dfrac{\rho^{|k|}}{\eps^{|k|}}   T^{|k|+1}  e^{T 4\rho/\eps  }
 \left\|  f\right\|_{H^{k+1}_ 0 (0,T; L^2(\Omega))}.
\end{align*}
The constant $C$ depends only on $K$ and the statement holds again, as long as the right-hand side is bounded.
\end{proposition}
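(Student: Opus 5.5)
The plan is to transfer the Laplace domain decay bounds of Propositions~\ref{prop:decay-spatial-reg} and~\ref{prop:decay-no-spatial-reg} to the time domain, using the operational calculus of Section~\ref{sect:temporal_convolution}. Concretely, for fixed $k$ I regard the map $\widehat f\mapsto \widehat z_k^K(s)$ as an analytic operator family $K_k(s)$, defined for $\Re s>4\rho/\eps$ by the resolvent of \eqref{eq:trunc-sys-th}. By the composition rule \eqref{comp-rule} and \eqref{eq:trunc-sys-inv-td}, the time-domain coefficient is $z_k^K = K_k(\partial_t) f$. It therefore suffices to verify a polynomial bound of the form \eqref{eq:pol_bound} and then invoke \eqref{sobolev-bound}.

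For the first estimate, I write $K_k(s)\widehat f = \widetilde K_k(s)\Delta^{|k|+1}\widehat f$. Here $\widetilde K_k(s)=K_k(s)\Delta^{-|k|-1}$ is justified because $\Delta$ commutes with the coupled resolvent, as used in the proof of Proposition~\ref{prop:decay-spatial-reg}, together with \cite[Theorem 11.2]{HW96}. Proposition~\ref{prop:decay-spatial-reg} then gives
\begin{align*}
\|\widetilde K_k(s)\|_{H_0^1(\Omega)\leftarrow L^2(\Omega)}\le C\rho^{|k|}\eps^{|k|}\frac{|s|^{|k|}}{(\Re s)^{2|k|+1}},
\end{align*}
which is \eqref{eq:pol_bound} with $\kappa=|k|$, $\nu=2|k|+1$ and $M_\sigma = C\rho^{|k|}\eps^{|k|}$, valid on $\Re s\ge\sigma$. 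Applying \eqref{sobolev-bound} with $\sigma=4\rho/\eps$ (strictly, any $\sigma$ slightly larger, or $\sigma=1/T$ if that is bigger, which is the standard trick that makes $e^{\sigma T}$ harmless) yields
\begin{align*}
\|z_k^K\|_{H^{r}_0(0,T;H_0^1)}\le C\rho^{|k|}\eps^{|k|}T^{2|k|+1}e^{4T\rho/\eps}\|\Delta^{|k|+1}f\|_{H^{r+|k|}_0(0,T;L^2)}.
\end{align*}
Taking $r=1$ and using the embedding $H^1_0(0,T;V)\subset C^0([0,T];V)$ gives the supremum bound with $H^{|k|+1}_0$ on the right-hand side.

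For the second estimate, I apply Proposition~\ref{prop:decay-no-spatial-reg} and expand $\bigl(\tfrac{2K\rho}{\eps\Re s}+\tfrac{2\rho|s|}{\Re s}\bigr)^{|k|}$ binomially into terms $\rho^{|k|}\eps^{-j}|s|^{|k|-j}(\Re s)^{-|k|}$. Each term has $\kappa\le|k|$. Because $\eps,\rho\le1$ are small, the dominant prefactor is $(\rho/\eps)^{|k|}$, since $\rho^{|k|}\eps^{-j}\le(\rho/\eps)^{|k|}$. With the extra $2/\Re s$ this gives $\nu=|k|+1$, and \eqref{sobolev-bound} applied termwise produces $T^{|k|+1}e^{4T\rho/\eps}$. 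The same Sobolev embedding finishes the argument.

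I expect the main obstacle to be the rigorous handling of the $\Delta^{-(|k|+1)}$ factorization and the required compatibility. That is, $\Delta^{|k|+1}f(t)$ must be a legitimate element of $L^2$ with vanishing boundary traces for all $t$; the interior support assumption from Proposition~\ref{prop:decay-spatial-reg} and smoothness of $\Omega$ are what make this work. A secondary point is the choice of $\sigma$ on the boundary of the admissible half-plane, which is resolved by taking $\sigma=\max(4\rho/\eps,1/T)$ plus a margin and absorbing the resulting constants into $C$.
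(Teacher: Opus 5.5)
Your proposal is correct and follows essentially the same route as the paper, which proves the statement by combining the Laplace domain bounds of Propositions~\ref{prop:decay-no-spatial-reg}--\ref{prop:decay-spatial-reg} with the Plancherel-based estimate \eqref{sobolev-bound} from \cite[Lemma 2.1]{L94} and a Sobolev embedding in time. Your version is more explicit than the paper's one-line proof: you identify $\kappa$, $\nu$ and $M_\sigma$, choose $\sigma=\max(4\rho/\eps,1/T)$ plus a margin, expand the second bound binomially, and use the embedding $H^1_0(0,T;V)\subset C^0([0,T];V)$ in the correct direction.
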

\begin{proof}
The statement is the direct consequence of the Laplace domain bounds of Propositions~\ref{prop:decay-no-spatial-reg}--\ref{prop:decay-spatial-reg} with the Plancherel formula, as formulated in \cite[Lemma 2.1]{L94}, in combination with the Sobolev embedding $C^1([0,T]) \subset H^1([0,T])$.
\end{proof}
\begin{remark}
Analogously, the following bound for the Laplace operator applied to the coefficients holds, namely for $k\ge 0$ we have
\begin{align}\label{eq:decay-rate-td}
   \|\Delta  z^K_k \|_{H_0^1(0,T;L^2(\Omega))}
\le  
  C \rho^k\eps^k T^{2k+1}  e^{T 4\rho/\eps  }\left\|\Delta^{K+1} 
  f\right\|_{H_0^{K+1}(0,T;L^2(\Omega))},
  \end{align}
  where the constant $C$ depends only on $K$.
\end{remark}
\subsection{Energy estimates for the time-modulated wave equation}
This section presents energy estimates for the time-modulated acoustic wave equation. 
 The energy of the acoustic wave, at a given time $t$, is given by 
	\begin{align}\label{energy}
		\mathcal E(t) = 	\frac{1}{2} 
		\int_D | \partial_t	u (x,t) |^2
		+\mu(x,t/\eps )\left|\nabla u (x,t)\right|^2 \, \mathrm d x.
	\end{align}
    Whenever convenient, the omnipresent spatial variable $x$ is dropped in the notation. 
    The following Lemma gives an identity for the variation of energy for sufficiently regular solutions, for which we use the formulation of \cite[Lemma~2.1]{NHA24}.
\begin{lemma}\label{lem:energy-identity-with-excit} Let $u\in C^{2}(0,T;H^1(D)) $ solve the evolution problem \eqref{eq:ac-intro} and further let $\mu$ be scaled as in \eqref{eq:ac-intro}. 
		Then we have, for all $t>0$, the energy identity
		\begin{align*}
			\mathcal E(t)- \mathcal E(0)
			&=
			\int_0^t \int_D  \partial_t u (x,t') f (x,t')+ \dfrac{\mu'(x,t'/\eps)}{\eps } \left|\nabla u (x,
            t')\right|^2\, \mathrm d x
			  \,\mathrm d t'
			.
		\end{align*}
	\end{lemma}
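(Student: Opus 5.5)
The plan is to differentiate the energy $\mathcal E(t)$ in time, using the regularity $u\in C^2(0,T;H^1(D))$ to justify moving the derivative under the spatial integral, and then integrate from $0$ to $t$. Differentiating \eqref{energy} gives
\begin{align*}
\frac{\mathrm d}{\mathrm d t}\mathcal E(t)
= \Re\int_D \partial_t^2 u\,\overline{\partial_t u}
+ \mu(x,t/\eps)\,\nabla u\cdot\overline{\nabla \partial_t u}\,\mathrm d x
+ \frac{1}{2}\int_D \frac{\mu'(x,t/\eps)}{\eps}\,|\nabla u|^2\,\mathrm d x .
\end{align*}
Here $\mu'$ denotes the derivative of $\mu$ with respect to its second (fast-time) argument. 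The factor $1/\eps$ comes from the chain rule applied to $t\mapsto\mu(x,t/\eps)$. Since $u$ is real-valued in this setting, the real parts are harmless.

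Next, the second term in the first integral is rewritten by Green's formula. Because $\partial_t u(\cdot,t)$ satisfies the homogeneous Dirichlet condition, it equals $-\int_D \nabla\cdot(\mu\nabla u)\,\partial_t u\,\mathrm d x$. This step requires $\nabla\cdot(\mu\nabla u)\in L^2$, which is available from the equation itself since $\partial_t^2u$ and $f$ are in $L^2$. Otherwise the pairing can be read as the $H^{-1}\times H^1_0$ duality. The first integral then becomes $\int_D(\partial_t^2u-\nabla\cdot\mu\nabla u)\,\partial_t u = \int_D f\,\partial_t u$ by \eqref{eq:ac-intro}.

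Integrating over $[0,t]$ yields the identity. The only point needing care is the coefficient of the $\mu'$ term. With the $\tfrac12$ in the energy, the derivative produces $\tfrac{1}{2\eps}\mu'|\nabla u|^2$. The statement as written omits the $\tfrac12$, which I would check against \cite[Lemma~2.1]{NHA24}. I expect either an inequality or a normalization convention to be intended, so I would track this constant carefully. It affects only constants in subsequent stability bounds of the type $e^{C\rho t/\eps}$.

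The main technical obstacle is the justification of differentiating under the integral and of the integration by parts at the available regularity. I would handle this by first proving the identity for smooth approximations, for instance Galerkin approximations in eigenfunctions of the Dirichlet Laplacian, or mollification in time. I would then pass to the limit using $u\in C^2(0,T;H^1)$ and $\mu,\mu'\in L^\infty$.
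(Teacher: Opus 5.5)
Your argument is correct and is the standard one. The paper does not prove this lemma itself but imports it from \cite[Lemma~2.1]{NHA24}. Differentiating $\mathcal E$, testing the weak form with $\partial_t u(t)\in H_0^1$, and integrating in time is exactly that computation. Under the stated hypothesis $u\in C^2(0,T;H^1)$, with $\mu$ of class $C^1$ in time with values in $L^\infty$, the product rule applies directly, so no Galerkin or mollification step is needed.

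You are also right about the constant, and you need not hedge. With the $\tfrac12$ in \eqref{energy}, the correct term is $\tfrac{1}{2\eps}\mu'|\nabla u|^2$, so the printed identity is off by a factor $2$ in that term. Rescaling $\mathcal E$ cannot repair it, because the $f$-term would rescale too. Nor can it be read as an inequality, because $\mu'$ changes sign. The correction only changes constants in the Grönwall bound of Proposition~\ref{prop:energy-estimate}.
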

	Combining this identity with  Grönwall's Lemma gives the following bound, where we take the formulation of \cite[Proposition~2.1]{NHA24}.
    \begin{proposition}\label{prop:energy-estimate}
In the setting of Lemma~\ref{lem:energy-identity-with-excit}, we have the energy estimate
        \begin{align*}
        \mathcal{E}(t) 
        \le 
        e^{C_\mu' (t+\eps)} \left(\mathcal{E}(0)
        + \dfrac{1}{4 C_{\mu}'} \int_0^t  \| f(t')\|^2_{L^2(\Omega)}  \, \mathrm d t' \right),
        \end{align*}
		with the constant 
		\begin{align*}
			C_{\mu}' = \dfrac{2}{2 \pi \eps^2}\int_0^{2\pi \eps } \left\| \dfrac{\mu'(t'/\eps)_+}{\mu(t/\eps) } \right\|_{L^{\infty}(\Omega)}\, \mathrm d t'
            \, \, \propto \, \, \dfrac{\rho}{\eps}.
		\end{align*}	
      
	\end{proposition}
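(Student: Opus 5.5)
The plan is to differentiate the energy $\mathcal E(t)$ in time, compare the resulting growth term with the energy itself, and close with Grönwall's inequality. The only subtlety is that the ratio $\mu'/\mu$ is large pointwise, so it has to be handled through its period average. The starting point is Lemma~\ref{lem:energy-identity-with-excit}, read in differential form:
\begin{align*}
\mathcal E'(t) = \int_\Omega \partial_t u\, f \,\mathrm d x + \frac{1}{\eps}\int_\Omega \mu'(x,t/\eps)|\nabla u|^2\,\mathrm d x .
\end{align*}

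First I bound the source term. Young's inequality with weight $C_\mu'$ gives
\begin{align*}
\int_\Omega \partial_t u\, f \le C_\mu'\|\partial_t u\|^2 + \tfrac{1}{4C_\mu'}\|f\|^2 \le 2C_\mu'\,\mathcal E + \tfrac{1}{4C_\mu'}\|f\|^2 .
\end{align*}

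Next I bound the modulation term. Dropping the negative part of $\mu'$, I have $\mu'|\nabla u|^2 \le \|\mu'_+/\mu\|_{L^\infty}\,\mu|\nabla u|^2$. Hence the modulation term is at most $\tfrac{2}{\eps}\,\|\mu'_+/\mu\|_{L^\infty}\,\mathcal E(t)$. Combining the two bounds,
\begin{align*}
\mathcal E'(t)\le a(t)\,\mathcal E(t) + \tfrac{1}{4C_\mu'}\|f(t)\|^2 ,
\end{align*}
where the rate $a(t)$ is $2C_\mu'$ plus the $\eps^{-1}$-scaled pointwise ratio. Grönwall then yields
\begin{align*}
\mathcal E(t)\le e^{\int_0^t a}\Bigl(\mathcal E(0)+\tfrac{1}{4C_\mu'}\int_0^t\|f\|^2\Bigr).
\end{align*}

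The main obstacle is controlling $\int_0^t a$. The pointwise rate $\tfrac{2}{\eps}\|\mu'_+/\mu\|$ is of size $\rho/\eps$ at each instant, but it is $2\pi\eps$-periodic in $t$. I therefore split $[0,t]$ into full periods plus one partial period. On each full period the integral equals exactly the period average defining $C_\mu'$ times the period length. The partial period contributes at most one extra period's worth, which is the source of the shift $t\mapsto t+\eps$ in the exponent (up to a harmless constant, absorbed into the normalisation of $C_\mu'$). This gives $\int_0^t a \lesssim C_\mu'(t+\eps)$.

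A possible refinement is to track the precise constant, for instance by using the weight $\mu$ in the Young step so that the factor $2$ matches the definition of $C_\mu'$. Finally, $C_\mu'\propto\rho/\eps$ holds because $\mu'(\tau)=O(\rho)$ and $\mu\ge c_\mu$ for small $\rho$. Regularity $u\in C^2$ justifies all the differentiations.
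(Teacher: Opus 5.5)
Your strategy is the one the paper intends: it gives no argument of its own beyond ``energy identity plus Gr\"onwall'' and cites \cite{NHA24} for the precise form. Your individual steps are valid: replacing $\mu'$ by $\mu'_+$, Young on the source, Gr\"onwall with $a\ge 0$, and period-averaging the pointwise rate. The gap is the final step, where you write $\int_0^t a \lesssim C_\mu'(t+\eps)$ and call the discrepancy a harmless constant. With your own rate
\[
a(t)=2C_\mu'+\tfrac{2}{\eps}\|\mu'_+(t/\eps)/\mu(t/\eps)\|_{L^\infty},
\]
the second summand has period average exactly $C_\mu'$. Splitting into full periods plus one partial period therefore gives
\[
\int_0^t a\le 2C_\mu' t+C_\mu'(t+2\pi\eps)=3C_\mu' t+2\pi\eps C_\mu'.
\]
The shift $2\pi\eps$ instead of $\eps$ only costs a bounded factor, since $C_\mu'\eps\propto\rho$. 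The factor $3$ in front of $C_\mu' t$ is not harmless: your bound exceeds the claimed one by $e^{2C_\mu' t}$, which is unbounded in $t\rho/\eps$. It also cannot be ``absorbed into the normalisation of $C_\mu'$'', because $C_\mu'$ is fixed by its definition and appears in the prefactor $\tfrac{1}{4C_\mu'}$ as well.

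This is structural, not just bookkeeping. Young with weight $\delta$ produces the prefactor $\tfrac{1}{4\delta}$ together with an extra rate $2\delta$, since $\|\partial_t u\|^2\le 2\mathcal E$. Demanding the prefactor $\tfrac{1}{4C_\mu'}$ forces an extra rate $2C_\mu'$, on top of a modulation rate whose average is already $C_\mu'$. Your suggested refinement of weighting Young by $\mu$ does not help, since the $\|\partial_t u\|^2$ term carries no $\mu$.

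As written, your argument proves only the weaker estimate with $e^{3C_\mu' t+2\pi C_\mu'\eps}$. You can recover the correct growth rate if you differentiate $\mathcal E$ directly. This gives the modulation term $\tfrac{1}{2\eps}\int_\Omega \mu'|\nabla u|^2$, which halves the averaged rate to $C_\mu'/2$. Young with $\delta=C_\mu'/4$ then yields
\[
\mathcal E(t)\le e^{C_\mu'(t+\pi\eps)}\Bigl(\mathcal E(0)+\tfrac{1}{C_\mu'}\int_0^t\|f\|^2\Bigr).
\]
This still has a source prefactor four times larger than stated. To claim the proposition with its exact constants you need a sharper treatment of the source term than plain Young plus Gr\"onwall. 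Otherwise, state the estimate with the constants your argument actually delivers.
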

    \begin{remark}
      In the case of $\mu(t/\eps) = 1+2\rho \cos(t/\eps)$, the constant is given by
    	\begin{align*}
			C_{\mu}' = \dfrac{4 \rho }{2 \pi \eps} \left( \int_0^{\pi  } \sin(t)(1- 2\rho \cos(t))  \mathrm d t' + \mathcal O ( \rho^2 ) \right)  =\dfrac{\rho}{\eps}\left(\dfrac{4}{\pi} + \mathcal O(\rho^2) \right).
		\end{align*}	
    \end{remark}

\subsection{Proof of error bounds}
Inserting the modulated Fourier expansion into the time-modulated acoustic wave equation \eqref{eq:ac-intro} gives a characterization of the remainder term of \eqref{eq:modul-Fourier}. Combining the decay conditions of the coefficients $z^K$, as they are formulated in Proposition~\ref{prop:decay-zK}, with the energy estimate of Proposition~\ref{prop:energy-estimate} gives the following error bound. 
\begin{proposition}\label{prop:error-bound-mfe}
Let $\Delta^{K+1}f\in H^{K+1}_0 (0,T; L^2(\Omega))$. 
The remainder term $R_K$ of \eqref{eq:modul-Fourier} is then bounded by
\begin{align*}
\sup_{t\in[0,T]} \| R_K (t)\|_1
 &\le  C \rho^{K}\eps^{K+1}  e^{T 4\rho/\eps  } T^{2K+1}
 \left\| \Delta^{K+1} f\right\|_{H^{K+1}_ 0 (0,T; L^2(\Omega))},
\end{align*}
where the constant $C$ only depends on $K$.
\end{proposition}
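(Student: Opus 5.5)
The plan is to insert the truncated expansion $u_K(t)=\sum_{|k|\le K} z_k^K(t)e^{ikt/\eps}$ into the time-modulated acoustic wave equation \eqref{eq:modul-ac} and identify the defect, so that $R_K=u-u_K$ solves the same equation with a computable source term. We write $\mu(t/\eps)=1+\rho(e^{it/\eps}+e^{-it/\eps})$ and use $\partial_t^2(z_k e^{ikt/\eps})=e^{ikt/\eps}(\partial_t+ik/\eps)^2z_k$. Collecting the coefficients of $e^{ikt/\eps}$ then gives $\partial_t^2u_K-\nabla\cdot\mu\nabla u_K=\sum_{|k|\le K} e^{ikt/\eps}\bigl[(\partial_t+ik/\eps)^2z_k-\Delta z_k-\rho\Delta(z_{k-1}+z_{k+1})\bigr]-\rho\Delta z_K e^{i(K+1)t/\eps}-\rho\Delta z_{-K}e^{-i(K+1)t/\eps}$. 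By the coupled system \eqref{eq:MFE}, whose truncation convention $z_{\pm(K+1)}=0$ is exactly what produces the two boundary terms, the bracket equals $\delta_{k0}f$. Hence
\begin{align*}
\partial_t^2R_K-\nabla\cdot\mu\nabla R_K = \rho\Delta z_K^K e^{i(K+1)t/\eps}+\rho\Delta z_{-K}^K e^{-i(K+1)t/\eps}=:g_K .
\end{align*}
Since all $z_k^K$ vanish initially together with their time derivatives, and $u$ does as well, $R_K$ has zero initial data and homogeneous Dirichlet conditions.

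Next we apply the energy estimate of Proposition~\ref{prop:energy-estimate} to $R_K$ with $\mathcal E(0)=0$. This gives $\|\nabla R_K(t)\|^2\lesssim \mathcal E(t)\le e^{C'_\mu(t+\eps)}(4C'_\mu)^{-1}\int_0^t\|g_K\|^2$. The factor $1/C'_\mu\sim \eps/\rho$ must be tracked, since combined with $\rho$ from $g_K$ it contributes the extra power of $\eps$. The exponential $e^{C'_\mu T}$ is absorbed into $e^{4\rho T/\eps}$, or into the constant, because $C'_\mu\approx 4\rho/(\pi\eps)$. The $L^2$ norm of $R_K$ then follows from Poincaré, or from integrating $\partial_t R_K$ in time. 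It remains to bound $\|g_K\|_{L^2(0,T;L^2)}\le 2\rho\max_\pm\|\Delta z^K_{\pm K}\|_{L^2(0,T;L^2)}$. By the remark after Proposition~\ref{prop:decay-zK}, estimate \eqref{eq:decay-rate-td} with $k=K$, this is at most $C\rho\cdot\rho^K\eps^K T^{2K+1}e^{4T\rho/\eps}\|\Delta^{K+1}f\|_{H^{K+1}_0}$.

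The main obstacle is the bookkeeping of powers of $\rho$ and $\eps$. The energy estimate alone yields $\sqrt{\int\|g_K\|^2/C'_\mu}\sim \rho^{K+1}\eps^K\sqrt{\eps/\rho}=\rho^{K+1/2}\eps^{K+1/2}$, which is not exactly $\rho^K\eps^{K+1}$. If the statement is read literally, I would instead avoid the $1/C'_\mu$ trick and estimate $\int_0^t(\partial_tR_K,g_K)$ by Cauchy--Schwarz and Grönwall with an $O(1)$ weight, which gives a $T$-dependent constant times $\rho^{K+1}\eps^K$. I would then look for the missing $\eps/\rho$ trade-off from the oscillation of $g_K$. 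Writing $e^{\pm i(K+1)t/\eps}=\pm\frac{\eps}{i(K+1)}\partial_t e^{\pm i(K+1)t/\eps}$, one can integrate by parts in time in the energy identity. This gains a factor $\eps$ at the cost of one time derivative of $\Delta z_{\pm K}^K$ and a term $(\partial_t^2R_K,\cdot)$, which is handled via the equation for $R_K$, that is by estimating the time-differentiated remainder, available because $f$ has $K+1$ time derivatives vanishing initially. Combined with $T<\eps/(4\rho)$, which turns any surplus $\rho$ into at most a bounded factor relative to $\eps$, this yields the claimed $C\rho^K\eps^{K+1}e^{4T\rho/\eps}T^{2K+1}$ bound, with $C$ depending only on $K$.
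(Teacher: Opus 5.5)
Your first step is exactly the paper's proof. The paper derives the same defect equation for $R_K$ (your signs are the consistent ones) and then declares the bound a direct consequence of Proposition~\ref{prop:energy-estimate} and \eqref{eq:decay-rate-td}, with no further argument. Your bookkeeping of that combination is right: $\|g_K\|_{L^2(0,T;L^2(\Omega))}\lesssim\rho^{K+1}\eps^K$ and $(4C'_\mu)^{-1/2}\sim(\eps/\rho)^{1/2}$ give $\rho^{K+1/2}\eps^{K+1/2}=\rho^K\eps^{K+1}\sqrt{\rho/\eps}$.

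The stated form then follows with no new analysis, by absorbing $\sqrt{\rho/\eps}$. For all $\rho,\eps>0$ one has $\sqrt{\rho/\eps}\le(8T)^{-1/2}e^{4T\rho/\eps}$, and in the regime $T<\eps/(4\rho)$ of Theorem~\ref{thm:mfe} simply $\sqrt{\rho/\eps}<(4T)^{-1/2}$. This only changes the exponent of the exponential and the power of $T$, which the paper does not track precisely anyway. Its $e^{4T\rho/\eps}$ already ignores the factor $e^{C'_\mu(T+\eps)/2}$ from the energy estimate. Contrary to your remark, that factor cannot be absorbed into a $K$-dependent constant unless $T\rho/\eps$ is bounded.

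The refinement you propose instead has a gap at the term $\int_0^t(\partial_t^2R_K,G_K)\,\mathrm dt'$. Here $G_K=\frac{\eps\rho}{i(K+1)}\bigl(\Delta z^K_K e^{i(K+1)t/\eps}-\Delta z^K_{-K}e^{-i(K+1)t/\eps}\bigr)$ is the antiderivative created by your integration by parts.

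If you control $\partial_t^2R_K$ through the time-differentiated remainder equation, its source $\partial_tg_K$ contains $\pm\frac{i(K+1)}{\eps}$ times the terms of $g_K$. There is also the extra term $\partial_t\mu\,\Delta R_K$ with $\partial_t\mu=O(\rho/\eps)$. So you only get $\|\partial_t^2R_K\|\lesssim\rho^{K+1}\eps^{K-1}$, and the contribution to $\mathcal E$ is $\eps\rho\cdot\rho^{K+1}\eps^{K-1}\cdot\rho^K\eps^K=\rho^{2K+2}\eps^{2K}$. That puts you back at $\|\nabla R_K\|\lesssim\rho^{K+1}\eps^K$: the factor $\eps$ gained from the oscillation is lost again.

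What does work is to substitute $\partial_t^2R_K=\mu\Delta R_K+g_K$. The piece $(g_K,G_K)$ is harmless: up to terms containing $\partial_tz^K_{\pm K}$, it is the total derivative $\tfrac12\tfrac{\mathrm d}{\mathrm dt}\|G_K\|^2$. But $(\mu\Delta R_K,G_K)=-\mu(\nabla R_K,\nabla G_K)$ requires $\|\nabla\Delta z^K_{\pm K}\|\lesssim\rho^K\eps^K$ together with $\Delta z^K_{\pm K}=0$ on $\Gamma$. That is one spatial derivative more than \eqref{eq:decay-rate-td} provides; via the Laplace-domain argument it costs roughly $\Delta^{K+2}f$ instead of $\Delta^{K+1}f$.

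This is not an artefact. The gain of $\eps$ from the temporal oscillation is a non-resonance effect, and it fails for eigenmodes of $-\Delta$ with eigenvalue near $(K+1)^2/\eps^2$. Correspondingly, the bound $\|R(s)\|_{H^1_0(\Omega)\leftarrow L^2(\Omega)}\le1/(\sqrt2\,\Re s)$ of Lemma~\ref{lem:th-res-bounds} does not improve as $|\Im s|\to\infty$. So the gain must be paid for with spatial smoothness of the source.

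Under that stronger hypothesis your route gives the better bound $\rho^{K+1}\eps^{K+1}$, for which $\rho\le1$ suffices to conclude. The appeal to $T<\eps/(4\rho)$ is not available here in any case: it is a hypothesis of Theorem~\ref{thm:mfe}, not of this proposition.
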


\begin{proof}
Inserting the modulated Fourier expansion into the time-modulated acoustic wave equation yields, by construction of the coefficients $z_k^K$
\begin{align*}
(\partial_t^2-(1+2\rho \cos(t/\eps))\Delta )\sum_{k=-K}^K z_k^K(x,t) e^{i kt/\varepsilon}
&= f+
\rho \Delta \left( z_{-K}^Ke^{i (-K-1)t/\varepsilon}+z_{K}^{K} e^{i (K+1)t/\varepsilon} \right) .
\end{align*}
Subtracting the time-modulated acoustic wave equation therefore yields the following error equation for the  remainder term $R_K$
\begin{align*}
(\partial_t^2-(1+2\rho \cos(t/\eps))\Delta ) \dot{R}_K
&= 
\rho \Delta \left( z_{-K}^Ke^{i (-K-1)t/\varepsilon}+z_{K}^{K} e^{i (K+1)t/\varepsilon} \right)  .
\end{align*}
The statement is now the direct consequence of the energy estimate of Proposition~\ref{prop:energy-estimate} combined with \eqref{eq:decay-rate-td}.

\end{proof}

\section{Structural implications of the MFE}\label{sect:struct-pres}
The modulated Fourier expansion has implications on the exact solution of the time-modulated acoustic wave equation, which is explored in the following two subsections.

\subsection{An unexpected quasi-conservation property for short times}
Using the change of energy as quantified in Lemma~\ref{lem:energy-identity-with-excit} as the foundation of a Grönwall argument ignores cancellations by the oscillatory nature of the modulation. By using these cancellations, we obtain a stronger quasi-preservation of the energy over short times. 
\begin{proposition}\label{prop:change-energy}
Consider the time-modulated acoustic wave equation with the modulation $\eqref{eq:modul-ac}$. Let the right-hand side fulfill $\Delta f\in H_0^2(0,\widetilde T; L^2(\Omega))$ be supported in $[0,\widetilde T]$ (i.e. vanishing for $t>\widetilde T$) and further let $T> \widetilde  T$ be some later time. 
Then, there exists a constant $C$, such that 
\begin{align*}
\left|\mathcal E( T)
-\mathcal E (\widetilde T) \right|
& \le
C\rho  
  e^{C T \rho/\eps  }
 .
\end{align*}
The constant $C$ depends only on the geometry $\Omega$, on the final time $T$ and the right-hand side $f$, but is independent of $\rho$ and $\eps$.
\end{proposition}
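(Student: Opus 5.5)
The plan is to start from the energy identity of Lemma~\ref{lem:energy-identity-with-excit}. Since $f$ vanishes on $(\widetilde T, T]$, it gives
\begin{align*}
\mathcal E(T)-\mathcal E(\widetilde T) = \int_{\widetilde T}^{T}\int_\Omega \dfrac{\mu'(t'/\eps)}{\eps}\,|\nabla u(x,t')|^2\,\mathrm d x\,\mathrm d t',
\qquad \dfrac{\mu'(t'/\eps)}{\eps} = -\dfrac{2\rho}{\eps}\sin(t'/\eps).
\end{align*}
Bounding this naively (Grönwall) only gives a factor $\rho T/\eps$, which is not small. The gain has to come from the oscillation of $\sin(t'/\eps)$ against the slowly varying part of $|\nabla u|^2$. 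I would expose that slowly varying part through the modulated Fourier expansion \eqref{eq:modul-Fourier} with $K=1$:
\begin{align*}
u = z_0 + z_1 e^{it/\eps} + z_{-1}e^{-it/\eps} + R_1 .
\end{align*}
Here the $z_k=z_k^1$ solve \eqref{eq:MFE} on $[0,T]$ with $f$ extended by zero. By Remark~\ref{rem:symmetry}, $z_{-1}=\overline{z_1}$. By Proposition~\ref{prop:decay-zK} with the spatial regularity $\Delta f\in H^2_0$, we have $\|z_{\pm1}\|_1 = \mathcal O(\rho\eps)$. By Proposition~\ref{prop:error-bound-mfe}, $\|R_1\|_1=\mathcal O(\rho\eps^{2})$. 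All of these constants carry the factor $e^{CT\rho/\eps}$.

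Next I insert the expansion into $|\nabla u|^2$ and split the integral. The dominant term is
\begin{align*}
-\dfrac{2\rho}{\eps}\int_{\widetilde T}^{T}\sin(t'/\eps)\,\|\nabla z_0(t')\|^2\,\mathrm d t' .
\end{align*}
I will integrate by parts in time using $\sin(t'/\eps)/\eps = -\partial_{t'}\cos(t'/\eps)$. This gives boundary terms $2\rho[\cos(t'/\eps)\|\nabla z_0\|^2]_{\widetilde T}^{T}$ plus $-2\rho\int\cos(t'/\eps)\,\partial_{t'}\|\nabla z_0\|^2$. Both are $\mathcal O(\rho)$, because $z_0$ and $\partial_t z_0$ are bounded in $H^1_0$ uniformly in $\eps$ by Proposition~\ref{prop:well-posedness-td} and its Sobolev-embedding consequence. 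That step uses the temporal regularity $f\in H^2_0$, which is guaranteed.

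The mixed terms $2\Re\langle\nabla z_0,\nabla z_{\pm1}\rangle e^{\pm it/\eps}$ multiplied by $\rho\sin(t/\eps)/\eps$ are bounded crudely. Since $\|z_{\pm1}\|_1=\mathcal O(\rho\eps)$, they contribute $(\rho/\eps)\cdot\rho\eps\cdot T = \mathcal O(\rho^2 T)$. Quadratic terms in $z_{\pm1}$ are even smaller, and terms containing $R_1$ contribute at most $(\rho/\eps)\cdot\rho\eps^2 T$. Alternatively, the resonant mixed products, which contain non-oscillating pieces such as $\sin\cdot e^{it/\eps}$, can simply be bounded directly, since they already carry the factor $\rho\eps$. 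Collecting everything gives $|\mathcal E(T)-\mathcal E(\widetilde T)|\le C\rho\, e^{CT\rho/\eps}$, with $C$ depending on $T$, $\Omega$ and norms of $f$.

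The main obstacle is making sure the coefficient bounds are uniform in $\eps$ on $[0,T]$ for a general $T$. Theorem~\ref{thm:mfe} is stated under $T<\eps/(4\rho)$. The later results, however, carry the factor $e^{4T\rho/\eps}$ instead, and I would use those versions and absorb the factor into $e^{CT\rho/\eps}$. I also need to check that extending $f$ by zero beyond $\widetilde T$ keeps $\Delta f\in H^2_0(0,T)$. This holds if $f$ and its derivatives vanish at $\widetilde T$, which I will read into the assumption "supported in $[0,\widetilde T]$".
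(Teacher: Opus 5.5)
Your proposal is correct and follows essentially the paper's own argument. You insert the $K=1$ expansion into the energy identity and use the temporal regularity of $z_0$ to show that the oscillatory term $\frac{\rho}{\eps}\int\sin(t/\eps)\|\nabla z_0\|^2$ is $\mathcal O(\rho)$; you carry out the integration by parts explicitly, where the paper only asserts it. You then bound the resonant mixed terms and the remainder through $\|z_{\pm1}\|_1=\mathcal O(\rho\eps)$ and Proposition~\ref{prop:error-bound-mfe}, absorbing all polynomial factors into $e^{CT\rho/\eps}$.

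One caveat, shared with the paper's proof: Proposition~\ref{prop:decay-zK} at $|k|=1$ and Proposition~\ref{prop:error-bound-mfe} with $K=1$ actually require $\Delta^{2}f\in H^2_0(0,T;L^2(\Omega))$, not merely $\Delta f\in H^2_0$, so as written your constant depends on that stronger norm of $f$. Under the stated hypothesis you can instead use the weaker bounds $\|z_{\pm1}\|_1=\mathcal O(\rho)$ and $\|R_1\|_1=\mathcal O(\rho\eps)$. These still suffice, because $\rho^2T/\eps=\rho\cdot\rho T/\eps$ is absorbed by the exponential.
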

\begin{proof}
Inserting the modulated Fourier expansion into the energy identity gives, after applying the error bound of Proposition~\ref{prop:error-bound-mfe}, the estimate
		\begin{align*}
			&\mathcal E(\widetilde T)- \mathcal E (T)
			= 
			\int_T^{\widetilde T} \int_\Omega \left( \dfrac{d}{dt} \mu(t/\eps) \right) \left|\nabla u (t)\right|^2  \mathrm d x \,
 \mathrm d t	
  =  \sum_{j,k=-1}^1
 			\int_T^{\widetilde T}  \int_\Omega  \dfrac{\mu'(t/\eps)}{\eps} e^{i(j-k)t/\eps} \nabla \overline{z}_k^{K}\cdot \nabla z_j^K \mathrm d x  \mathrm d t
            +\mathcal O \left(\rho^3\eps\right)
            .
    \end{align*}
   The derivative of the modulation \eqref{eq:ref-mu-cos} has the form 
   $$\dfrac{\mu'(t/\eps)}{\eps} = -\dfrac{\rho}{\eps} \sin (t/\eps) = \dfrac{\rho}{\eps} \dfrac{ e^{-it/\eps}-e^{it/\eps} }{2i}.$$
Inserting this expression into the right-hand side gives
    \begin{align*}
\mathcal E(\widetilde T)- \mathcal E (T) & = \dfrac{\rho}{ \eps} \sum_{j,k=-1}^1
 			\int_T^{\widetilde T}  \int_\Omega   \dfrac{e^{i(j-k- 1)t/\eps} -e^{i(j-k+ 1)t/\eps}}{i} \nabla \overline{z}_k^K\cdot \nabla z_j^K \, \mathrm d x  \mathrm dt 
            +\mathcal O \left(\rho^3\eps \right).
\end{align*}
Each summand containing a highly oscillatory factor in the integrand is of the order $\mathcal{O}(\rho)$, due to the high temporal regularity of the coefficient functions $z_j^K$ shown in Proposition~\ref{prop:well-posedness-td}. We therefore consider now only those terms in the leading order that contain highly oscillatory terms and include all other terms in the remainder term, which gives
\begin{align*}
\left|\mathcal E(\widetilde T)- \mathcal E (T) \right| &= \dfrac{\rho}{\eps} \left|
 \sum_{k=-K}^{K-1}
 			\int_T^{\widetilde T}  \int_\Omega    \nabla \overline{z}_k^K\cdot\nabla z^K_{k+1}
            -\nabla \overline{z}_k^K\cdot \nabla z^K_{k-1}\, \mathrm d x \, \mathrm dt  \right|
            +\mathcal O \left(\rho \right)
\\& \le  i\dfrac{\rho}{\eps} 
 \sum_{k=-1}^{1}
 			\int_T^{\widetilde T}   \left\|z_k^K\right\|_1 \left\|z^K_{k+1}\right\|_1+\left\|z_k^K\right\|_1  \left\|z^K_{k-1}   \right\|_1
         \mathrm dt  
            +\mathcal O \left(\rho \right).
\end{align*}
The statement is then given by inserting the decay bounds \eqref{eq:decay-rate-td} of the coefficients $z_k^K$.
\end{proof}

\subsection{Invariant of the MFE}
We consider the coupled system for $f=0$, which reads for all $-K\le k\le K$:
\begin{align}\label{eq:coupled-system-f-0}
 (\partial_t+ ik/\varepsilon)^2 z^K_k  - \Delta z^K_k-\rho\Delta (z^K_{k-1}+z^K_{k+1}) =  0.
 \end{align}
For the sake of this section, we assume that the coefficients either fulfill some nontrivial initial conditions (which are symmetric in the sense of Remark~\ref{rem:symmetry}), or solve the dynamical system with some $f$ that vanishes on the time interval of interest. As long as the excitation vanishes, the system has a conserved property, which is explored in the following propostion.

 \begin{proposition}\label{prop:conservation}
The solution of the coupled system \eqref{eq:coupled-system-f-0}, with vanishing right-hand side $f$, conserves the following quantity $E\in \mathbb R$, namely the expression
\begin{align*}
E = \sum_{k=-K}^K \|\dot{z}^K_k \|^2 - \dfrac{k^2}{\varepsilon^2}\|z^K_k \|^2
+\|z^K_k \|_1^2
 +\rho \, \Re\left( \nabla z^K_k, \nabla z^K_{k-1} \right)
\end{align*}
is constant.
 \end{proposition}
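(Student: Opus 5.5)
The plan is the standard energy argument for second-order systems. Test the $k$-th equation of \eqref{eq:coupled-system-f-0} with $\dot z^K_k$ in the $L^2(\Omega)$ inner product, take real parts, and sum over $-K\le k\le K$. This produces an exact time derivative that vanishes. Rigorously, the computation needs $z^K\in C^2([0,T];H_0^1(\Omega)^{2K+1})$ with $\Delta z^K_k(t)\in L^2(\Omega)$, or at least enough regularity that $\Delta z^K_k$ can be paired with $\dot z^K_k\in H_0^1(\Omega)$ via Green's formula. This regularity follows from Proposition~\ref{prop:well-posedness-td} applied to smooth data. For the setting with nontrivial initial data, a density argument transfers the identity from smooth to general solutions.

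First, expand the operator: $(\partial_t+ik/\eps)^2 z_k = \ddot z_k + \tfrac{2ik}{\eps}\dot z_k - \tfrac{k^2}{\eps^2} z_k$. After testing with $\dot z_k$ and taking real parts, the individual terms behave as follows:
\begin{itemize}
\item $\Re(\dot z_k,\ddot z_k)=\tfrac12\tfrac{d}{dt}\|\dot z_k\|^2$;
\item $\Re\big(\tfrac{2ik}{\eps}\|\dot z_k\|^2\big)=0$, since this term is purely imaginary, which is the key cancellation of the gyroscopic term;
\item $-\tfrac{k^2}{\eps^2}\Re(\dot z_k,z_k)=-\tfrac{k^2}{2\eps^2}\tfrac{d}{dt}\|z_k\|^2$;
\item after integration by parts with the homogeneous Dirichlet conditions, $\Re(\dot z_k,-\Delta z_k)=\tfrac12\tfrac{d}{dt}\|z_k\|_1^2$.
\end{itemize}

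Second, treat the coupling terms. After integration by parts, these contribute
\[
\rho\sum_{k}\Re\big[(\nabla\dot z_k,\nabla z_{k-1})+(\nabla\dot z_k,\nabla z_{k+1})\big].
\]
Reindex the second sum by $k\mapsto k-1$, using the truncation convention $z_{\pm(K+1)}=0$ so that no boundary terms in $k$ survive. Then use $\Re(a,b)=\Re(b,a)$. The two sums combine into
\[
\rho\sum_k\Re\big[(\nabla\dot z_k,\nabla z_{k-1})+(\nabla z_k,\nabla\dot z_{k-1})\big]=\rho\,\tfrac{d}{dt}\sum_k\Re(\nabla z_k,\nabla z_{k-1}).
\]
This telescoping or pairing step is the one point requiring care. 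It relies on the symmetry of the coupling: the coefficients of $z_{k-1}$ and $z_{k+1}$ are equal and real, and the operator $\Delta$ is self-adjoint.

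Summing all contributions gives $\tfrac{d}{dt}\big[\tfrac12\sum_k(\|\dot z_k\|^2-\tfrac{k^2}{\eps^2}\|z_k\|^2+\|z_k\|_1^2)+\rho\sum_k\Re(\nabla z_k,\nabla z_{k-1})\big]=0$. Multiplying by $2$ yields a conserved quantity of the stated form. I expect a discrepancy in the coupling constant: this computation gives the factor $2\rho$ in front of the mixed term after normalising the other terms as in the statement. I would therefore verify the normalisation carefully, and if the computation confirms it, state $E$ with coefficient $2\rho$; either way the quantity is conserved. Realness of $E$ is immediate since every term is a norm or a real part.

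The main obstacle is not algebraic but the justification of the regularity needed to differentiate under the inner products. I would handle it by first proving the identity for data $f$ smooth in time and vanishing on the interval of interest, where Proposition~\ref{prop:well-posedness-td} gives arbitrary temporal regularity in $H_0^1(\Omega)$, and then extending by continuity.
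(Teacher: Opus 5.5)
Your proof is correct and uses the same energy argument as the paper: test the $k$-th equation with $\dot z^K_k$, take real parts so that the gyroscopic term $\tfrac{2ik}{\eps}\|\dot z^K_k\|^2$ drops out, integrate by parts, and shift the index in the coupling sum to obtain $\tfrac{\mathrm d}{\mathrm dt}\Re\sum_k(\nabla z^K_k,\nabla z^K_{k-1})$. Your factor $2\rho$ is also right, and you should state it without hedging. The computation, including the paper's own, gives $\tfrac{\mathrm d}{\mathrm dt}\bigl[\tfrac12\sum_k(\|\dot z^K_k\|^2-\tfrac{k^2}{\eps^2}\|z^K_k\|^2+\|z^K_k\|_1^2)+\rho\,\Re\sum_k(\nabla z^K_k,\nabla z^K_{k-1})\bigr]=0$. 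Hence the printed $E$ with coefficient $\rho$ is not conserved in general, since its derivative is $-\rho\,\tfrac{\mathrm d}{\mathrm dt}\Re\sum_k(\nabla z^K_k,\nabla z^K_{k-1})$; drop the ``either way'' and record the conserved quantity with coefficient $2\rho$.
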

\begin{proof}

Testing the $k$-th component of \eqref{eq:MFE} with $\dot{z}^K_k = \partial_t z^K_k$ and the summation over all $k$ gives 
 \begin{align}\label{eq:MFE-tested}
 \sum_{k=-K}^K \left(\dot{z}^K_k,(\partial_t+ ik/\varepsilon)^2z^K_k \right)  +\left(\nabla \dot{z}^K_k, \nabla z^K_k\right)+\rho\left( \nabla \dot{z}^K_k, \nabla (z^K_{k-1}+z^K_{k+1}) \right) =  0.
 \end{align}
For the real part of each of the summands of the first component we obtain
\begin{align*}
\Re \left(\dot{z}^K_k,(\partial_t+ ik/\varepsilon)^2z^K_k \right)
=
\Re \left(\dot{z}^K_k,\partial_t^2z^K_k \right)
-
\Re \dfrac{k^2}{\varepsilon^2} \left(\dot{z}^K_k,z^K_k \right)
= \dfrac{1}{2} \dfrac{\mathrm d}{\mathrm d t} \left( \|\dot{z}^K_k \|^2 - \dfrac{k^2}{\varepsilon^2} \|z^K_k \|^2
\right).
\end{align*}
For the second summands of \eqref{eq:MFE-tested}, we obtain
\begin{align*}
    \left(\nabla \dot{z}^K_k, \nabla z^K_k\right)
    =
\dfrac{1}{2}\dfrac{\mathrm d}{\mathrm d t}\left\|\nabla z^K_k \right\|^2.
\end{align*}
Finally, for the third sum, we first use an index shift to obtain
\begin{align*}
 & \Re \sum_{k=-K}^K \left( \nabla \dot{z}^K_k, \nabla (z^K_{k-1}+z^K_{k+1}) \right)
=
 \Re\sum_{k=-K}^K \left( \nabla \dot{z}^K_k, \nabla z^K_{k-1} \right)
 +
 \left(  \nabla z^K_{k+1},\nabla \dot{z}^K_k\right)
 \\
 &=\
 \Re\sum_{k=-K+1}^K \left( \nabla \dot{z}^K_k, \nabla z^K_{k-1} \right)
 +
 \left(  \nabla z^K_{k},\nabla \dot{z}^K_{k-1}\right)
=
 \dfrac{\mathrm d}{\mathrm d t} 
 \Re\sum_{k=-K+1}^K \left( \nabla z^K_k, \nabla z^K_{k-1} \right).
\end{align*}
Note that we use the notation $z^K_{-K-1}=z^K_{K+1} = 0$.
\end{proof}
\begin{remark}
When $\eps$ is sufficiently large and $\rho$ sufficiently small, the second and fourth terms of the conservation law in Proposition~\ref{prop:conservation} can be absorbed. The conservation law of Proposition~\ref{prop:conservation} then implies that the solution of the coupled dynamical system \eqref{eq:MFE} stays bounded for arbitrary long times.
\end{remark}


\section{Time-discretization of coupled dynamic system}\label{sect:cq}
Since the functions $z^K$ and their derivatives are temporally regular (for sufficiently smooth right-hand sides), it is natural to approximate them instead of the oscillatory solution $u$ of the time-modulated acoustic wave equation. 
Let $t_n = n \tau$ and $z^{K,n}_k \approx z^K_k(t_n)$.
The trapezoidal rule applied to the coupled system \eqref{eq:MFE} then
reads
\begin{align}\label{eq:tr-mfe}
\begin{aligned}
   \frac{z^{K,n+1}_k - 2 z^{K,n}_k + z^{K,n-1}_k}{\Delta t^2}
+ \frac{2 i k}{\varepsilon}\frac{z^{K,n+1}_k - z^{K,n}_k}{\Delta t}
- \frac{k^2}{\varepsilon^2}\frac{z^{K,n+1}_k + z^{K,n}_k}{2}
  &\\
- \Delta \frac{z^{K,n+1}_k + z^{K,n}_k}{2}
- \dfrac{\rho}{2} \Delta \left(
z^{K,n+1}_{k-1} + z^{K,n}_{k-1}
+ z^{K,n+1}_{k+1} + z^{K,n}_{k+1}
\right)
&=
\delta_{k0}\,\frac{f(t_{n}) + f(t_{n+1})}{2}. 
\end{aligned}
\end{align}
Classical time stepping schemes of this type are equivalent to the convolution quadrature method applied to system \eqref{eq:trunc-sys-td}, (see e.g. \cite{L94}). Combining standard theory with the Laplace domain bounds of Section~\ref{sect:Lap} is thus a convenient and natural path to error bounds of the approximation. The following section briefly introduces these techniques to show error bounds for the above and similar time stepping schemes.

\subsection{Time stepping as convolution quadrature}
Convolution quadrature methods are extensions of classical time stepping scheme, which discretize the temporal convolution \eqref{Heaviside} by a discrete convolution with stepsize $\tau>0$. For a detailed introduction to such methods and their original derivation for wave-type problems we refer the reader to \cite{L94}. 
The approximation reads
\begin{align*}
	\left( K(\partial_t^\tau)g\right) (t):= \sum_{j\ge 0} \omega_j \, g(t-j\tau),
\end{align*}
which is defined for $0\le t \le T$, but usually considered only on the grid $t_n=n\tau$, so that only grid values of $g$ appear on the right-hand side.
The convolution quadrature weights are defined as the coefficients of the generating power series
\begin{align}\label{eq:cq-weights}
	\sum_{j=0}^{\infty}\omega_j \, \zeta^j:=K\left(\dfrac{\delta(\zeta)}{\tau}\right), 
\end{align}
where we choose $\delta(\zeta)$ as the generating polynomial of the trapezoidal rule 
$$\delta(\zeta) = \dfrac{2(1-\zeta)}{1+\zeta}.$$
The method is A-stable, namely we have $\Re \delta(\zeta)\ge 0$ for $|\zeta|\le 1$, which ensures that $K$ is only evaluated in its domain of analyticity when computing the weights \eqref{eq:cq-weights}.
A key property of this type is a discrete operational calculus: just as $K(\partial_t)L(\partial_t)g=(KL)(\partial_t)g$, 
we also have the composition rule
\begin{align}\label{eq:composition-rule-tau}
K(\partial_t^\tau)L(\partial_t^\tau)g=(KL)(\partial_t^\tau)g.
\end{align}

\subsection{Discretized coupled system}
Applying the time-discretization to the coupled system \eqref{eq:trunc-sys-td} then gives the temporally discrete scheme
\begin{align}\label{eq:trunc-sys-td-tau}
 (D_K^2 (\partial^\tau_t) + \mathcal{T}_\Delta )  z_\tau^K 
 &=  f^K,
\end{align}
By the composition rule \eqref{eq:composition-rule-tau}, this scheme is equivalent to invert the operator in the left-hand side, since the existence of the inverse of the Laplace domain counterpart is guaranteed by Proposition~\ref{prop:well-posedness-spatially-variable}. An explicit form of the approximation is therefore given by
\begin{align}\label{eq:trunc-sys-td-tau-inv}
  z_\tau^K 
 &=   (D_K^2 (\partial^\tau_t) + \mathcal{T}_\Delta )^{-1}f^K,
\end{align}
where the right-hand side is the convolution quadrature approximation to the resolvent of Equation~\ref{eq:trunc-sys-inv-td}.
\begin{remark}[Equivalence of formulations]
Comparing the generating functions of the scheme \eqref{eq:trunc-sys-td-tau} and the scheme \eqref{eq:tr-mfe} shows their mathematical equivalence. Computationally, the methods are different. The classical scheme \eqref{eq:tr-mfe} is (in combination with a standard space discretization) straightforward to implement and leads to a time-marching code that sequentially solves the approximations to the time steps $z^K(t_n))$, where the same linear system has to be inverted for many different right-hand sides. Alternatively, the scheme can be implemented via \eqref{eq:trunc-sys-td-tau-inv}, as a convolution quadrature method, as is discussed e.g. in \cite{B10}. Here, the Laplace domain system \eqref{eq:trunc-sys-th} has to be approximated at $\mathcal{O}(N)$ different values for $s\in\mathbb C_+$, which can be done in parallel but has some computational overhead.
\end{remark}
Error bounds now directly follow from the Laplace domain bounds and general convolution quadrature approximation results (i.e. \cite[Theorem A.2]{B10}).

\begin{theorem}\label{th:error-bounds-tau}
Consider the temporal coupled system \eqref{eq:trunc-sys-td} (or its multimodal counterpart \eqref{eq:general-mfe-sys}) and its trapezoidal rule discretization \eqref{eq:trunc-sys-td-tau} at a time $t_n=n\tau$, which fulfills $t_n \rho <C\eps$. Then, when the right-hand side $f$ fulfills sufficient temporal regularity, we have the estimate 
\begin{align}
\left\| z^K(t_n)- (z^K_\tau )_n  \right\|_1 \le C \tau^2 \| f\|_{H^4_0(0,T;L^2(\Omega))}.
\end{align}
The constant $C$ depends on the final time $T$ and the modulation $\mu$.
\end{theorem}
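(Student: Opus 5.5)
The estimate follows from the general convolution quadrature error bound \cite[Theorem A.2]{B10} (equivalently the trapezoidal-rule result in \cite{L94}), applied to the analytic family
\begin{align*}
K(s) = \left(D_K^2(s)+\mathcal{T}_\Delta\right)^{-1}, \qquad V = L^2(\Omega) \to W = V_K.
\end{align*}
Here the right-hand side is embedded as $f^K$.

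By the composition rules \eqref{comp-rule} and \eqref{eq:composition-rule-tau}, we have $z^K = K(\partial_t) f^K$ and $z^K_\tau = K(\partial_t^\tau) f^K$. The error is therefore exactly the convolution quadrature error for $K$ applied to $f^K$.

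The first step is to verify the polynomial bound \eqref{eq:pol_bound} for $K(s)$ in a half-plane. By Proposition~\ref{prop:well-posedness}, for $\Re s\ge\sigma>\sigma_0:=4\rho/\eps$,
\begin{align*}
\|\nabla K(s) \widehat f^K\| \le \frac{2}{\Re s}\|\widehat f\|.
\end{align*}
This gives \eqref{eq:pol_bound} with $\kappa=0$, $\nu=1$ and $M_\sigma=2$. For the multimodal system \eqref{eq:general-mfe-sys} the same holds from Proposition~\ref{prop:well-posedness-spatially-variable}, with $\sigma_0=c\rho/\eps$ and $M_\sigma=\sqrt{C_1}$.

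The second step is to check that the generating function lands in the domain of analyticity. The trapezoidal rule is A-stable, but $K$ is only analytic for $\Re s>\sigma_0$, not on all of $\Re s>0$. The standard remedy is the exponential shift. Write $\sigma=1/T$ plus $\sigma_0$, and consider the shifted family $K(s+\sigma_0)$ applied to $e^{-\sigma_0 t}f$. Alternatively, invoke the version of \cite[Theorem A.2]{B10} stated for families analytic in $\Re s>\sigma_0$ with step size restriction $\tau\sigma_0\le c$. In that version the error constant carries a factor $e^{\sigma_0 t_n}$, which equals $e^{c\,t_n\rho/\eps}$. This factor is bounded precisely because of the hypothesis $t_n\rho< C\eps$. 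This is also where the $\tau$-independence from $\eps$ comes from: $\sigma_0 \tau = 4\rho\tau/\eps$ must stay small, which again reduces to the smallness of $\rho/\eps$.

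I expect this shift to be the main obstacle. The trapezoidal rule does not commute exactly with the exponential shift, so one must either appeal to the version of the theorem already formulated for $\Re s>\sigma_0$, or compare $\delta(e^{-\sigma_0\tau}\zeta)$ with $\delta(\zeta)+\sigma_0\tau$ and absorb the discrepancy.

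Once the hypotheses are verified, the theorem for order $p=2$ gives
\begin{align*}
\|z^K(t_n)-(z^K_\tau)_n\|_{1} \le C\tau^2 \|f\|_{H^{r}_0(0,T;L^2(\Omega))},
\end{align*}
for $r=p+\kappa+1+\alpha$ with a little room, so $r=4$ suffices since $\kappa=0$. The constant depends on $M_\sigma$, $T$ and $e^{\sigma_0 T}$. By the assumptions it is bounded independently of $\eps$ and $\rho$, and for the general modulation it depends on $\mu$ through $C_1$ and $c$. The required vanishing of $f$ and its derivatives at $t=0$ is encoded in $f\in H^4_0$.
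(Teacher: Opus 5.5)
Your proposal is correct and follows essentially the same route as the paper. The paper likewise combines the Laplace-domain resolvent bound (Proposition~\ref{prop:well-posedness-spatially-variable}, or Proposition~\ref{prop:well-posedness} for the cosine modulation) with the trapezoidal convolution quadrature error result \cite[Theorem A.2]{B10}, applied to \eqref{eq:trunc-sys-td-tau-inv}. Your discussion of analyticity only in $\Re s>\sigma_0$, and of the resulting factor $e^{\sigma_0 t_n}$ controlled by $t_n\rho<C\eps$, makes explicit what the paper leaves implicit in that citation.
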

\begin{proof}
The proof is the direct consequence of the Laplace domain bound of Proposition~\ref{prop:well-posedness-spatially-variable} and the general approximation result of the convolution quadrature method based on the trapezoidal rule \cite[Theorem A.2]{B10}, which is applied to the approximation \eqref{eq:trunc-sys-td-tau-inv}.
\end{proof}
\begin{remark}[Higher-order methods and error bounds]
Due to the high temporal regularity of the coefficients $z^K$, which is ensured by Proposition~\ref{prop:well-posedness-td}, higher-order discretizations outperform the trapezoidal rule, as long as the right-hand side $f$ has sufficient temporal regularity and $\tau$ is sufficiently small. As the Dahlquist order barrier prohibits A-stable discretizations beyond order $2$, multistage methods are the natural class of interest and often outperform multistep-based schemes, in particular in the context of convolution quadrature methods (see \cite{B10}). Due to the Laplace domain analysis, error bounds for a general class of methods are the direct consequence of repeating the argument of Theorem~\ref{th:error-bounds-tau} with the appropriate general approximation result, where \cite[Theorem~3]{BLM11} can be used for stiffly accurate Runge--Kutta methods (such as the Radau IIA methods) or \cite{BF24} for Gauss based methods.
\end{remark}


\section{Numerical experiments}\label{sect:numerics}
The codes that were used to conduct the experiments and generate the plots are distributed via github\footnote{ \href{https://github.com/joergnick/ModulatedFourierExpansion}{https://github.com/joergnick/ModulatedFourierExpansion}, last accessed on 05.02.2026.}.
Our numerical investigations are based on the one-dimensional acoustic wave equation on the domain $\Omega=(0,1)$, with homogeneous Dirichlet boundary conditions. For the modulation, we restrict our attention to the spatially homogeneous modulation \eqref{eq:ref-mu-cos}. The system is excited by a balanced source on the right-hand side of the form
\begin{align}\label{eq:source}
f(x,t) = g(x,t-t_0)-g(x,t-t_0-0.1),
\end{align}
with the spatio-temporal gaussian $$g(x,t) = e^{-100(x-0.5)^2-10t^2} .$$ The opposite signs of the excitation has been used to excite a small wave packet around the time $t_0 = 1$ and the simulation observes the behavior of the wave equation until the final time $T=4$. We use the standard finite element method to discretize the spatial integrator $-\Delta$, with $1000$ degrees of freedom. As the error measure, we compute the $L^2-$norm in time and space, which reads
\begin{align*}
\text{ERR} = \left(\sum_{n=0}^N \|e_n\|^2\right)^{\frac{1}{2}}.
\end{align*}

\emph{Error behavior of time-integrator.} Figure~\ref{fig:time-convergence} shows the time convergence of two different integrators: firstly, the error of the standard trapezoidal rule applied to the time-modulated acoustic wave equation \eqref{eq:ac-intro}. As expected, the method is convergent only if the step size $\tau$ is smaller than the scale of the temporal oscillation $\varepsilon$. Secondly, we consider the error of the approximation that is induced by approximating the coefficients $z^K$ with the trapezoidal rule, i.e. the approximation of Equation~\ref{eq:trunc-sys-td-tau}. The coupled system is truncated at $K=3$, which ensures a truncation error that is sufficiently small to observe the convergence of the trapezoidal rule applied to the coupled dynamical system. To compute the error, we use a reference solution that has been computed with $N=2^{14}$ timesteps and the same space discretization.

\begin{figure}[h!]
	\centering 
	\includegraphics[scale=.75]{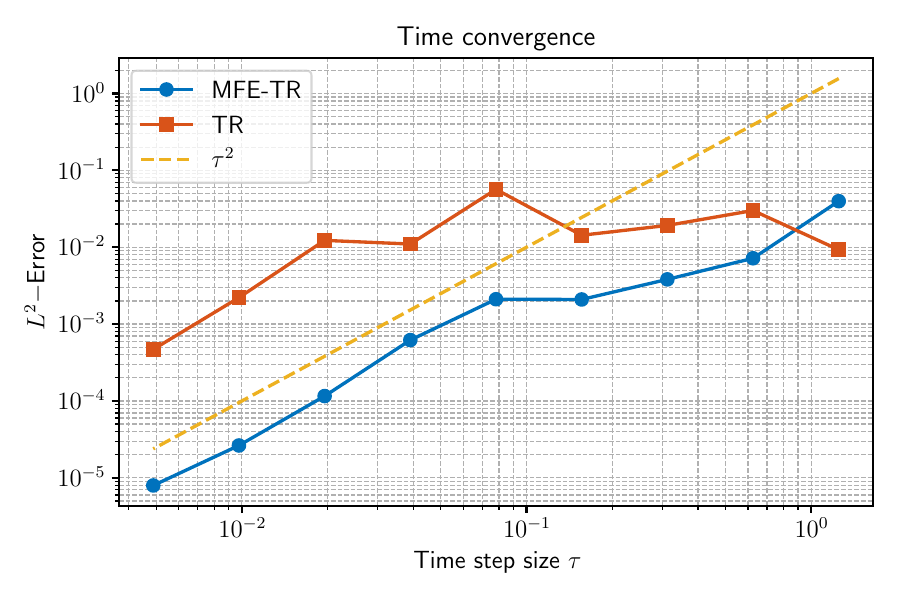}
	\caption{Time convergence error for $\rho = 0.4$ and $\eps = 0.04$, measured in the space time $L^2-$ norm in time and space. The solution is computed until the final time $T=5$ and the MFE is used with $K=3$. A finite difference solver is used for space discretization with $1000$ degrees of freedom.
    }
	\label{fig:time-convergence}
\end{figure}
\emph{Visualization of the solution.} Consider the solution $u$ in the same setting as before, namely with the regular right-hand side $f$ of \eqref{eq:source}, $\rho=0.4$ and $\eps=0.04$. Here, we use a space discretization with $500$ degrees of freedom, which is now combined by the trapezoidal time discretization for the dynamical coupled system that uses $N=2^8=256$ timesteps. Figure~\ref{fig:visualization} visualizes the approximation to the solution $u$ of the time-modulated acoustic wave equation. The solution displays oscillatory behavior, which becomes more apparent for the final time. We note that the visualization depicts the solution for larger times than are covered by the theory, as the constants are exponentially dependent on the factor $T\rho/\eps= 40$ at the final time. Nevertheless, the solution remains bounded to that time, for this specific example (Although some growth is observed in the example). The smoothly varying coefficient functions $z_k$ are visualized in Figure~\ref{fig:z_K-time}, which show their absolute value over time. 

\emph{Decay of the coefficient functions $z_k$.} The defect introduced by truncating the coupled dynamical system at a finite index $K$ is determined by the decay of the coefficient functions $z_k$.

Figure~\ref{fig:z_K} shows the decay for the regular right-hand side of \eqref{eq:source}, for several values of $\eps$ and $\rho$. As reference lines, we visualize the rates predicted by Theorem~\ref{thm:mfe}, namely $\rho^k\eps^k$. The negative indices $-k$ fulfill, due to the symmetry $z_{-k}=\overline{z}_k$, the same decay property. For all values of $\eps$ and $\rho$, we observe a remarkable agreement of the predicted theoretical results and the practical experiments. It should be noted that the theory does not imply a convergence result with respect to $K\rightarrow \infty$ (since the constants, e.g. in Theorem~ \ref{thm:mfe}, are not controlled with respect to $K$). 

High spatial regularity of the right-hand side $f$ is a critical component of the theory to show the high-order decay estimates of Theorem~\ref{thm:mfe}. To test the effect of a source with a low spatial regularity, we consider an excitation that is a step function in space at any time-point, which takes the form
\begin{align}\label{eq:f-low-reg}
    f(x,t) = 
\begin{cases}
    e^{-10(t-1)^2} ,&\text{for} \quad \frac{1}{4} \le x\ \  \text{or} \ \  x \le \frac{3}{4},
    \\   0 , &\text{else} .
\end{cases}
\end{align}
The decay of the values $z_k$ for this excitation of low spatial regularity (with $1000$ spatial degrees of freedom, $N=2^{12}=4096$ time steps until the final time $T=2$) is shown in Figure~\ref{fig:z_K_no_reg}. Remarkably, a much stronger decay than predicted by the theory is observed here (since the theoretical decay without any assumptions on the spatial regularity can only be estimated by Theorem~\ref{thm:mfe-low-reg}, predicting the exponential decay rate $\rho/\eps$). Nevertheless, the lack of high spatial regularity does affect the decay properties of $z_k$, whose exponential rate of decay slows down for larger $k$ notably compared to Figure~\ref{fig:z_K}.

\begin{figure}[h!] 
	\hspace*{2.2cm}
	\includegraphics[scale=.75]{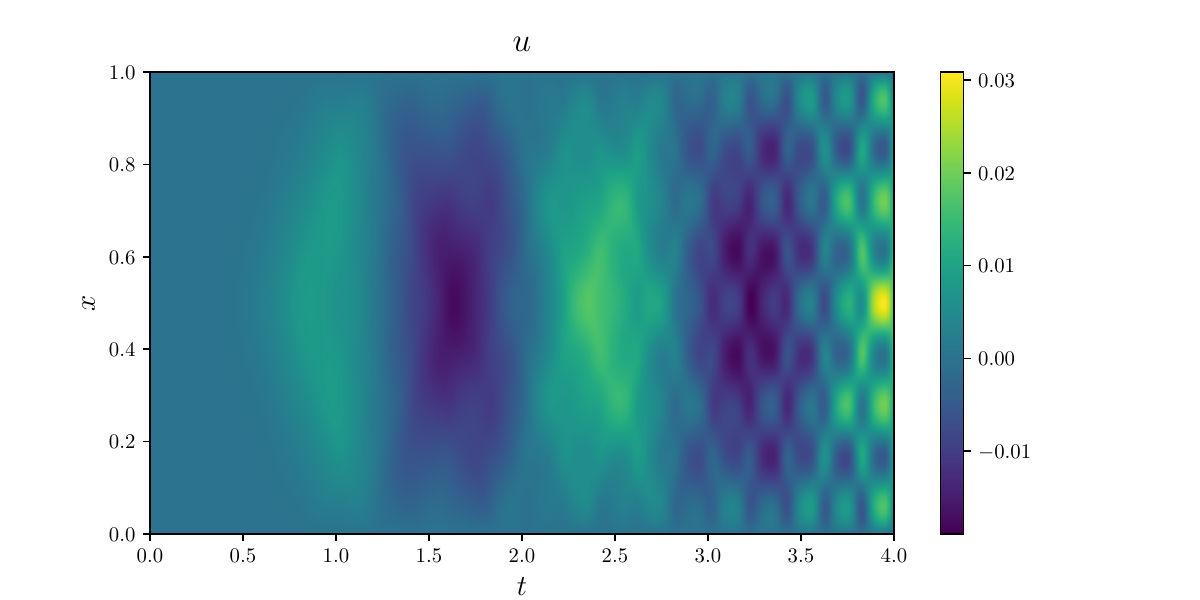}
	\caption{Visualization of $u$, with $500$ spatial degrees of freedom and $N=256$ time steps. The parameters were set to $\rho=0.4$ and $\eps=0.04$.}
	\label{fig:visualization}
\end{figure}
\begin{figure}[h!]
	\centering 
	\includegraphics[scale=.6]{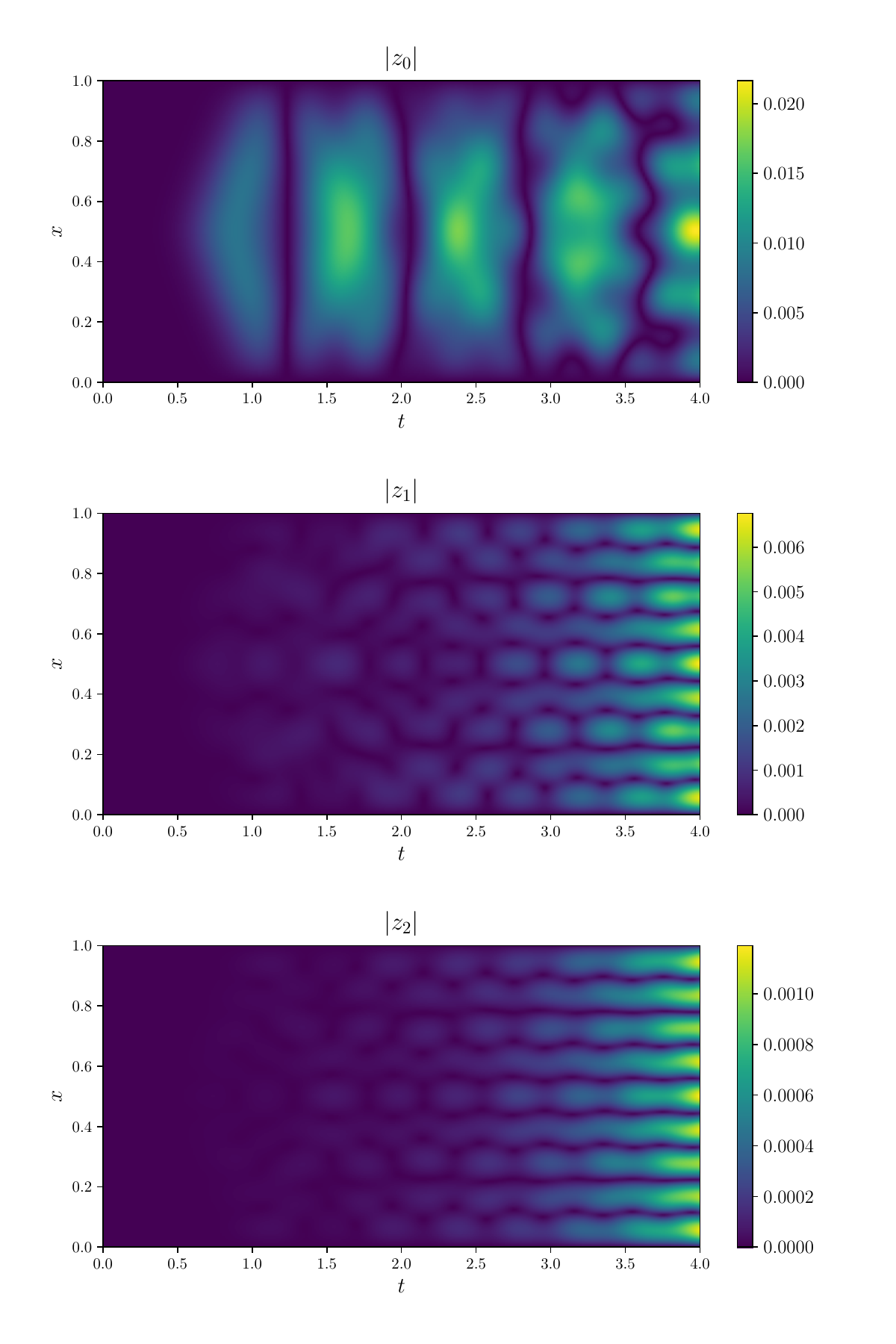}
	\caption{The absolute values of the coefficient functions $z_0,z_1$ and $z_2$ that were used to compute the approximation of Figure~\ref{fig:visualization}.}
	\label{fig:z_K-time}
\end{figure}

\begin{figure}[h!]
	\centering 
	\includegraphics[scale=.75]{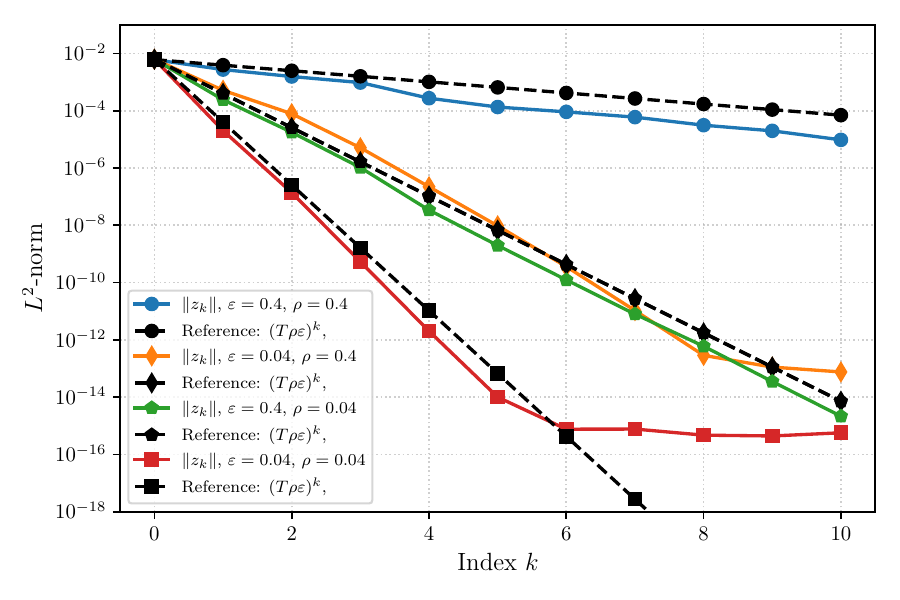}
	\caption{Spatio-temporal $L^2$-norm of the modulation functions $z_k$ for $k=0,\dots,K$ with $K=10$, for different values of $\eps$ and $\rho$, with the regular right-hand side $f$ of \eqref{eq:source}.}
	\label{fig:z_K}
\end{figure}

\begin{figure}[h!]
	\centering 
	\includegraphics[scale=.75]{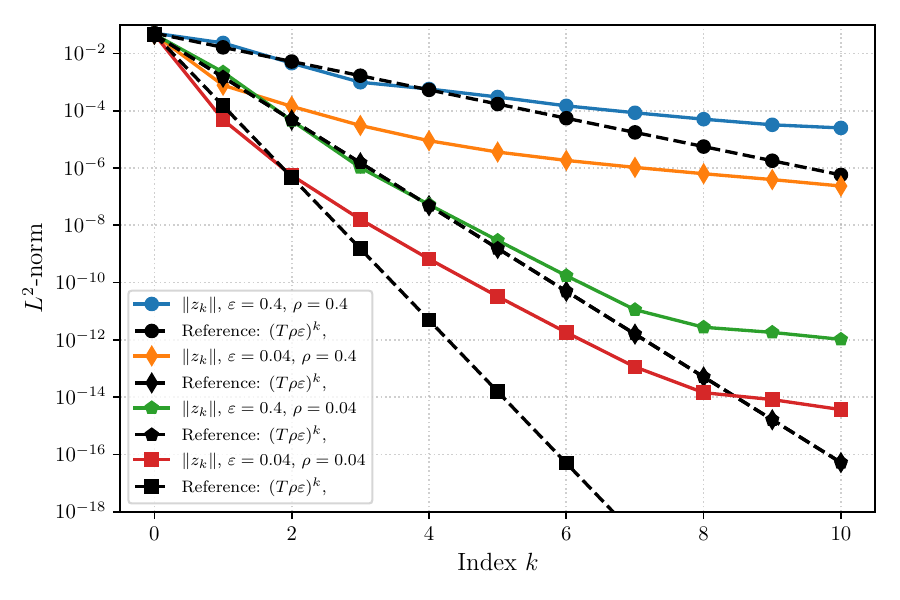}
	\caption{The same plot as Figure~\ref{fig:z_K}, but for the excitation $f$ described in Equation \ref{eq:f-low-reg}, with low spatial regularity. 
    }
	\label{fig:z_K_no_reg}
\end{figure}


\section*{Acknowledgement}
The author thanks Habib Ammari and Martin J. Gander for providing support and the freedom to the author to pursue this topic independently, as well as many helpful discussions. The author further thanks Christian Lubich for insightful discussions that introduced the author to modulated Fourier expansions. Finally, the author is grateful to Andreas Prohl and Bin Wang for helpful comments.





\bibliographystyle{abbrv}
\bibliography{Lit}

@article {CHL08,
    AUTHOR = {Cohen, David and Hairer, Ernst and Lubich, Christian},
     TITLE = {Long-time analysis of nonlinearly perturbed wave equations via
              modulated {F}ourier expansions},
   JOURNAL = {Arch. Ration. Mech. Anal.},
  FJOURNAL = {Archive for Rational Mechanics and Analysis},
    VOLUME = {187},
      YEAR = {2008},
    NUMBER = {2},
     PAGES = {341--368},
      ISSN = {0003-9527,1432-0673},
   MRCLASS = {35L70 (35B10 35B40)},
  MRNUMBER = {2366141},
MRREVIEWER = {Hideo\ Kubo},
       DOI = {10.1007/s00205-007-0095-z},
       URL = {https://doi.org/10.1007/s00205-007-0095-z},
}

@article {L94,
	AUTHOR = {Lubich, Ch.},
	TITLE = {On the multistep time discretization of linear
	initial-boundary value problems and their boundary integral
	equations},
	JOURNAL = {Numer. Math.},
	FJOURNAL = {Numerische Mathematik},
	VOLUME = {67},
	YEAR = {1994},
	NUMBER = {3},
	PAGES = {365--389},
	ISSN = {0029-599X,0945-3245},
	MRCLASS = {65M15 (65D30 65M60 65R20)},
	MRNUMBER = {1269502},
	MRREVIEWER = {Rudolf\ Gorenflo},
	DOI = {10.1007/s002110050033},
	URL = {https://doi.org/10.1007/s002110050033},
}

@article{TLAGC24,
author = {Touboul, Marie and Lombard, Bruno and Assier, Raphael C and Guenneau, Sebastien and Craster, Richard V},
title = {High-order homogenization of the time-modulated wave equation: non-reciprocity for a single varying parameter},
journal = {Proceedings of the Royal Society A: Mathematical, Physical and Engineering Sciences},
volume = {480},
number = {2289},
pages = {20230776},
year = {2024},
doi = {10.1098/rspa.2023.0776},

URL = {https://royalsocietypublishing.org/doi/abs/10.1098/rspa.2023.0776},
eprint = {https://royalsocietypublishing.org/doi/pdf/10.1098/rspa.2023.0776}
}

@article{YGA22,
	title={Floquet metamaterials},
	author={Yin, Shixiong and Galiffi, Emanuele and Al{\`u}, Andrea},
	journal={ELight},
	volume={2},
	number={1},
	pages={8},
	year={2022},
	publisher={Springer}
}

@article {ACH22,
	AUTHOR = {Ammari, Habib and Cao, Jinghao and Hiltunen, Erik Orvehed},
	TITLE = {Nonreciprocal wave propagation in space-time modulated media},
	JOURNAL = {Multiscale Model. Simul.},
	FJOURNAL = {Multiscale Modeling \& Simulation. A SIAM Interdisciplinary
	Journal},
	VOLUME = {20},
	YEAR = {2022},
	NUMBER = {4},
	PAGES = {1228--1250},
	ISSN = {1540-3459,1540-3467},
	MRCLASS = {35J05 (35C20 35P20 74J20)},
	MRNUMBER = {4500526},
	DOI = {10.1137/21M1449427},
	URL = {https://doi.org/10.1137/21M1449427},
}

@article{TK19,
  title={Floquet engineering of quantum materials},
  author={Oka, Takashi and Kitamura, Sota},
  journal={Annual Review of Condensed Matter Physics},
  volume={10},
  number={1},
  pages={387--408},
  year={2019},
  publisher={Annual Reviews}
}

@article {MV76,
    AUTHOR = {Miranker, W. L. and van Veldhuizen, M.},
     TITLE = {The method of envelopes},
   JOURNAL = {Math. Comp.},
  FJOURNAL = {Mathematics of Computation},
    VOLUME = {32},
      YEAR = {1978},
    NUMBER = {142},
     PAGES = {453--496},
      ISSN = {0025-5718,1088-6842},
   MRCLASS = {65L05 (34E15)},
  MRNUMBER = {494952},
MRREVIEWER = {W.\ S.\ Loud},
       DOI = {10.2307/2006158},
       URL = {https://doi.org/10.2307/2006158},
}

@article {HL00,
    AUTHOR = {Hairer, Ernst and Lubich, Christian},
     TITLE = {Long-time energy conservation of numerical methods for
              oscillatory differential equations},
   JOURNAL = {SIAM J. Numer. Anal.},
  FJOURNAL = {SIAM Journal on Numerical Analysis},
    VOLUME = {38},
      YEAR = {2000},
    NUMBER = {2},
     PAGES = {414--441},
      ISSN = {0036-1429,1095-7170},
   MRCLASS = {65L05 (34C15 37M99 65P10)},
  MRNUMBER = {1770056},
MRREVIEWER = {Riccardo\ Fazio},
       DOI = {10.1137/S0036142999353594},
       URL = {https://doi.org/10.1137/S0036142999353594},
}

@book {HLW02,
    AUTHOR = {Hairer, Ernst and Lubich, Christian and Wanner, Gerhard},
     TITLE = {Geometric numerical integration},
    SERIES = {Springer Series in Computational Mathematics},
    VOLUME = {31},
      NOTE = {Structure-preserving algorithms for ordinary differential
              equations},
 PUBLISHER = {Springer-Verlag, Berlin},
      YEAR = {2002},
     PAGES = {xiv+515},
      ISBN = {3-540-43003-2},
   MRCLASS = {65P10 (37J05 37M15 70-08 70H05)},
  MRNUMBER = {1904823},
MRREVIEWER = {Antonella\ Zanna},
       DOI = {10.1007/978-3-662-05018-7},
       URL = {https://doi.org/10.1007/978-3-662-05018-7},
}

@book {HW96,
    AUTHOR = {Hairer, E. and Wanner, G.},
     TITLE = {Solving ordinary differential equations. {II}},
    SERIES = {Springer Series in Computational Mathematics},
    VOLUME = {14},
   EDITION = {Second},
      NOTE = {Stiff and differential-algebraic problems},
 PUBLISHER = {Springer-Verlag, Berlin},
      YEAR = {1996},
     PAGES = {xvi+614},
      ISBN = {3-540-60452-9},
   MRCLASS = {65-02 (34A09 34A45 65-01 65Lxx)},
  MRNUMBER = {1439506},
       DOI = {10.1007/978-3-642-05221-7},
       URL = {https://doi.org/10.1007/978-3-642-05221-7},
}

@article {HW09,
    AUTHOR = {Hirosawa, Fumihiko and Wirth, Jens},
     TITLE = {Generalised energy conservation law for wave equations with
              variable propagation speed},
   JOURNAL = {J. Math. Anal. Appl.},
  FJOURNAL = {Journal of Mathematical Analysis and Applications},
    VOLUME = {358},
      YEAR = {2009},
    NUMBER = {1},
     PAGES = {56--74},
      ISSN = {0022-247X,1096-0813},
   MRCLASS = {35L15 (35L65)},
  MRNUMBER = {2527581},
MRREVIEWER = {Alexey\ V.\ Borovskikh},
       DOI = {10.1016/j.jmaa.2009.04.048},
       URL = {https://doi.org/10.1016/j.jmaa.2009.04.048},
}

@article {CD03,
    AUTHOR = {Colombini, Ferruccio and Del Santo, Daniele and Reissig,
              Michael},
     TITLE = {On the optimal regularity of coefficients in hyperbolic
              {C}auchy problems},
   JOURNAL = {Bull. Sci. Math.},
  FJOURNAL = {Bulletin des Sciences Math\'ematiques},
    VOLUME = {127},
      YEAR = {2003},
    NUMBER = {4},
     PAGES = {328--347},
      ISSN = {0007-4497,1952-4773},
   MRCLASS = {35L15 (35B30 35B65 35D10)},
  MRNUMBER = {1988632},
MRREVIEWER = {Florin\ Iacob},
       DOI = {10.1016/S0007-4497(03)00025-3},
       URL = {https://doi.org/10.1016/S0007-4497(03)00025-3},
}

@techreport{K58,
  title={The gyration of a charged particle},
  author={Kruskal, Martin},
  year={1958},
  institution={Princeton Univ., NJ Project Matterhorn}
}

@article {B10,
    AUTHOR = {Banjai, Lehel},
     TITLE = {Multistep and multistage convolution quadrature for the wave
              equation: algorithms and experiments},
   JOURNAL = {SIAM J. Sci. Comput.},
  FJOURNAL = {SIAM Journal on Scientific Computing},
    VOLUME = {32},
      YEAR = {2010},
    NUMBER = {5},
     PAGES = {2964--2994},
      ISSN = {1064-8275,1095-7197},
   MRCLASS = {65M38 (35L05 35L20 65L06 65M20)},
  MRNUMBER = {2729447},
MRREVIEWER = {Luigi\ Brugnano},
       DOI = {10.1137/090775981},
       URL = {https://doi.org/10.1137/090775981},
}

@incollection {RY00,
    AUTHOR = {Reissig, Michael and Yagdjian, Karen},
     TITLE = {Klein-{G}ordon type decay rates for wave equations with
              time-dependent coefficients},
 BOOKTITLE = {Evolution equations: existence, regularity and singularities
              ({W}arsaw, 1998)},
    SERIES = {Banach Center Publ.},
    VOLUME = {52},
     PAGES = {189--212},
 PUBLISHER = {Polish Acad. Sci. Inst. Math., Warsaw},
      YEAR = {2000},
   MRCLASS = {35L15 (35B45)},
  MRNUMBER = {1773102},
MRREVIEWER = {Tohru\ Ozawa},
}

@article {GH25,
    AUTHOR = {Goto, Kazunori and Hirosawa, Fumihiko},
     TITLE = {On the energy decay estimate for the dissipative wave equation
              with very fast oscillating coefficient and smooth initial
              data},
   JOURNAL = {J. Math. Anal. Appl.},
  FJOURNAL = {Journal of Mathematical Analysis and Applications},
    VOLUME = {548},
      YEAR = {2025},
    NUMBER = {1},
     PAGES = {Paper No. 129386, 36},
      ISSN = {0022-247X,1096-0813},
   MRCLASS = {35L15 (35B40)},
  MRNUMBER = {4867828},
MRREVIEWER = {Kenji\ Nishihara},
       DOI = {10.1016/j.jmaa.2025.129386},
       URL = {https://doi.org/10.1016/j.jmaa.2025.129386},
}

@article {RS05,
    AUTHOR = {Reissig, Michael and Smith, James},
     TITLE = {{$L^p$}-{$L^q$} estimate for wave equation with bounded time
              dependent coefficient},
   JOURNAL = {Hokkaido Math. J.},
  FJOURNAL = {Hokkaido Mathematical Journal},
    VOLUME = {34},
      YEAR = {2005},
    NUMBER = {3},
     PAGES = {541--586},
      ISSN = {0385-4035},
   MRCLASS = {35L05 (35B45 35L15)},
  MRNUMBER = {2186822},
MRREVIEWER = {Nasser-eddine\ Tatar},
       DOI = {10.14492/hokmj/1285766286},
       URL = {https://doi.org/10.14492/hokmj/1285766286},
}

@article {T07,
    AUTHOR = {Tarama, Shigeo},
     TITLE = {Energy estimate for wave equations with coefficients in some
              {B}esov type class},
   JOURNAL = {Electron. J. Differential Equations},
  FJOURNAL = {Electronic Journal of Differential Equations},
      YEAR = {2007},
     PAGES = {No. 85, 12},
      ISSN = {1072-6691},
   MRCLASS = {35L15 (35B45)},
  MRNUMBER = {2328686},
MRREVIEWER = {Silvano\ B.\ de Menezes},
}

@article {BF24,
    AUTHOR = {Banjai, Lehel and Ferrari, Matteo},
     TITLE = {Runge-{K}utta convolution quadrature based on {G}auss methods},
   JOURNAL = {Numer. Math.},
  FJOURNAL = {Numerische Mathematik},
    VOLUME = {156},
      YEAR = {2024},
    NUMBER = {5},
     PAGES = {1719--1750},
      ISSN = {0029-599X,0945-3245},
   MRCLASS = {65L06 (65M15 65R20)},
  MRNUMBER = {4811804},
MRREVIEWER = {Xiao\ Tang},
       DOI = {10.1007/s00211-024-01429-4},
       URL = {https://doi.org/10.1007/s00211-024-01429-4},
}

@article {BLM11,
    AUTHOR = {Banjai, Lehel and Lubich, Christian and Melenk, Jens Markus},
     TITLE = {Runge-Kutta convolution quadrature for operators arising in wave propagation},
   JOURNAL = {Numer. Math.},
  FJOURNAL = {Numerische Mathematik},
    VOLUME = {119},
      YEAR = {2011},
    NUMBER = {1},
     PAGES = {1--20},
      ISSN = {0029-599X,0945-3245},
   MRCLASS = {65D30 (65E05 65R10)},
  MRNUMBER = {2824853},
MRREVIEWER = {Michael\ J.\ Carley},
       DOI = {10.1007/s00211-011-0378-z},
       URL = {https://doi.org/10.1007/s00211-011-0378-z},
}

@inproceedings {GHL18,
    AUTHOR = {Gauckler, Ludwig and Hairer, Ernst and Lubich, Christian},
     TITLE = {Dynamics, numerical analysis, and some geometry},
 BOOKTITLE = {Proceedings of the {I}nternational {C}ongress of
              {M}athematicians---{R}io de {J}aneiro 2018. {V}ol. {I}.
              {P}lenary lectures},
     PAGES = {453--485},
 PUBLISHER = {World Sci. Publ., Hackensack, NJ},
      YEAR = {2018},
      ISBN = {978-981-3272-90-3; 978-981-3272-87-3},
   MRCLASS = {65P10 (37M15 81-04)},
  MRNUMBER = {3966736},
}

@article {G21,
    AUTHOR = {Garnier, Josselin},
     TITLE = {Wave propagation in periodic and random time-dependent media},
   JOURNAL = {Multiscale Model. Simul.},
  FJOURNAL = {Multiscale Modeling \& Simulation. A SIAM Interdisciplinary
              Journal},
    VOLUME = {19},
      YEAR = {2021},
    NUMBER = {3},
     PAGES = {1190--1211},
      ISSN = {1540-3459,1540-3467},
   MRCLASS = {78A48 (35Q61 35R60)},
  MRNUMBER = {4292306},
MRREVIEWER = {Zhiwei\ Fang},
       DOI = {10.1137/20M1377734},
       URL = {https://doi.org/10.1137/20M1377734},
}

@article {CCNM15,
    AUTHOR = {Chartier, Philippe and Crouseilles, Nicolas and Lemou,
              Mohammed and M\'ehats, Florian},
     TITLE = {Uniformly accurate numerical schemes for highly oscillatory
              {K}lein-{G}ordon and nonlinear {S}chr\"odinger equations},
   JOURNAL = {Numer. Math.},
  FJOURNAL = {Numerische Mathematik},
    VOLUME = {129},
      YEAR = {2015},
    NUMBER = {2},
     PAGES = {211--250},
      ISSN = {0029-599X,0945-3245},
   MRCLASS = {65M12 (35C20 35Q55 74Q10)},
  MRNUMBER = {3300419},
MRREVIEWER = {Istv\'an\ Farag\'o},
       DOI = {10.1007/s00211-014-0638-9},
       URL = {https://doi.org/10.1007/s00211-014-0638-9},
}

@article {CHL03,
    AUTHOR = {Cohen, David and Hairer, Ernst and Lubich, Christian},
     TITLE = {Modulated {F}ourier expansions of highly oscillatory
              differential equations},
   JOURNAL = {Found. Comput. Math.},
  FJOURNAL = {Foundations of Computational Mathematics. The Journal of the
              Society for the Foundations of Computational Mathematics},
    VOLUME = {3},
      YEAR = {2003},
    NUMBER = {4},
     PAGES = {327--345},
      ISSN = {1615-3375,1615-3383},
   MRCLASS = {34C15 (34E13 37J40 82C05)},
  MRNUMBER = {2009682},
MRREVIEWER = {Iulian\ Coroian},
       DOI = {10.1007/s10208-002-0062-x},
       URL = {https://doi.org/10.1007/s10208-002-0062-x},
}

@article {ACHR24,
    AUTHOR = {Ammari, Habib and Cao, Jinghao and Hiltunen, Erik Orvehed and
              Rueff, Liora},
     TITLE = {Scattering from time-modulated subwavelength resonators},
   JOURNAL = {Proc. A.},
  FJOURNAL = {Proceedings A},
    VOLUME = {480},
      YEAR = {2024},
    NUMBER = {2289},
     PAGES = {Paper No. 20240177, 22},
      ISSN = {1364-5021,1471-2946},
   MRCLASS = {35L51 (35P25 74J20)},
  MRNUMBER = {4755815},
}

@article{HR25,
  title={Energy Balance and Optical Theorem for Time-Modulated Subwavelength Resonator Arrays},
  author={Hiltunen, Erik Orvehed and Rueff, Liora},
  journal={arXiv preprint arXiv:2507.11201},
  year={2025}
}

@article{HD24,
  title = {Coupled harmonics due to time-modulated point scatterers},
  author = {Hiltunen, E. O. and Davies, B.},
  journal = {Phys. Rev. B},
  volume = {110},
  issue = {18},
  pages = {184102},
  numpages = {8},
  year = {2024},
  month = {Nov},
  publisher = {American Physical Society},
  doi = {10.1103/PhysRevB.110.184102},
  url = {https://link.aps.org/doi/10.1103/PhysRevB.110.184102}
}

@book {BS22,
    AUTHOR = {Banjai, Lehel and Sayas, Francisco-Javier},
     TITLE = {Integral equation methods for evolutionary {PDE}---a
              convolution quadrature approach},
    SERIES = {Springer Series in Computational Mathematics},
    VOLUME = {59},
 PUBLISHER = {Springer, Cham},
      YEAR = {[2022] \copyright 2022},
     PAGES = {xix+268},
      ISBN = {978-3-031-13219-3; 978-3-031-13220-9},
   MRCLASS = {65-01 (35L05 35L20 45A05 65D32 65M38 65N38 65R20)},
  MRNUMBER = {4807209},
       DOI = {10.1007/978-3-031-13220-9},
       URL = {https://doi.org/10.1007/978-3-031-13220-9},
}

@article {GHL16,
    AUTHOR = {Gauckler, Ludwig and Hairer, Ernst and Lubich, Christian},
     TITLE = {Long-term analysis of semilinear wave equations with slowly
              varying wave speed},
   JOURNAL = {Comm. Partial Differential Equations},
  FJOURNAL = {Communications in Partial Differential Equations},
    VOLUME = {41},
      YEAR = {2016},
    NUMBER = {12},
     PAGES = {1934--1959},
      ISSN = {0360-5302,1532-4133},
   MRCLASS = {35L71 (35A01 35B25 35C20 35L20)},
  MRNUMBER = {3572564},
MRREVIEWER = {Joseph\ L.\ Shomberg},
       DOI = {10.1080/03605302.2016.1235581},
       URL = {https://doi.org/10.1080/03605302.2016.1235581},
}

@article{NHA24,
  title={Wave scattering with time-periodic coefficients: Energy estimates and harmonic formulations},
  author={Nick, J{\"o}rg and Hiptmair, Ralf and Ammari, Habib},
  journal={arXiv preprint arXiv:2410.10297},
  year={2024}
}

@article{DV25,
  title={Local and nonlocal homogenization of wave propagation in time-varying media},
  author={D{\"o}ding, Christian and Verf{\"u}rth, Barbara},
  journal={arXiv preprint arXiv:2505.21390},
  year={2025}
}

@book {K93,
    AUTHOR = {Kuchment, Peter},
     TITLE = {Floquet theory for partial differential equations},
    SERIES = {Operator Theory: Advances and Applications},
    VOLUME = {60},
 PUBLISHER = {Birkh\"auser Verlag, Basel},
      YEAR = {1993},
     PAGES = {xiv+350},
      ISBN = {3-7643-2901-7},
   MRCLASS = {35-02 (35C15 35P10 47N20)},
  MRNUMBER = {1232660},
MRREVIEWER = {Yehuda\ Pinchover},
       DOI = {10.1007/978-3-0348-8573-7},
       URL = {https://doi.org/10.1007/978-3-0348-8573-7},
}

@article{Y17,
  title = {Frequency conversion induced by time-space modulated media},
  author = {Yi, Kaijun and Collet, Manuel and Karkar, Sami},
  journal = {Phys. Rev. B},
  volume = {96},
  issue = {10},
  pages = {104110},
  numpages = {8},
  year = {2017},
  month = {Sep},
  publisher = {American Physical Society},
  doi = {10.1103/PhysRevB.96.104110},
  url = {https://link.aps.org/doi/10.1103/PhysRevB.96.104110}
}

@article{S19,
  title = {Nonreciprocal acoustic transmission in space-time modulated coupled resonators},
  author = {Shen, Chen and Zhu, Xiaohui and Li, Junfei and Cummer, Steven A.},
  journal = {Phys. Rev. B},
  volume = {100},
  issue = {5},
  pages = {054302},
  numpages = {6},
  year = {2019},
  month = {Aug},
  publisher = {American Physical Society},
  doi = {10.1103/PhysRevB.100.054302},
  url = {https://link.aps.org/doi/10.1103/PhysRevB.100.054302}
}

@article{A22,
  title = {Parametric Mie Resonances and Directional Amplification in Time-Modulated Scatterers},
  author = {Asadchy, V. and Lamprianidis, A.G. and Ptitcyn, G. and Albooyeh, M. and Rituraj and Karamanos, T. and Alaee, R. and Tretyakov, S.A. and Rockstuhl, C. and Fan, S.},
  journal = {Phys. Rev. Appl.},
  volume = {18},
  issue = {5},
  pages = {054065},
  numpages = {8},
  year = {2022},
  month = {Nov},
  publisher = {American Physical Society},
  doi = {10.1103/PhysRevApplied.18.054065},
  url = {https://link.aps.org/doi/10.1103/PhysRevApplied.18.054065}
}

@book {WYW13,
    AUTHOR = {Wu, Xinyuan and You, Xiong and Wang, Bin},
     TITLE = {Structure-preserving algorithms for oscillatory differential
              equations},
 PUBLISHER = {Springer, Heidelberg; Science Press Beijing, Beijing},
      YEAR = {2013},
     PAGES = {xii+236},
      ISBN = {978-3-642-35337-6; 978-3-642-35338-3; 978-7-03-035520-1},
   MRCLASS = {65-02 (65Lxx)},
  MRNUMBER = {3026657},
MRREVIEWER = {Martin\ Hermann},
       DOI = {10.1007/978-3-642-35338-3},
       URL = {https://doi.org/10.1007/978-3-642-35338-3},
}

@article {FA24,
    AUTHOR = {Feppon, F. and Ammari, H.},
     TITLE = {Subwavelength resonant acoustic scattering in fast
              time-modulated media},
   JOURNAL = {J. Math. Pures Appl. (9)},
  FJOURNAL = {Journal de Math\'ematiques Pures et Appliqu\'ees. Neuvi\`eme
              S\'erie},
    VOLUME = {187},
      YEAR = {2024},
     PAGES = {233--293},
      ISSN = {0021-7824,1776-3371},
   MRCLASS = {35B34 (35B10 35B40 35L05 45M05)},
  MRNUMBER = {4756020},
       DOI = {10.1016/j.matpur.2024.05.012},
       URL = {https://doi.org/10.1016/j.matpur.2024.05.012},
}
\end{document}